\newtheorem{thm}{Theorem}[section]
\newtheorem{prop}[thm]{Proposition}
\newtheorem{cor}[thm]{Corollary}
\newtheorem{lem}[thm]{Lemma}
\theoremstyle{definition}
\newtheorem{defn}[thm]{Definition}
\newtheorem{assertion}[thm]{Assertion}
\theoremstyle{remark}
\newtheorem{rem}[thm]{Remark}
\newtheorem{ex}[thm]{Example}
\newcommand{\K}{R}
\newcommand{\G}{{\mathcal G}}
\newcommand{\C}{{\mathcal C}}
\newcommand{\T}{{\mathcal T}}
\newcommand{\calK}{{\mathcal K}}
\newcommand{\e}{\varepsilon}
\newcommand{\D}{\mathcal{D}}
\newcommand{\mapright}[1]{%
\smash{\mathop{%
 \hbox to 1cm{\rightarrowfill}}\limits_{#1}}}
\newcommand{\maprightd}[2]{%
\smash{\mathop{%
 \hbox to 1.2cm{\rightarrowfill}}\limits^{#1}\limits_{#2}}}
\newcommand{\mapleft}[1]{%
\smash{\mathop{%
 \hbox to 1cm{\leftarrowfill}}\limits_{#1}}}
\newcommand{\mapleftu}[1]{%
\smash{\mathop{%
 \hbox to 0.8cm{\leftarrowfill}}\limits^{#1}}}
\newcommand{\maprightu}[1]{%
\smash{\mathop{%
 \hbox to 1cm{\rightarrowfill}}\limits^{#1}}}
\newcommand{\maprightud}[2]{%
\smash{\mathop{%
 \hbox to 1cm{\rightarrowfill}}\limits^{#1}_{#2}}}
\newcommand{\mapleftud}[2]{%
\smash{\mathop{%
 \hbox to 1cm{\leftarrowfill}}\limits^{#1}_{#2}}}
\newcounter{eqn}[section]
\def\theeqn{\textnormal{(\thesection.\arabic{eqn})}}
\def\eqnlabel#1{%
 \refstepcounter{eqn}%
 \label{#1}%
 \leqno{\theeqn}}
\begin{document}

\title{
On Mitchell's embedding theorem for a quasi-schemoid  \\
}

\footnote[0]{{\it 2010 Mathematics Subject Classification}: 05E30, 16D90, 18D35, 18E30  \\
{\it Key words and phrases.} 
Schemoid, functor category, model category, 
Morita equivalence. 



Department of Mathematical Sciences, 
Faculty of Science,  
Shinshu University,   
Matsumoto, Nagano 390-8621, Japan   
e-mail:{\tt kuri@math.shinshu-u.ac.jp} 

e-mail:{\tt momose@math.shinshu-u.ac.jp}
}

\author{Katsuhiko KURIBAYASHI and Yasuhiro MOMOSE}

\maketitle

\begin{abstract}
A quasi-schemoid is a small category whose morphisms are colored with appropriate combinatorial data. 
In this paper, Mitchell's embedding theorem for a {\it tame} schemoid is established. The result allows us to give 
a cofibrantly generated model category structure to the category of chain complexes over a functor category with 
a schemoid as the domain. 
Moreover, a notion of Morita equivalence for schemoids is introduced and discussed. 
In particular, we show that every Hamming scheme of binary codes is Morita equivalent to 
the association scheme arising from the cyclic group of order two. 
In an appendix, 
we construct a new schemoid from an abstract simplicial complex, whose Bose-Mesner algebra is closely related 
to the Stanley-Reisner ring of the given complex.  
\end{abstract}

\section{Introduction} There are two crucial categories for 
representation theory of small categories including groups and quivers. 
One is a module category and another one is a functor category. 
Mitchell's embedding theorem \cite[Theorem 7.1]{Mitchell} 
states that these categories are equivalent provided the small category, which we deal with, has finite many objects.  

Association schemes, ASs for short, are significant subjects in algebraic combinatorics; see \cite{B-I, D, Z_1995}.  
These subjects give rise to the so-called Bose-Mesner algebras (adjacency algebras) and 
the study of the algebras creates applications in the theory of codes and designs; see for example \cite{P-Z}. 
An important point is that the category of finite groups is embedded in the category of ASs
in the sense of Hanaki; see \cite{Z_book, H, F_1}. If an association scheme is thin (in the sense of \cite{Z_book}), 
then its Bose-Mesner algebra is just the group ring of the corresponding group. Thus, representation theory of ASs 
is developed in the module categories of their Bose-Mesner algebras. 
However, until today there is few study on ASs dealing with their categorical and homological structures such as group cohomology. 

Very recently, Matsuo and the first author \cite{K-M} have introduced the notion of {\it quasi-schemoids} 
generalizing that of ASs from a small categorical point of view.  Roughly speaking, 
a quasi-schemoid is a small category whose morphisms are colored with appropriate combinatorial data. 
In \cite{Momose}, Momose has considered 
representation theory for quasi-schemoids with module categories of their Bose-Mesner algebras. 
In this manuscript, we develop {\it another} representation theory, namely that based on an appropriate 
functor category with a quasi-schemoid as the domain.  
It is worthwhile to remark that the two categories for representation theory 
of schemoids are not equivalent in general even the set of objects 
in the underlying category of a given quasi-schemoid is finite.  That is, Mitchell's embedding theorem does not necessarily hold 
in our context; see Proposition \ref{prop:group_Hamming} and Remark \ref{rem:example_equivalences}. 

One of the aims of this manuscript is to give a class of  schemoids in which Mitchell's embedding theorem holds; 
see Theorem \ref{thm:Mitchell's_thm1}. Such schemoids are called {\it tame}. 
Our functor category for a schemoid is a subcategory, but not full, of the usual one 
for the underlying category. Therefore, the existence of left and right adjoints to a restriction functor is not immediate.  
We will also discuss this problem; see Theorem \ref{thm:adjoints}.  

An outline for the article is as follows. In Section 2, 
we describe our main theorems concerning Mitchell's embedding and   
adjoint functors on functor categories of schemoids. By employing the adjoint pair, we define 
a cofibrantly generated model category structure on the category of chain complexes over a functor category 
with a schemoid as the domain; see Theorem \ref{thm:model_category_str}. 
Moreover, schemoid cohomology of a morphism between 
schemoids and a notion of Morita equivalence of schemoids are introduced. 
In Section 3, after defining a tame schemoid explicitly, 
we prove our main theorems. 
Section 4 concerns examples of schemoid cohomology and a Morita equivalence. 
In particular, we shall show that every Hamming scheme of binary codes is Morita equivalent to the association scheme 
arising from the cyclic group of order two; see Proposition \ref{prop:group_Hamming}. 
Section 5 explores an invariant for Morita equivalence which is induced by a functor between underlying categories. 

In Appendix 1, we construct a new schemoid from an abstract simplicial complex, 
whose schemoid cohomology is investigated in Section 4.  
This subject is very interesting in its own right. In fact, we show that its Bose-Mesner algebra is closely related 
to the Stanley-Reisner ring of the given complex. In consequence, such algebras give a complete invariant 
for isomorphism classes of finite simplicial complexes; see Assertion \ref{assertion:simplicial_complexes}.  
Moreover, the category of open sets of a topological space, whose morphisms are inclusions, admits a schemoid structure. In 
consequence, we will see that a functor category of the schemoid is an abelian subcategory of the category of presheaves over the 
given space. 

\section{Main theorems}

In what follows, a quasi-schemoid \cite{K-M} is referred to as a schemoid. 
We begin by recalling the definition of a schemoid. 
Let $\C$ be a small category 
and  $S$ a partition of the set $mor(\C)$ of all morphisms in $\C$; that is, $mor(\C) =\coprod_{\sigma \in S}\sigma$.
The pair $(\C, S)$ is called a {\it schemoid} 
if the set $S$ satisfies the condition that for a triple $\sigma, \tau, \mu \in S$ 
and for any morphisms $f$, $g$ in $\mu$, as a set 
$$
(\pi_{\sigma\tau}^\mu)^{-1}(f) \cong (\pi_{\sigma\tau}^\mu)^{-1}(g), 
$$ 
where $\pi_{\sigma\tau}^\mu : \pi_{\sigma\tau}^{-1}(\mu) \to \mu$ denotes 
the restriction of the concatenation map 
$$\pi_{\sigma\tau} : \sigma \times_{ob(\C)}\tau:=\{(f, g) \in \sigma \times \tau \mid s(f) = t(g)\} \to mor(\C).$$
We call the cardinality $p_{\sigma\tau}^\mu$  of the set $(\pi_{\sigma\tau}^\mu)^{-1}(f)$ a {\it structure constant}. 
Thus it seems that a schemoid is a category all whose morphisms are colored according to the condition above 
on a partition of the set of morphisms. 
As is seen below,  such a condition plays an important role in constructing an algebra with a schemoid.    

Let $\C$ be a category with $mor (\C)$ finite and $R$ a commutative ring with unit.   
The underlying category defines an $\K$-free module $\K\C$ generated by all morphisms of $\C$. 
For generators $f$ and $g$, define the product of $f$ and $g$ by 
$$
gf = \begin{cases}
g \circ f  & \text{if $g$ and $f$ are composable}  \\
0 & \text{otherwise}. 
\end{cases}
$$
Then we have a $\K$-algebra $\K\C$ which is called the {\it category algebra} of $\C$.
Let $(\C, S)$ be a schemoid with $mor (\C)$ finite. 
For any $\sigma$ and $\tau$ in $S$, an equality 
$$
(\sum_{s\in \sigma} s) \cdot (\sum_{t\in \tau} t) = \sum_{\mu\in S} p_{\sigma\tau}^\mu (\sum_{u\in \mu} u)
$$ 
holds in the category algebra $\K \C$ of $\C$. Thus one has  
a subalgebra 
$
\K(\C, S)
$
of $\K\C$ generated by the elements $(\sum_{s\in \sigma} s)$ for all $\sigma \in S$. 
The subalgebra is referred to 
as the {\it Bose-Mesner algebra} of the schemoid $(\C, S)$. 

\begin{ex}\label{ex:1}
For a small category $\C$, we define a partition $S$ by $S=\{\{f\}\}_{f \in mor(\C)}$; that is, all morphisms have 
pairwise different colors. 
Then we see that $\calK(\C):=(\C, S)$ is a schemoid. 
It is called a {\it discrete schemoid}.  
Observe that the Bose-Mesner algebra is the category algebra of the underlying category $\C$. 
\end{ex}

We recall the definition of an association scheme. 
Let $X$ be a finite set and $S$ a partition of $X\times X$, namely a subset of the power set $2^{X\times X}$. 
Assume that the subset $1_X:=\{ (x, x) \mid x \in X\}$ and $g^*:=\{(y, x) \mid (x, y) \in g\}$ for each 
$g \in S$ are in $S$.  Then the pair $(X, S)$ is called an 
{\it association scheme} if for all 
$e, f, g \in S$, there exists an integer $p_{ef}^g$ such that for any $(x, z) \in g$ 
$$
p_{ef}^g=\sharp \{y \in X \mid (x, y)\in e \ \text{and} \ (y, z) \in f \}. 
$$
Observe that $p_{ef}^g$ is independent of the choice of $(x, z) \in g$. 

\begin{ex}\label{ex:2}
For an association scheme $(X, S)$, we define a quasi-schemoid $\jmath(X, S)$ by the pair $(\C, V)$ for which 
$ob(\C)=X$, $\text{Hom}_\C(y, x) =\{(x, y)\} \subset X\times X$ and $V =S$, where the composite of morphisms 
$(z, x)$ and $(x, y)$ is defined by $(z, x) \circ (x, y) = (z, y)$.  It follows that the Bose-Mesner algebra is nothing but the original 
adjacency algebra of the association scheme; see \cite[Example 2.6 (i)]{K-M}.
\end{ex}

We refer the reader to \cite[Section 2]{K-M} for more examples of schemoids.  

Let $(\C, S)$ and $(\D, S')$ be schemoids. Then a functor $u : \C \to \D$ between underlying categories is called a 
{\it morphism of schemoids}, denoted $u :  (\C, S) \to (\D, S')$, if for $\sigma \in S$, there exists an element $\tau \in S'$ 
such that $u(\sigma) \subset \tau$. Observe that such an element $\tau$ is determined uniquely because $S'$ is a partition of 
$mor(\D)$. We denote by $q\mathsf{ASmd}$ the category of schemoids. 

Let $R\text{-Mod}$ be the category of modules over a commutative ring $\K$ with unit. 
Though the module category is not small, we regard it as a discrete schemoid, denoted $\T$, 
whose morphisms have distinct colors. 
For morphisms $f$ and $g$ in a schemoid $(\C, S)$, 
we say that  $f$ is {\it equivalent} to $g$, denoted $f \sim_S g$, if $f$ and $g$ are contained in a common set 
$\sigma \in S$. 
For morphisms  $u, v : (\C,S) \to \T$ of schemoids, a natural transformation 
$\eta : u   \Rightarrow  v$ is called {\it locally constant} if $\eta_x = \eta_y$ whenever $id_x \sim_S id_y$.  

We define $\T^{(\C,S)}$ to be a category whose objects are morphisms of schemoids from $(\C,S)$ to $\T$ and 
whose morphisms are locally constant natural transformations. Observe that $\T^{(\C,S)}$ is an abelian 
subcategory of the functor category $\T^{\C}$, but not full in general. 

In the category $\mathsf{Cat}$ of small categories, natural transformations give a notion of homotopy between functors. 
By employing the notion, a homotopy relation in the category $q\mathsf{ASmd}$ is defined in \cite{K}. We here recall it. 
Let $I$ be the discrete schemoid with objects $0$ and $1$ whose only non-trivial morphism is an arrow 
$u : 0 \to 1$.

\begin{defn}\label{defn:Homotopy}  
Let $F, G : (\C, S) \to (\D, S')$ be morphisms between the schemoids $(\C, S)$ and  $(\D, S')$ in $q\mathsf{ASmd}$. 
We write $H: F \Rightarrow G$ if $H$ is a morphism from  $(\C, S) \times I$ to $(\D, S')$ in  $q\mathsf{ASmd}$ with  
$H\circ \e_0 = F$ and $H\circ \e_1 = G$. Here $(\C, S) \times I$ denotes the product of the quasi-schemoids 
and 
$\e_i : (\C, S) \to (\C, S) \times I$ is the morphism of quasi-schemoids defined by 
$\e_i(a) = (a, i)$ for an object $a$ in $\C$ and $\e_i(f)= (f,  1_i)$ for a morphism $f$ in $\C$.  
We call the morphism $H$ above a {\it homotopy} from $F$ to $G$. 
A morphism $F$ is {\it equivalent} to $G$, 
denoted $F\sim G$, if there exists a homotopy from $F$ to $G$ or that from $G$ to $F$. 
\end{defn}

If there exists a homotopy $H : (\C, S) \times I \to \T$ from functors 
$F$ to $G$,  then we have a commutative diagram 
$$
\xymatrix@C35pt@R20pt{
{H(x, 0)}  \ar@{->}[r]^{H(id_{x}, u)} \ar[dr]^{H(\varphi,u)}   \ar@{->}[d]_{F(\varphi)=H(\varphi, 1_0)}  &  {H(x, 1)} 
\ar@{->}[d]^{H(\varphi, 1_1)=G(\varphi)}\\
{H(y, 0)} \ar@{->}[r]_{H(id_{y}, u)}  &  {H(y, 1)}
}
$$
in the underlying category $\T$  for a morphism $\varphi : x \to y$. Suppose that $id_x \sim_S id_y$, then 
$H(id_x, u) = H(id_y, u)$ because the homotopy $H$ preserves the partition $S$. 
Thus the definition of the morphism in the functor category $\T^{(\C,S)}$ is natural in our context.

We will introduce a class of schemoids which are called {\it tame} in the next section. 
A discrete schemoid and a schemoid associated with a groupoid are tame; 
see Proposition \ref{prop:groupoids}. 
Mitchell's embedding theorem for tame schemoids is established.

\begin{thm} {\em (See Theorem \ref{thm:Mitchell's_thm} for a more precise version.)} \label{thm:Mitchell's_thm1} 
Let $(\C, S)$ be a tame schemoid. Then 
the functor category $\T^{(\C, S)}$
is equivalent to the module category $\K(\C, S)\text{\em -Mod}$ if the set $\{id_x\}_{x \in ob \C}/\sim_S$ is finite and 
every structure constant is less than or equal to $1$. 
\end{thm}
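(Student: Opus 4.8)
The plan is to build, by hand, a pair of quasi-inverse functors between $\T^{(\C,S)}$ and $\K(\C,S)\text{-Mod}$ in the spirit of the classical argument for \cite[Theorem 7.1]{Mitchell}, but organized around the equivalence classes of identities rather than the individual objects. Write $[x]$ for the class of $x$ under $id_x\sim_S id_y$, and abbreviate $\sigma^{+}=\sum_{s\in\sigma}s$. The first observation is that a morphism of schemoids $F\colon(\C,S)\to\T$ is automatically constant on classes: since $\T$ is discrete, $F(\sigma)$ is a single morphism, so $f\sim_S g$ forces $F(f)=F(g)$, and in particular $id_x\sim_S id_y$ forces $F(x)=F(y)$. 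Thus $F$ factors through the classes, and I may write $F([x])$ unambiguously. Because $\{id_x\}/\!\sim_S$ is finite, the elements $e_{[x]}=\sum_{id_y\sim_S id_x}id_y$ form a finite complete set of orthogonal idempotents in $\K(\C,S)$; that they lie in $\K(\C,S)$ and sum to the unit of $\K\C$ is the first point at which tameness is used, as it is what guarantees that a color meeting the identities consists of identities only.

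I would then define $\Phi\colon\T^{(\C,S)}\to\K(\C,S)\text{-Mod}$ on objects by $\Phi(F)=\bigoplus_{[x]}F([x])$, a finite direct sum on which $e_{[x]}$ acts as the projection onto the $[x]$-summand, and on which the generator $\sigma^{+}$ acts through the operator $A_\sigma$ whose $([y],[x])$-block is $F(f)$ for \emph{any} representative $f\in\sigma$ running from the class $[x]$ to the class $[y]$, and $0$ if no such morphism exists. Constancy of $F$ on colors makes this block independent of the chosen representative; it is essential here that one records a single value $F(f)$ and does \emph{not} weight it by the number of morphisms of $\sigma$ between the two classes, since otherwise the relation $(\sigma^{+})^2=\cdots$ already fails on examples such as the schemoid attached to the two-object contractible groupoid. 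The content of the construction is the verification that the assignment $\sigma^{+}\mapsto A_\sigma$ extends to an algebra homomorphism $\K(\C,S)\to\text{End}(\Phi(F))$, i.e. that $A_\sigma A_\tau=\sum_\mu p_{\sigma\tau}^\mu A_\mu$ matches the structure relation in $\K(\C,S)$.

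Reversing this, I would send a module $M$ to the functor $\Psi(M)$ with $\Psi(M)(x)=e_{[x]}M$ and, for $f\colon x\to y$ in the color $\sigma$, with $\Psi(M)(f)=e_{[y]}\,\sigma^{+}\,e_{[x]}\colon e_{[x]}M\to e_{[y]}M$; as this depends only on $\sigma$ and on the classes $[x],[y]$, the functor $\Psi(M)$ is a bona fide morphism of schemoids into $\T$. Granting that both assignments are well defined, the two functors are quasi-inverse almost formally: $\Psi\Phi(F)\cong F$ because the $([y],[x])$-block of $A_\sigma$ reads back exactly $F(f)$ --- and here the representative convention, rather than a multiplicity-weighted sum, is precisely what makes this an equality and not a proper multiple --- while $\Phi\Psi(M)\cong M$ because the $e_{[x]}$ are complete and orthogonal, so that $M=\bigoplus_{[x]}e_{[x]}M$ and $\sigma^{+}=\sum_{[x],[y]}e_{[y]}\sigma^{+}e_{[x]}$. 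Matching the Hom-sets is then routine: a locally constant natural transformation has components depending only on $[x]$, hence assembles into an $e_{[x]}$-graded map $\bigoplus_{[x]}\eta_{[x]}$, and its naturality is equivalent to commutation with every $A_\sigma$, i.e. to $\K(\C,S)$-linearity; conversely every module homomorphism is graded by the idempotents and produces a locally constant transformation.

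The main obstacle is exactly the well-definedness left open above, namely the identity $A_\sigma A_\tau=\sum_\mu p_{\sigma\tau}^\mu A_\mu$ together with the functoriality of $\Psi$, and both reduce to the same delicate point about composites. A block of $A_\sigma A_\tau$ is a sum over intermediate classes $[y]$ of products $F(h)F(g)$ with $g\in\tau$ from $[x]$ to $[y]$ and $h\in\sigma$ from $[y]$ to $[z]$; the chosen representatives $h,g$ need not be composable in $\C$, and one must use constancy of $F$ to replace them by a genuinely composable pair \emph{inside the same class} before identifying the product with $F$ of a single morphism of the appropriate color $\mu$. Making this legitimate requires that each relevant object-class be homogeneous enough to supply such composable representatives and that the intermediate class through which a composite factors be determined by the colors involved; this homogeneity is the defining feature of tameness, while the hypothesis that every structure constant is at most $1$ supplies the uniqueness of factorizations that makes the two sides agree term by term. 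It is the failure of this homogeneity --- several morphisms of one color sharing a pair of classes, with mismatched composites --- that underlies the breakdown of the theorem for Hamming schemes recorded in Proposition \ref{prop:group_Hamming}.
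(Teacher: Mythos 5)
Your argument is correct and is in substance the paper's own proof of Theorem \ref{thm:Mitchell's_thm}: the paper packages the identical construction into three modular steps, namely the isomorphism $\T^{(\C,S)}\cong\T^{[\C]}$ of Lemma \ref{lem:isomorphisms} (where $[\C]$ has the classes $[x]$ as objects and the colors $\sigma$ as morphisms, composition being well defined by T(iii)), the classical Mitchell correspondence $F\mapsto\bigoplus_{[x]}F([x])$ for the finite category $[\C]$ as in Remark \ref{Mitchell's_correspondence}, and the identification of the category algebra $\K[\C]$ with the Bose--Mesner algebra $\K(\C,S)$ forced by all structure constants being at most $1$. Your $\Phi$, $\Psi$, the idempotents $e_{[x]}$ and the operators $A_\sigma$ are exactly the composite of these steps written out by hand, and the ``delicate point about composites'' you isolate is precisely the associativity of $[\C]$ established in the paper's first lemma of Section 3 together with that algebra identification.
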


In general, the functor category $\T^{(\C, S)}$ is {\it not} equivalent to the module category $\K(\C, S)\text{-Mod}$;
see Remark \ref{rem:example_equivalences} (i). In this manuscript, we mainly focus our attention on the functor categories of 
schemoids. Here a notion of Morita equivalence for schemoids is proposed. 

\begin{defn}
Schemoids $(\C, S_{\C})$ and $(\C', S_{\C'})$ are {\it Morita equivalent} if the functor categories 
$\T^{(\C, S_{\C})}$ and $T^{(\C', S_{\C'})}$ are equivalent as abelian categories.  
\end{defn}

This gives a new equivalence relation in the category $q\mathsf{ASmd}$. 
Surprisingly, there exist a Hamming scheme and a group-case association scheme which are Morita equivalent 
while their Bose-Mesner algebras are not Morita equivalent; see Proposition \ref{prop:group_Hamming} and 
Remark \ref{rem:example_equivalences}.

Theorem \ref{thm:Mitchell's_thm1} enables us to investigate tame schemoids with tools in the study of the module categories, for example derived functors. Thus in considering more general schemoids, one might expect a morphism between 
the given schemoid and a tame one. Furthermore, a restriction functor and its adjoint between functor categories of schemoids 
will be of great use in the study of schemoids.   

\begin{thm} \label{thm:adjoints} 
Let $(\D, S')$ be a tame schemoid and $u : (\C, S) \to (\D, S')$ a morphism of schemoids. 
Then the functor $u^* : \T^{(\D, S')} \to \T^{(\C, S)}$ induced by $u$ has a left adjoint 
$\text{\em Lan}_u : \T^{(\C, S)} \to \T^{(\D, S')}$ 
and a right adjoint $\text{\em Ran}_u : \T^{(\C, S)} \to \T^{(\D, S')}$.  
\end{thm}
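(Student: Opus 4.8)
The plan is to obtain $\mathrm{Lan}_u$ and $\mathrm{Ran}_u$ by restricting the classical Kan extension adjoints to the subcategories in question. Since $\K\text{-Mod}$ is both complete and cocomplete and $\C,\D$ are small, the restriction functor $\T^\D \to \T^\C$ between the \emph{full} functor categories, also denoted $u^*$, admits a left adjoint given by the pointwise left Kan extension, $(\mathrm{Lan}_u F)(d)=\varinjlim_{(u\downarrow d)} F$, and a right adjoint given by the pointwise right Kan extension, $(\mathrm{Ran}_u F)(d)=\varprojlim_{(d\downarrow u)} F$, where $(u\downarrow d)$ and $(d\downarrow u)$ are the usual comma categories. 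The whole problem therefore reduces to checking that this adjunction descends to the non-full subcategories $\T^{(\D,S')}\subset \T^\D$ and $\T^{(\C,S)}\subset \T^\C$. As a preliminary I would record that $u^*$ does carry $\T^{(\D,S')}$ into $\T^{(\C,S)}$, which is what makes the statement meaningful: if $G$ is a morphism of schemoids and $\sigma\in S$, then $u(\sigma)\subset\tau$ for some $\tau\in S'$, so $G$ is constant on $u(\sigma)$ and hence $G\circ u$ is constant on $\sigma$; likewise $id_x\sim_S id_y$ forces $u(id_x)$ and $u(id_y)$ into a common color of $S'$, so $u^*$ preserves local constancy of natural transformations. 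This step is purely formal and uses nothing about tameness.

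The essential point is that $\mathrm{Lan}_u F$ and $\mathrm{Ran}_u F$ again lie in $\T^{(\D,S')}$ whenever $F\in\T^{(\C,S)}$, and it is here that tameness of $(\D,S')$ must be invoked. Concretely, I must show that $\mathrm{Lan}_u F$ is constant on each color class of $S'$: for $f\sim_{S'} g$ the induced maps between the relevant colimits must agree, and for $id_d\sim_{S'} id_{d'}$ the values $\varinjlim_{(u\downarrow d)}F$ and $\varinjlim_{(u\downarrow d')}F$ must coincide. I expect tameness to furnish, from the partition data of $(\D,S')$, color-preserving isomorphisms of comma categories $(u\downarrow d)\cong(u\downarrow d')$ when $id_d\sim_{S'}id_{d'}$, together with a compatible identification when $f\sim_{S'}g$; since $F$ is already constant on the colors of $S$, the diagrams indexed by these comma categories are carried onto one another, and the colimits (respectively the limits, for $\mathrm{Ran}_u$, using $(d\downarrow u)$) are forced to agree. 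Thus the restricted functors land in the correct subcategory.

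Finally, because these subcategories are not full, I must verify that the adjunction bijections themselves restrict. For $F\in\T^{(\C,S)}$ and $G\in\T^{(\D,S')}$ the classical isomorphism $\operatorname{Hom}_{\T^\D}(\mathrm{Lan}_u F,G)\cong \operatorname{Hom}_{\T^\C}(F,u^*G)$ has to be shown to carry locally constant natural transformations to locally constant ones in both directions. Expressing the bijection through the unit and counit, which are assembled from the canonical structure maps of the (co)limits, local constancy of a transformation on one side should translate to the other through the very same color-respecting identifications produced in the previous step. Granting this, $\mathrm{Lan}_u\dashv u^*\dashv \mathrm{Ran}_u$ restrict to the asserted adjoint pair.

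The main obstacle is the crux step: proving that left and right Kan extensions preserve the property of being constant on color classes. This is exactly where the definition of \emph{tame} (given in the next section) enters, and I anticipate that it will reduce to constructing natural, color-respecting isomorphisms among the comma categories $(u\downarrow d)$ as $d$ ranges over a single color of $S'$, under which the pulled-back diagrams correspond. The non-fullness issue in the Hom-set comparison is a secondary technical point that I expect to follow from the same identifications.
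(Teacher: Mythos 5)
Your overall strategy---form the classical pointwise Kan extensions along $u$ itself and then check that they descend to the non-full subcategories---is not the paper's route, and the crux step on which it rests is false: tameness of $(\D,S')$ does \emph{not} make $\mathrm{Lan}_uF$ (or $\mathrm{Ran}_uF$) constant on the colour classes of $S'$, and it does not furnish the isomorphisms of comma categories $(u\downarrow d)\cong(u\downarrow d')$ that you anticipate. Here is a concrete counterexample. Let $\D$ have objects $x,d,d'$ and, besides identities, exactly two morphisms $f_1,f_2:x\to d$ and one morphism $g:x\to d'$, with partition $S'=\{\{id_x\},\{id_d,id_{d'}\},\{f_1,f_2,g\}\}$. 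Each of $f_1,f_2,g$ decomposes in exactly one way through each of the two identity classes and in no other way, so all structure constants are well defined and $(\D,S')$ is a schemoid; it is unital and T(iii) holds trivially, so it is tame. Take $(\C,S)$ to be the one-object discrete schemoid, $u(\ast)=x$, and $F(\ast)=M\neq 0$. Then the pointwise left Kan extension along $u$ gives $(\mathrm{Lan}_uF)(d)=\D(x,d)\cdot M\cong M^{\oplus 2}$ but $(\mathrm{Lan}_uF)(d')\cong M$, even though $id_d\sim_{S'}id_{d'}$: the two comma categories $(u\downarrow d)$ and $(u\downarrow d')$ have different cardinalities, and $\mathrm{Lan}_uF$ fails to lie in $\T^{(\D,S')}$ even up to isomorphism. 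Reversing the arrows produces the same failure for $\mathrm{Ran}_u$. A secondary problem, which you defer but which would also bite, is that even when the Kan extension happens to land in the subcategory you would still need $\T^{(\D,S')}(\mathrm{Lan}_uF,G)=\T^{\D}(\mathrm{Lan}_uF,G)$; nothing in your outline (nor in the paper's Lemma \ref{lem:coincides}, which only covers objects in the image of $u^*$) supplies this, since $\mathrm{Lan}_uF$ need not be a restricted module.

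The paper avoids both problems by never forming Kan extensions along $u$. Tameness is used to build the quotient category $[\D]$, whose objects are the classes $[d]$ and whose morphisms are the colours themselves, together with an isomorphism of categories $\T^{(\D,S')}\cong\T^{[\D]}$ (Lemma \ref{lem:isomorphisms}). The adjoints are then the classical Kan extensions along the composite $\pi\circ u:\C\to[\D]$, transported back through this isomorphism, i.e.\ $\mathrm{Ran}_u:=\Psi\,\mathrm{Ran}_{\pi\circ u}$. Since $\T^{[\D]}$ is a genuine full functor category, no descent or fullness issue arises on that side, and the only remaining comparison, $\T^{(\C,S)}(u^*F,G)=\T^{\C}(u^*F,G)$, is exactly what Lemma \ref{lem:coincides} provides. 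Note that in the example above the paper's construction assigns $M$ to both $d$ and $d'$, which differs from the pointwise Kan extension along $u$; the two constructions are adjoint to $u^*$ with respect to different hom-sets, and only the former proves the theorem. To salvage your outline you would have to replace $(u\downarrow d)$ by comma categories over $[\D]$, at which point you have reproduced the paper's argument.
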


Theorem \ref{thm:adjoints} allows us to define an appropriate cohomology group of a schemoid over a tame one. In fact, 
if we have a morphism $u : (\C, S) \to (\D, S')$ to a tame schemoid, then   
we can send a module in $\T^{(\C, S)}$ to the enough projective abelian category 
$\T^{(\D, S')}$ with the adjoint functor mentioned above.  Thus homological algebra on $\T^{(\D, S')}$ can be applicable to 
the study of the functor category $\T^{(\C, S)}$. 

\begin{defn} Let $(\D, S')$ be a tame schemoid with the set $\{id_x\}_{x \in ob \D}/\sim_{S'}$ finite. 
For a morphism $u : (\C, S) \to (\D, S')$ of schemoids and a functor $M \in \T^{(\C, S)}$, 
{\it (relative) schemoid cohomology} of $u$ with coefficients in $M$ is defined by 
$$
H^*( (\C, S) \stackrel{u}{\to} (\D, S') ; M) = H^*(u ; M) := 
\text{Ext}_{\T^{(\D, S)}}^*(\underline{\K}, \text{Ran}_uM). 
$$
\end{defn} 

We remark that if $(\C, S)$ is a schemoid which comes from a group $G$, then schemoid cohomology of the identity morphism on 
the schemoid is just the group cohomology of $G$; see Corollary \ref{cor:original_one} for more details.   

Let $u : (\C, S) \to (\D, S')$ be a morphism of schemoids whose target is tame.  
The adjoint pairs in Theorem \ref{thm:adjoints} induces adjoints between the category of chain complexes 
over the functor categories:
$$
\xymatrix@C35pt@R25pt{
Ch(\T^{(\D, S')}) \ar[r]^-{u^*}
& Ch(\T^{(\C, S)})   \ar@/^6mm/[l]^-{\text{Ran}_{u}}_-{\perp} 
 \ar@/_6mm/[l]_-{\text{Lan}_{u}}^-{\perp} 
}
$$
Indeed, the objectwise assignment of the functors $u^*$, $\text{Ran}_{u}$ and $\text{Lan}_{u}$ 
gives rise to the adjoint functors on the categories of chain complexes. 

We recall the cofibrantly generated model category structure of a module category 
described in \cite[Theorem 2.3.11]{Hovey} for example.  
The result \cite[Theorem 11.9.2]{Hir} due to Kan enables us to give a model category structure to  
$Ch(\T^{(\C, S)})$ by using that of 
$Ch(\K[\D]\text{-Mod})$ and adjoints mentioned above. 

\begin{thm} \label{thm:model_category_str} With the above notation,  
suppose further that $ob([\D])$ is finite. Then there is a cofibrantly generated model category structure on $Ch(\T^{(\C, S)})$ in which 
the weak equivalences are the maps that $\text{\em Ran}_{u}$ takes into weak equivalences in 
$Ch(\T^{(\D, S')})\cong Ch(\T^{[\D]})\simeq Ch(\K[\D]\text{\em -Mod})$. Moreover, $(u^*, \text{\em Ran}_{u})$ is 
a Quillen pair with respect to this model category structure.  
\end{thm}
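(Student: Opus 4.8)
The plan is to obtain the asserted structure by right-transferring (``lifting'') a standard model structure on chain complexes of modules along the adjunction $u^* \dashv \mathrm{Ran}_{u}$, via Kan's theorem \cite[Theorem 11.9.2]{Hir} quoted above. Concretely, I would first equip $Ch(\K[\D]\text{-Mod})$ with the projective model structure of \cite[Theorem 2.3.11]{Hovey}, whose weak equivalences are the quasi-isomorphisms, whose fibrations are the degreewise epimorphisms, and which is cofibrantly generated with generating cofibrations $I=\{S^{n-1}\to D^{n}\}$ and generating trivial cofibrations $J=\{0\to D^{n}\}$, the disk complexes on the free module $\K[\D]$. Here the hypothesis that $ob([\D])$ is finite is exactly what makes the category algebra $\K[\D]$ a ring with unit, so that this module-theoretic structure is available; transporting it across the equivalences $Ch(\K[\D]\text{-Mod})\simeq Ch(\T^{[\D]})\cong Ch(\T^{(\D, S')})$ endows $Ch(\T^{(\D, S')})$ with a cofibrantly generated model structure. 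I would then declare $u^{*}I$ and $u^{*}J$ to be the generating (trivial) cofibrations on $Ch(\T^{(\C, S)})$ and check the hypotheses of the transfer theorem: (a) $Ch(\T^{(\C, S)})$ is bicomplete and the sets $u^{*}I$, $u^{*}J$ permit the small object argument; and (b) the acyclicity condition, namely that $\mathrm{Ran}_{u}$ carries every relative $u^{*}J$-cell complex to a weak equivalence.

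The acyclicity condition (b) is usually the crux of such transfers, but here it is immediate from additivity. Since every map in $J$ has the zero complex as its domain, a relative $u^{*}J$-cell complex is, up to isomorphism, a summand inclusion $A\hookrightarrow A\oplus C$ in which $C=\bigoplus_{\alpha}u^{*}(D^{n_{\alpha}})$ is a direct sum of copies of $u^{*}$ applied to disk complexes. Each $D^{n}$ is chain contractible, and $u^{*}$ is additive, so it sends the nullhomotopy of $\mathrm{id}_{D^{n}}$ to one for $\mathrm{id}_{u^{*}D^{n}}$; hence each $u^{*}D^{n}$, and therefore the direct sum $C$, is contractible. Consequently $A\hookrightarrow A\oplus C$ is a chain homotopy equivalence, with homotopy inverse the projection. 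Because $\mathrm{Ran}_{u}$ is a right adjoint between additive categories it is additive, so it preserves chain homotopies and thus chain homotopy equivalences; therefore $\mathrm{Ran}_{u}(A\hookrightarrow A\oplus C)$ is a chain homotopy equivalence, in particular a quasi-isomorphism, i.e.\ a weak equivalence in $Ch(\T^{(\D, S')})$. Note this uses nothing about $\mathrm{Ran}_{u}$ beyond additivity, and in particular does not require it to commute with the direct sum $C$.

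For (a) I would argue that $\T^{(\C, S)}$ is a Grothendieck abelian category (bicomplete, with exact filtered colimits and a generator), hence locally presentable, so that $Ch(\T^{(\C, S)})$ is locally presentable and every object—in particular the domains of $u^{*}I$ and $u^{*}J$—is small, which lets the small object argument run. With both hypotheses in hand, \cite[Theorem 11.9.2]{Hir} yields the cofibrantly generated model structure on $Ch(\T^{(\C, S)})$ whose weak equivalences and fibrations are created by $\mathrm{Ran}_{u}$, which is precisely the asserted structure. The final clause is then formal: by construction $u^{*}$ carries $I$ and $J$ into $u^{*}I$ and $u^{*}J$, and—being a left adjoint, hence colimit-preserving—it carries relative $I$-cell complexes to relative $u^{*}I$-cell complexes, so $u^{*}$ preserves cofibrations and trivial cofibrations and is left Quillen; thus $(u^{*},\mathrm{Ran}_{u})$ is a Quillen pair. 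The main obstacle I anticipate is exactly the bookkeeping behind (a): because $\T^{(\C, S)}$ is \emph{not} a full subcategory of the ambient functor category $\T^{\C}$, its limits, colimits, and a small generating set must be controlled intrinsically rather than simply inherited objectwise, and it is this verification of bicompleteness and smallness—rather than the acyclicity—that demands the most care.
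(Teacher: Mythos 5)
Your proposal follows essentially the same route as the paper: transfer the projective model structure on $Ch(\K[\D]\text{-Mod})$ along $(u^*,\mathrm{Ran}_u)$ via Kan's lifting theorem, with acyclicity reduced to the contractibility of the disk complexes $u^*D^n$ and the additivity of $\mathrm{Ran}_u$. Your packaging of the acyclicity step as a chain homotopy equivalence preserved by any additive functor is a slightly cleaner phrasing of the same argument, but not a different proof.
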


Thus we obtain a Hochschild cohomology type invariant for Morita equivalence; see Theorem \ref{thm:Ext}. 

It seems that our proof of Theorem \ref{thm:adjoints} is not applicable to showing the existence of the left/right adjoint to 
the restriction functor of a morphism of schemoids {\it from} tame one; see Remark \ref{rem:inverse}. Then the choice of a morphism of schemoids from $(\C, S)$ {\it to} 
a tame schemoid is relevant to the consideration of a model category structure on $\T^{(\C, S)}$.  

Let $X$ and $Y$ be objects in an abelian category $\mathcal{A}$. Suppose that the category $Ch(\mathcal{A})$ of chain complexes 
over $\mathcal{A}$ has a model category structure. 
Then we recall the Ext group of $X$ by $Y$ which is defined by
$
\text{Ext}_{\mathcal{A}}^n(X, Y) := \text{Hom}_{\text{D}(\mathcal{A})}(X, Y[n]),
$ 
where $\text{D}(\mathcal{A})$ denotes the derived category of ${\mathcal A}$, namely the homotopy category 
$\text{Ho}(Ch(\mathcal{A}))$ of $Ch(\mathcal{A})$.   

\begin{rem}\label{rem:the_usual_Ext}
Let $\mathcal{A}$ be the category of (unbounded) chain complexes of left $R$-modules, where $R$ is a ring. 
When we consider the projective model structure on $Ch(\mathcal{A})$; see \cite[Section 2.3]{Hovey}, the Ext group 
$\text{Ext}_{\mathcal{A}}^*(X, Y)$ for $R$-modules $X$ and $Y$ is the usual one.   
\end{rem}

\begin{cor}\label{cor:cohomologies}
With the model category structure on $Ch(\T^{(\C, S)})$ defined in Theorem \ref{thm:model_category_str}, one has a natural isomorphism 
$$
H^*( (\C, S) \stackrel{u}{\to} (\D, S') ; M):=\text{\em Ext}_{\T^{(\D, S')}}^*(\underline{\K}, \text{\em Ran}_uM)
 \cong \text{\em Ext}_{\T^{(\C, S)}}^*(({\mathbb L}u^*) \underline{\K}, M)
$$
for every object $M$ in $\T^{(\C, S)}$, where ${\mathbb L}u^*$ denotes the total derived functor of the restriction $u^*$. 
\end{cor}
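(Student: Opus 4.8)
The plan is to establish the isomorphism as an adjunction statement in the derived category, deduced from the Quillen adjunction provided by Theorem \ref{thm:model_category_str}. Recall that the Ext groups in an abelian category $\mathcal{A}$ whose chain complex category carries a model structure are defined by $\text{Ext}_{\mathcal{A}}^n(X, Y) = \text{Hom}_{\DD(\mathcal{A})}(X, Y[n])$. Thus both sides of the desired isomorphism are morphism groups in homotopy categories, and the natural strategy is to produce a derived adjunction between the homotopy categories $\text{Ho}(Ch(\T^{(\D, S')}))$ and $\text{Ho}(Ch(\T^{(\C, S)}))$ and then evaluate it on the appropriate objects.

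First I would recall that, by Theorem \ref{thm:model_category_str}, the pair $(u^*, \text{Ran}_u)$ is a Quillen pair with respect to the stated model category structure on $Ch(\T^{(\C, S)})$. A standard consequence of the theory of model categories is that a Quillen pair descends to an adjunction of total derived functors between the homotopy categories; here the left derived functor ${\mathbb L}u^*$ of $u^*$ is left adjoint to the right derived functor ${\mathbb R}\text{Ran}_u$ of $\text{Ran}_u$. Applying this derived adjunction to the objects $\underline{\K}$ and $M[n]$ yields, for every integer $n$, a natural isomorphism
$$
\text{Hom}_{\DD(\T^{(\D, S')})}(\underline{\K}, ({\mathbb R}\text{Ran}_u M)[n]) \cong \text{Hom}_{\DD(\T^{(\C, S)})}(({\mathbb L}u^*)\underline{\K}, M[n]).
$$
The right-hand side is by definition $\text{Ext}_{\T^{(\C, S)}}^n(({\mathbb L}u^*)\underline{\K}, M)$, which matches the target of the claimed isomorphism.

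It then remains to identify the left-hand side with the relative schemoid cohomology $H^*(u; M) = \text{Ext}_{\T^{(\D, S')}}^*(\underline{\K}, \text{Ran}_u M)$. The point is that the weak equivalences in the model structure on $Ch(\T^{(\C, S)})$ are precisely the maps that $\text{Ran}_u$ carries to weak equivalences; under the equivalence $Ch(\T^{(\D, S')}) \simeq Ch(\K[\D]\text{-Mod})$, the target category is the enough-projective module category, where $\text{Ran}_u$ and its total right derived functor agree on the relevant objects. Concretely, I would argue that $\text{Ran}_u M$ already represents ${\mathbb R}\text{Ran}_u M$ in the derived category of $\T^{(\D, S')}$, so that $\text{Hom}_{\DD(\T^{(\D, S')})}(\underline{\K}, ({\mathbb R}\text{Ran}_u M)[n])$ computes the ordinary $\text{Ext}^n$ group in $\T^{(\D, S')}$, thereby recovering the definition of $H^*(u; M)$.

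The main obstacle I anticipate is this last identification, namely verifying that no extra derived correction term arises when passing from $\text{Ran}_u M$ to ${\mathbb R}\text{Ran}_u M$. This requires checking that the functor $\text{Ran}_u$ behaves well enough with respect to fibrant replacement, or equivalently that the relevant objects are suitably fibrant or that $\text{Ran}_u$ preserves the weak equivalences needed to compute the derived functor without replacement. Because the model structure on $Ch(\T^{(\C, S)})$ was \emph{transferred} along $\text{Ran}_u$ via Kan's theorem \cite[Theorem 11.9.2]{Hir}, the fibrations and weak equivalences are detected by $\text{Ran}_u$, which is exactly what makes this compatibility plausible; still, the naturality of the isomorphism in $M$ and the bookkeeping identifying $\text{Ext}$ computed in $\T^{(\D, S')}$ with morphisms in its derived category will require care rather than a single formal citation.
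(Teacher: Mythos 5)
Your proposal follows essentially the same route as the paper: the paper likewise invokes the derived adjunction attached to the Quillen pair $(u^*, \text{Ran}_u)$ (citing Hirschhorn, Theorem 8.5.18) and then disposes of the "derived correction" you flag by observing that every object $M$ of $\T^{(\C, S)}$ is fibrant in the transferred structure — since $M \to 0$ has the right lifting property with respect to the generating trivial cofibrations $u^*J$ — whence ${\mathbb R}\text{Ran}_u M = \text{Ran}_u M$. The obstacle you correctly anticipated at the end is resolved exactly by this fibrancy check, so your argument is sound and matches the paper's.
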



\begin{cor}\label{cor:original_one}
\text{\em (i)} For a group $G$ and a $\K G$-module $M$, the schemoid cohomology   
$H^*( S(G) \stackrel{id}{\to} S(G) ; M)$ is isomorphic to the group cohomology $H^*(G, M)$. \\
\text{\em (ii)} Let $\C$ be a small category with the set of morphism $mor(\C)$ finite. 
Then one has an isomorphism $H^*(\calK (\C) \stackrel{id}{\to} \calK(\C) ; M) \cong H^*(\C, M)$ for any $\K\C$-module 
$M$, where $H^*(\C, M)$ denotes the cohomology of $\C$ with coefficients in $M$; see \cite{B-W} for example. 
\end{cor}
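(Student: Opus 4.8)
The plan is to prove Corollary \ref{cor:original_one} as a specialization of Corollary \ref{cor:cohomologies} together with a direct identification of the relevant Ext groups. The key observation is that in both cases the morphism $u$ is the identity, so $\text{Ran}_u = \text{id}$ and $u^* = \text{id}$, whence Corollary \ref{cor:cohomologies} reduces the schemoid cohomology $H^*(u; M)$ to $\text{Ext}_{\T^{(\C,S)}}^*(\underline{\K}, M)$ computed in the functor category itself. The task then becomes identifying this Ext group with the classical (co)homology in each case.

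For part (ii) I would first invoke Theorem \ref{thm:Mitchell's_thm1}: since $\calK(\C)$ is the discrete schemoid, every structure constant equals $1$ and $\{id_x\}/\sim_S$ is just $ob(\C)$, which is finite because $mor(\C)$ is finite; a discrete schemoid is tame by the remark preceding Theorem \ref{thm:Mitchell's_thm1}. Hence the functor category $\T^{\calK(\C)} = \T^{(\C,S)}$ is equivalent to $\K(\C,S)\text{-Mod}$, which by Example \ref{ex:1} is exactly the category algebra $\K\C$-Mod, i.e. the usual category of modules over $\C$. Under this equivalence the constant functor $\underline{\K}$ corresponds to the trivial module $\K$, so $\text{Ext}_{\T^{(\C,S)}}^*(\underline{\K}, M)$ is carried to $\text{Ext}_{\K\C}^*(\K, M)$. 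The final step is to recognize this as $H^*(\C, M)$ by definition of the cohomology of a small category (as in \cite{B-W}), where the cohomology of $\C$ is precisely $\text{Ext}$ of the trivial module by $M$ over the category algebra. Part (i) is then the further specialization where $\C = S(G)$ arises from a group $G$: here the Bose-Mesner algebra is the group ring $\K G$, the trivial functor becomes the trivial $\K G$-module $\K$, and $\text{Ext}_{\K G}^*(\K, M) = H^*(G, M)$ is the standard definition of group cohomology.

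The main obstacle I anticipate is not the homological algebra but the careful bookkeeping of the several equivalences and the compatibility of the object $\underline{\K}$ across them. Specifically, I must verify that the equivalence of Theorem \ref{thm:Mitchell's_thm1} sends the constant functor $\underline{\K}$ (the functor assigning $\K$ to each object and the identity to each morphism, which lies in $\T^{(\C,S)}$) to the trivial module over $\K(\C,S)$, and that the Ext groups are preserved under an equivalence of abelian categories — the latter is automatic since equivalences induce isomorphisms on all derived functors, but the identification of distinguished objects requires attention. For part (i) one must additionally confirm that the schemoid $S(G)$ associated with a group has $\K(S(G)) \cong \K G$ and that the constant functor corresponds to the trivial representation; both follow from the thinness of group-case schemoids, but should be checked against the definitions in \cite{K-M}. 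Once these identifications are in place, the isomorphisms are natural in $M$ because they are induced by an equivalence of categories, completing the proof.
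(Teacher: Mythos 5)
Your proposal is correct and follows essentially the same route as the paper, whose entire proof is the observation that $S(G)$ and $\calK(\C)$ are tame so that Theorem \ref{thm:Mitchell's_thm} identifies $\T^{(\C,S)}$ with the module category over the group ring (resp.\ category algebra), after which the Ext groups are the classical ones. The only cosmetic difference is your initial appeal to Corollary \ref{cor:cohomologies}, which is not needed here since $u=\mathrm{id}$ gives $\text{Ran}_u M = M$ directly from the definition of schemoid cohomology.
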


As is seen below, even if a small category is equivalent to trivial one, 
the functor category in our context is not equivalent to $\T$ the trivial module category in general 
provided the small category admits a schemoid structure; see 
Example \ref{ex:N} and Remark \ref{rem:example_equivalences} again. 
Thus the functor categories of schemoids, which we deal with in this manuscript, 
are likely to provide new insights into categorical representation theory. 

\section{Tame schemoids}\label{Result}

We begin by recalling a schemoid arising from a groupoid. 
For a groupoid ${\mathcal H}$, we have 
a schemoid $\widetilde{S}({\mathcal H}) = (\widetilde{\mathcal H}, S)$, where $ob (\widetilde{\mathcal H}) = mor({\mathcal H})$ 
and 
$$
\text{Hom}_{\widetilde{{\mathcal H}}}(g, h) = \begin{cases}
\{(h, g)\}  & \text{if}  \ \  t(h) = t(g)  \\
\varnothing & \text{otherwise}. 
\end{cases}
$$
Here the partition $S=\{ \G_f \}_{f \in mor({\mathcal H})}$ is defined by $\G_f = \{(k, l) \mid k^{-1}l = f\}$. 
Observe that the composite of morphisms $(k, g)$ and $(g, f)$ in the category $\widetilde{\mathcal H}$ 
is defined by $(k, g)\circ (g, f) = (k, f)$. 

Let $\mathsf{Gpd}$ be the category of groupoids. 
Then we define functors $\widetilde{S}( \ ) : \mathsf{Gpd } \to q\mathsf{ASmd}$ and 
$\jmath : \mathsf{AS} \to q\mathsf{ASmd}$ 
by sending 
a groupoid ${\mathcal H}$ and an association scheme $(X, S)$ to 
$\widetilde{S}({\mathcal H})$ and $\jmath(X, S)$, respectively; see Example \ref{ex:2}.
One has a commutative diagram of categories 
$$
\xymatrix@C35pt@R20pt{
\mathsf{Gpd} \ar[r]^-{{\widetilde S}( \ )} & 
q\mathsf{ASmd} \ar@<1ex>[r]^-{U}_-{\top} 
& \mathsf{Cat}, \ar@<1ex>[l]^-{\calK}  \\
\mathsf{Gr} \ar[u]^\imath \ar[r]^-{S( \ )}  & \mathsf{AS} \ar[u]_{\jmath} 
}
\eqnlabel{add-1}
$$ 
where $\imath : \mathsf{Gr} \to \mathsf{Gpd}$ is the natural fully faithful embedding  
and the functor $S( \ )$ assigns group-case association schemes to groups; see \cite[Sections 2 and 3]{K-M} for more detail. 
Moreover, $\calK$ is a functor given by sending a small category to 
the discrete schemoid. Observe that the functor $\calK$ is the left adjoint to the forgetful functor $U$; see Example \ref{ex:1}. 

In order to define a tame schemoid, for a schemoid $(\C, S)$, we consider the following conditions T(i), T(ii) and T(iii). \\

\noindent
T(i): The schemoid $(\C, S)$ is unital, namely, for  $J_0 :=  \{id_x\}_{x \in ob\C}$,
$$
\{id_x\}_{x \in ob\C} = \displaystyle{\bigcup_{\alpha \in S, \alpha\cap J_0\neq \varnothing}}\alpha. 
$$

\noindent
T(ii):  For any $\sigma \in S$ and $f, g \in \sigma$, there exist $\tau_1$ and $\tau_2$ in $S$ such that 
$id_{s(f)}, id_{s(g)} \in \tau_1$ and $id_{t(f)}, id_{t(g)} \in \tau_2$. 

\begin{rem}\label{rem:T(ii)}
It follows from the proof of \cite[Lemma 4.2]{K-M} that the condition T(ii) necessarily holds for a unital schemoid. 
We recall the argument for the reader. Suppose that $f, g \in \sigma$, $s(f) \in \tau_1$ and $s(g) \in \tau_1'$. Then 
$p_{\tau_1\sigma}^\sigma \geq 1$ because $f$ is indeed in $\pi^{-1}_{\tau_1\sigma}(f)$.  Then 
there exists $u \in \tau_1$ and $h \in \sigma$ such that $u\circ h = g$. If $(\C, S)$ is unital, we see that 
$u= id_{s(g)}$ and hence $\tau_1' = \tau_1$. 
The same argument as in above yields that there is an element $\tau_2$ such that $id_{t(f)}, id_{t(g)} \in \tau_2$. 
\end{rem}

The third one is required to introduce a category $[\C]$ associated with a schmeoid $(\C, S)$,  
whose set of objects is defined by   
$$
ob [\C] = \{id_x\}_{x \in ob\C} \slash \sim_S \ =\{[x] \},  
$$ 
where we write $[x]$ for $[id_x]$.  Under the condition T(ii), for an element $\sigma \in S$, there exists a unique 
element $[x]$ in $ob[\C]$ such that $id_{s(f)} \in [x]$ for any $f \in \sigma$. In this case, we write $s(\sigma) \subset [x]$. 
Similarly, we write $t(\sigma) \subset [y]$ if $id_{t(f)} \in [y]$ for any $f \in \sigma$. 
Define a set of morphisms from $[x]$ to $[y]$ in the diagram $[\C]$ by 
$$
mor_{[\C]}([x], [y]) = \{\sigma \in S \mid s(\sigma) \subset [x], \ t(\sigma) \subset [y]\}.     
$$

\noindent
T(iii): For morphisms $[x] \stackrel{\sigma}{\longrightarrow} [y] \stackrel{\tau}{\longrightarrow} [z]$, 
there exist $f \in \sigma$ and 
$g \in \tau$ such that $s(g) = t(f)$. Moreover, there is a unique element $\mu = \mu(\tau, \sigma)$ in $S$ 
such that $p_{\tau \sigma}^{\mu} \geq 1$.  

\medskip
A schemoid $(\C, S)$ is called {\it tame} if the conditions T(i) and T(iii) hold.  

\begin{rem}
Let $(\C, S)$ and $(\C', S')$ be tame schemoids. It is readily seen that 
the product schemoid $(\C \times \C', S\times S)$ is tame. 
\end{rem}

We observe that a schemoid whose underlying category is the face poset of a simplicial complex is not tame in general; 
see Remark \ref{rem:not_tame} for such schemoids.   

\begin{lem} Let $(\C, S)$ be a tame schemoid. Then the diagram $[\C]$ is a category with the composite of morphisms 
defined by $\tau \circ \sigma = \mu(\tau, \sigma)$. 
\end{lem}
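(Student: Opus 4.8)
The plan is to verify the three category axioms for $[\C]$ with the proposed composition $\tau\circ\sigma=\mu(\tau,\sigma)$: that the composite is a well-defined morphism $[x]\to[z]$, that each object carries a two-sided identity, and that composition is associative. Throughout I would rely on the following consequence of the \emph{uniqueness} clause of T(iii): if $g\in\tau$ and $f\in\sigma$ are composable (that is, $s(g)=t(f)$), then $g\circ f$ lies in some block $\mu'\in S$, and since $(g,f)$ witnesses $p_{\tau\sigma}^{\mu'}\geq 1$, uniqueness forces $\mu'=\mu(\tau,\sigma)$. Thus \emph{every} composable pair drawn from $\tau$ and $\sigma$ composes into the single block $\mu(\tau,\sigma)$; this is the technical fact that makes the whole argument run.

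For well-definedness, choose a composable pair $(g,f)$ with $g\circ f\in\mu(\tau,\sigma)$, which exists because $p_{\tau\sigma}^{\mu(\tau,\sigma)}\geq 1$. Then $s(g\circ f)=s(f)$ and $t(g\circ f)=t(g)$, with $id_{s(f)}\in[x]$ and $id_{t(g)}\in[z]$ by the source and target conditions on $\sigma$ and $\tau$. Since a tame schemoid is unital, condition T(ii) holds by Remark \ref{rem:T(ii)}, so the identities at the sources of all elements of $\mu(\tau,\sigma)$ lie in one common block, and likewise for the targets; as $id_{s(f)}$ and $id_{t(g)}$ are among them, those blocks are $[x]$ and $[z]$. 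Hence $s(\mu(\tau,\sigma))\subset[x]$ and $t(\mu(\tau,\sigma))\subset[z]$, i.e. $\mu(\tau,\sigma)\in mor_{[\C]}([x],[z])$.

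For identities, I would use unitality T(i): the block $e_{[x]}\in S$ containing $id_x$ consists entirely of identities and satisfies $s(e_{[x]}),t(e_{[x]})\subset[x]$, so $e_{[x]}\in mor_{[\C]}([x],[x])$. To see $e_{[x]}$ is a right unit for $\sigma:[x]\to[y]$, note that for each $f\in\sigma$ we have $id_{s(f)}\in e_{[x]}$ and $f\circ id_{s(f)}=f\in\sigma$, whence $p_{\sigma e_{[x]}}^{\sigma}\geq 1$; by uniqueness in T(iii) this gives $\sigma\circ e_{[x]}=\mu(\sigma,e_{[x]})=\sigma$. The left-unit law $e_{[y]}\circ\sigma=\sigma$ is symmetric, using $id_{t(f)}\circ f=f$.

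The main obstacle is associativity, and concretely the production of a genuinely composable triple in $\C$ (identity-equivalence of objects does not imply their equality, so matching sources and targets is not automatic). Given $[x]\stackrel{\sigma}{\to}[y]\stackrel{\tau}{\to}[z]\stackrel{\nu}{\to}[w]$, set $\mu_1=\mu(\tau,\sigma)$. Applying T(iii) to the pair $[x]\stackrel{\mu_1}{\to}[z]\stackrel{\nu}{\to}[w]$ yields $m\in\mu_1$ and $h\in\nu$ with $s(h)=t(m)$; and since $p_{\tau\sigma}^{\mu_1}\geq 1$, the fibre $(\pi_{\tau\sigma}^{\mu_1})^{-1}(m)$ is nonempty, so $m=g\circ f$ for some composable $f\in\sigma$, $g\in\tau$. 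Then $f,g,h$ is composable, because $s(g)=t(f)$ and $s(h)=t(g\circ f)=t(g)$. By the fact recorded in the first paragraph, $g\circ f\in\mu_1$, $h\circ g\in\mu(\nu,\tau)=:\mu_2$, $h\circ(g\circ f)\in\mu(\nu,\mu_1)$, and $(h\circ g)\circ f\in\mu(\mu_2,\sigma)$. Associativity in $\C$ gives $h\circ(g\circ f)=(h\circ g)\circ f$, so this common morphism lies in both $\mu(\nu,\mu_1)$ and $\mu(\mu_2,\sigma)$; as $S$ is a partition, $\mu(\nu,\mu_1)=\mu(\mu_2,\sigma)$, which is precisely $\nu\circ(\tau\circ\sigma)=(\nu\circ\tau)\circ\sigma$. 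The delicate point is thus the existence of the composable triple, and the device of decomposing a representative $m$ of $\mu_1$ that is already arranged to compose with $\nu$ is what circumvents it.
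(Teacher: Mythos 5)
Your proposal is correct and follows essentially the same route as the paper: for associativity, both arguments use T(iii) to produce a composable pair between $\mu(\tau,\sigma)$ and the third block, then invoke the schemoid axiom (constancy of the fibre cardinality $p_{\tau\sigma}^{\mu}$) to decompose the chosen representative of $\mu(\tau,\sigma)$ into a composable pair from $\sigma$ and $\tau$, and finally appeal to uniqueness in T(iii). The paper's proof only records the associativity check, so your additional verification of well-definedness and the unit laws is a harmless (and welcome) elaboration rather than a different method.
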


\begin{proof}
It suffices to show the associativity of the composite of morphisms. Consider composable morphisms 
$[x] \stackrel{\sigma_1}{\longrightarrow}[y]\stackrel{\sigma_2}{\longrightarrow}[z]\stackrel{\sigma_3}{\longrightarrow}[w]$. 
Suppose that $\mu = \sigma_2\circ \sigma_1$. Then the condition T(iii) implies that there exist composable morphisms 
$f \in \sigma_1$ and $g \in \sigma_2$ such that $gf$ is in $\mu$. By T(iii), we see that there exist $h \in \mu$ and $k \in \sigma_3$ such that $s(k) = t(h)$. Since $(\C, S)$ is a schemoid, it follows that 
\[
(\pi_{\sigma_2\sigma_1}^\mu)^{-1}(gf) \cong (\pi_{\sigma_2\sigma_1}^\mu)^{-1}(h). 
\]
Thus we have a diagram 
$\bullet \stackrel{f'}{\longrightarrow} \bullet \stackrel{g'}{\longrightarrow} \bullet  \stackrel{k}{\longrightarrow} \bullet$
for some $f' \in \sigma_1$ and $g' \in \sigma_2$ with $h=g'f'$. Let $\tau$ be an element in $S$ which contains $kg'f'$. 
The uniqueness in the condition T(iii) yields that 
$\sigma_3\circ (\sigma_2\circ \sigma_1) = \tau = (\sigma_3\circ \sigma_2) \circ \sigma_1$. This complets the proof.   
\end{proof}

\begin{lem} \label{lem:isomorphisms}
Let $(\C, S)$ be a tame schemoid. Then the category $\T^{(\C, S)}$ is isomorphic to the functor category 
$\T^{[\C]}$. 
\end{lem}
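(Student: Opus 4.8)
The plan is to exhibit an isomorphism of categories by constructing mutually inverse functors
$$
\Phi : \T^{[\C]} \longrightarrow \T^{(\C, S)}, \qquad
\Psi : \T^{(\C, S)} \longrightarrow \T^{[\C]},
$$
and verifying that $\Psi \circ \Phi$ and $\Phi \circ \Psi$ are the respective identity functors. Here I regard $\T^{[\C]}$ as the ordinary functor category of functors $[\C] \to \T$ and natural transformations, the preceding lemma having established that $[\C]$ is a genuine category with composition $\tau \circ \sigma = \mu(\tau, \sigma)$.

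First I would define $\Phi$. Given $M \in \T^{[\C]}$, I set $\Phi(M)(x) = M([x])$ on objects of $\C$, and for a morphism $f$ lying in the color class $\sigma \in S$, I set $\Phi(M)(f) = M(\sigma)$; by T(ii) (which holds automatically for a tame schemoid, see Remark \ref{rem:T(ii)}) the class $\sigma$ is a well-defined morphism $[s(f)] \to [t(f)]$ of $[\C]$, so the source and target of $M(\sigma)$ match $\Phi(M)(s(f))$ and $\Phi(M)(t(f))$. On a natural transformation $\alpha : M \Rightarrow N$ I set $\Phi(\alpha)_x = \alpha_{[x]}$. I would then check that $\Phi(M)$ is a functor which is constant on each color class — hence a morphism of schemoids into $\T$ — and that $\Phi(\alpha)$ is a locally constant natural transformation. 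Functoriality of $\Phi(M)$ reduces to two facts proved exactly as in the preceding lemma: the color of $id_x$ is the identity $id_{[x]}$ of $[\C]$, and the color of a composite $g\circ f$ equals $\mu(\tau,\sigma)$ when $\sigma,\tau$ are the colors of $f,g$; both use the uniqueness clause of T(iii). Local constancy of $\Phi(\alpha)$ is immediate, since $id_x \sim_S id_y$ forces $[x] = [y]$.

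Conversely I would define $\Psi$. For $u \in \T^{(\C, S)}$ I put $\Psi(u)([x]) = u(x)$ and, for $\sigma \in S$ viewed as a morphism of $[\C]$, I put $\Psi(u)(\sigma) = u(f)$ for any $f \in \sigma$; on a locally constant natural transformation $\eta$ I put $\Psi(\eta)_{[x]} = \eta_x$. The crux of the lemma is well-definedness. That $\Psi(u)([x])$ depends only on $[x]$ uses that $u$ is a schemoid morphism preserving identities: if $id_x \sim_S id_y$ then $u(id_x) = u(id_y)$, and since these equal $id_{u(x)}$ and $id_{u(y)}$ we get $u(x) = u(y)$. That $\Psi(u)(\sigma)$ is independent of the representative $f$ is precisely the defining constancy of $u$ on color classes, and the unitality condition T(i) guarantees that the color of $id_x$ consists only of identities, so $\Psi(u)$ respects identities. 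Independence of $\Psi(\eta)_{[x]}$ on the representative is exactly the local-constancy hypothesis, and naturality of $\Psi(\eta)$ reduces, again via local constancy, to naturality of $\eta$ on a representative morphism. Functoriality of $\Psi(u)$ for composition uses T(iii) to choose composable representatives $f\in\sigma$, $g\in\tau$ with $gf \in \mu(\tau,\sigma)$, whence $\Psi(u)(\tau\circ\sigma) = u(gf) = u(g)\circ u(f)$.

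Finally I would confirm that the two composites are identities, which is immediate from the definitions: $\Psi(\Phi(M))([x]) = \Phi(M)(x) = M([x])$ and $\Psi(\Phi(M))(\sigma) = \Phi(M)(f) = M(\sigma)$, while $\Phi(\Psi(u))(x) = \Psi(u)([x]) = u(x)$ and $\Phi(\Psi(u))(f) = \Psi(u)(\sigma) = u(f)$, together with the analogous componentwise equalities on natural transformations. Hence $\Phi$ and $\Psi$ are mutually inverse and $\T^{(\C, S)} \cong \T^{[\C]}$. I expect the only genuinely delicate step to be the bundle of well-definedness checks for $\Psi$, where the three defining features of a tame schemoid — T(i) for the behavior of identities, T(ii) (via Remark \ref{rem:T(ii)}) for source and target, and the uniqueness in T(iii) for composites — are each invoked.
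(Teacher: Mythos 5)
Your construction is exactly the one the paper uses: the paper defines the same pair of mutually inverse functors by $\Phi(G)([x]) = G(x)$, $\Phi(G)([f]) = G(f)$ (and the evident assignments on natural transformations) and simply asserts well-definedness, whereas you spell out the checks via T(i), T(ii) and the uniqueness clause of T(iii). The proposal is correct and takes essentially the same approach.
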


\begin{proof} 
Let $f$ be an element of $\sigma \in S$. Then we write $[f]$ for $\sigma$. 
We define functors 
\xymatrix@C35pt@R10pt{
\T^{(\C, S)}  \ar@<0.5ex>[r]^-{\Phi}_-{} &
\T^{[\C]}   \ar@<0.5ex>[l]^-{\Psi} } 
by 
$\Phi(G)([x]) = G(x)$, $\Phi(G)([f]) = G(f)$,  $\Psi(F)(x) = F([x])$ and $\Psi(F)(f)= F([f])$ for objects $G$ and $F$. 
Moreover, for morphisms $\eta : G \to G'$ in $\T^{(\C, S)}$ and $\nu : F \to F'$ in $\T^{[\C]}$, define 
$\Psi(\nu)x = \nu[x]$ and $\Phi(\eta)[x] = \eta[x]$, respectively. We see that $\Phi$ is a well-defined isomorphism with inverse $\Psi$. 
\end{proof}

It is known that the functor category $\T^\D$ is enough projective for any small category $\D$; see \cite[page 25]{Mitchell81} 
for example. Thus so is $\T^{(\C, S)}$ if $(\C, S)$ is tame. 

We have Mitchell's embedding theorem for a tame schemoid. 

\begin{thm} \label{thm:Mitchell's_thm} Let $(\C, S)$ be a tame schemoid. 
The category $\T^{(\C, S)}$ is embedded into the module category $\K[\C]\text{\em -Mod}$. 
Moreover,  $\T^{(\C, S)}$ is equivalent to the module category $\K(\C, S)\text{\em -Mod}$ 
if $ob [\C]$ is finite and 
the structure constant $p_{\tau \sigma}^\mu$ is less than or equal to $1$ for any $\tau, \sigma, \mu \in S$. 
\end{thm}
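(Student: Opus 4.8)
The plan is to leverage the two lemmas already established. By Lemma \ref{lem:isomorphisms}, the category $\T^{(\C, S)}$ is isomorphic to the functor category $\T^{[\C]}$, so it suffices to analyze the latter. First I would establish the embedding into $\K[\C]\text{-Mod}$. Here $\K[\C]$ denotes the category algebra of the quotient category $[\C]$, whose objects are the $\sim_S$-classes of identities and whose morphisms are the elements $\sigma \in S$ with composition $\tau \circ \sigma = \mu(\tau, \sigma)$. Since $\T^{[\C]}$ is the category of $\K$-linear functors $[\C] \to R\text{-Mod}$ (equivalently, additive functors from the $\K$-linearization $\K[\C]$ regarded as a ringoid), the standard correspondence for functor categories over small categories gives an embedding of $\T^{[\C]}$ into the module category $\K[\C]\text{-Mod}$. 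This is precisely Mitchell's embedding theorem \cite[Theorem 7.1]{Mitchell} applied to the small category $[\C]$; I would invoke it directly, noting that it does not require $ob[\C]$ to be finite for the embedding assertion.

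For the equivalence statement, the hypotheses $ob[\C]$ finite and $p_{\tau\sigma}^\mu \leq 1$ come into play. When $ob[\C]$ is finite, Mitchell's theorem upgrades the embedding to an equivalence $\T^{[\C]} \simeq \K[\C]\text{-Mod}$, where $\K[\C]$ is now a genuine unital $\K$-algebra (the finite sum of the hom-modules, with a unit given by the sum of the identity morphisms $[x]$). Composing with the isomorphism from Lemma \ref{lem:isomorphisms} yields $\T^{(\C, S)} \simeq \K[\C]\text{-Mod}$. The remaining task is to identify this algebra $\K[\C]$ with the Bose-Mesner algebra $\K(\C, S)$ under the condition that every structure constant is at most $1$.

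The crux is therefore the algebra identification $\K[\C] \cong \K(\C, S)$. I would define a $\K$-linear map $\K[\C] \to \K(\C, S)$ sending the basis morphism $\sigma \in S$ to the element $\sum_{s \in \sigma} s$, the canonical generator of the Bose-Mesner algebra. On basis elements, multiplication in $\K[\C]$ sends $\tau \cdot \sigma$ to $\mu(\tau, \sigma)$ (the unique $\mu$ with $p_{\tau\sigma}^\mu \geq 1$, by T(iii)), while in $\K(\C, S)$ one computes from the structure-constant formula
\[
\Bigl(\sum_{t \in \tau} t\Bigr)\Bigl(\sum_{s \in \sigma} s\Bigr) = \sum_{\mu \in S} p_{\tau\sigma}^\mu \Bigl(\sum_{u \in \mu} u\Bigr).
\]
Under the hypothesis $p_{\tau\sigma}^\mu \leq 1$ and the tameness condition T(iii) guaranteeing a unique nonzero structure constant, the right-hand side reduces to the single term $\sum_{u \in \mu(\tau,\sigma)} u$, so the map is multiplicative; T(i) ensures the unit of $\K[\C]$ (the sum of identity classes) maps to the unit of $\K(\C, S)$. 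Since the generators $\sum_{s\in\sigma} s$ are $\K$-linearly independent (the $\sigma$ partition $mor(\C)$) and generate $\K(\C, S)$ by definition, the map is a $\K$-algebra isomorphism. The main obstacle I anticipate is pinning down the finiteness bookkeeping that makes $\K[\C]$ a unital algebra and verifying carefully that the bound $p_{\tau\sigma}^\mu \leq 1$, combined with T(iii), collapses the structure-constant sum to exactly one term with coefficient $1$; without this collapse the Bose-Mesner multiplication would not match the composition in $[\C]$, and the two categories of modules would genuinely differ, as the authors remark in the general case.
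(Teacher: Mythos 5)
Your proposal is correct and follows essentially the same route as the paper: reduce to $\T^{[\C]}$ via Lemma \ref{lem:isomorphisms}, apply Mitchell's embedding theorem to the small category $[\C]$ (an equivalence once $ob[\C]$ is finite), and identify $\K[\C]$ with $\K(\C,S)$ via $\sigma\mapsto\sum_{s\in\sigma}s$, using T(iii) together with $p_{\tau\sigma}^\mu\leq 1$ to match the two multiplications. The paper states the algebra identification in one line, so your explicit verification of multiplicativity and of the unit is a welcome elaboration rather than a deviation.
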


\begin{proof}
By Lemma \ref{lem:isomorphisms} and Mitchell's embedding theorem for a usual functor category, we have an 
embedding $\T^{(\C, S)} \cong \T^{[\C]} \to \K[\C]\text{-Mod}$. As for the latter half of the assertion, 
the embedding gives an equivalence of categories.  Moreover, the assumption yields that 
the algebra $\K[\C]$ is isomorphic to the Bose-Mesner algebra $\K(\C, S)$. We have the result. 
\end{proof}

\begin{prop}\label{prop:groupoids}
Let $\G$ be a groupoid. Then the associated schemoid 
$\widetilde{S}(\G)$ is tame and the structure constants are less than or equal to $1$. 
In particular $S(G)$ is tame for any group $G$. 
\end{prop}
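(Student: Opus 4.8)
The plan is to verify the two tameness conditions T(i) and T(iii) directly for the schemoid $\widetilde{S}(\G)$ associated with a groupoid $\G$, using the explicit description of its morphisms and partition, and then to compute the structure constants to see that they never exceed $1$. Recall that the objects of $\widetilde{\G}$ are the morphisms of $\G$, that $\mathrm{Hom}_{\widetilde{\G}}(g,h)=\{(h,g)\}$ when $t(h)=t(g)$, and that the partition cell containing $(k,l)$ is $\G_f$ with $f=k^{-1}l$. The identity morphism at an object $g$ of $\widetilde{\G}$ is the pair $(g,g)$, which lies in $\G_{e}$ where $e=g^{-1}g=\mathrm{id}_{s(g)}$ is an identity of the groupoid $\G$.

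First I would check T(i), unitality. The identities of $\widetilde{\G}$ are exactly the pairs $(g,g)$, and each such pair lies in some cell $\G_{e}$ with $e$ a unit of $\G$. I would show that conversely, whenever a cell $\G_e$ meets $J_0=\{\mathrm{id}_x\}$ (so $e=k^{-1}l$ with $k=l$, forcing $e$ to be a groupoid identity), every element $(k,l)\in\G_e$ satisfies $k^{-1}l=e=\mathrm{id}_{s(k)}$, giving $l=k$; hence $\G_e\subseteq J_0$. This establishes that $J_0$ is a union of the cells it meets, which is exactly T(i). Next I would verify T(iii). Given composable $[x]\xrightarrow{\sigma}[y]\xrightarrow{\tau}[z]$ in $[\widetilde{\G}]$, I would exhibit composable representatives and, crucially, establish uniqueness of the target cell: if $(k,g)\in\sigma=\G_{a}$ and $(g,f)\in\tau=\G_{b}$ with $a=k^{-1}g$ and $b=g^{-1}f$, then the composite $(k,g)\circ(g,f)=(k,f)$ lies in $\G_{c}$ with $c=k^{-1}f=(k^{-1}g)(g^{-1}f)=ab$. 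Since the product $ab$ in the groupoid is determined by $a$ and $b$ alone, the cell $\G_{ab}$ is independent of the chosen representatives, which gives both the existence of a composable pair and the uniqueness of $\mu(\tau,\sigma)=\G_{ab}$ demanded by T(iii).

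The computation of structure constants is the heart of the matter and where I expect the main bookkeeping to lie. For $\sigma=\G_a$, $\tau=\G_b$ and $\mu=\G_c$, I would count $(\pi^{\mu}_{\tau\sigma})^{-1}(h)$ for a fixed $h=(k,f)\in\G_c$, i.e. the number of factorizations $h=(k,g)\circ(g,f)$ with $(k,g)\in\G_b$ and $(g,f)\in\G_a$. Writing out the constraints $k^{-1}g=b$ and $g^{-1}f=a$, the first equation already pins down $g=kb$ uniquely, so there is at most one such factorization; it exists precisely when $c=ba$ (matching the computation above up to the order convention in the concatenation map $\pi_{\tau\sigma}$). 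Hence $p^{\mu}_{\tau\sigma}\in\{0,1\}$ for all triples, which is the second claim. Finally, since the embedding $\mathsf{Gr}\to\mathsf{Gpd}$ sends a group $G$ to the one-object groupoid and the diagram \theeqn\ (more precisely the commutativity $\widetilde{S}\circ\imath=\jmath\circ S(\ )$ recorded in the display labelled \texttt{add-1}) identifies $\widetilde{S}(\imath(G))$ with $S(G)$, the tameness of $S(G)$ and the bound on its structure constants follow as the special case of a one-object groupoid.

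The step I expect to be the genuine obstacle is getting the variance and the left/right ordering conventions exactly right throughout: the concatenation map $\pi_{\sigma\tau}$ composes a pair $(f,g)$ with $s(f)=t(g)$, the composition in $\widetilde{\G}$ reads $(k,g)\circ(g,f)=(k,f)$, and the inversion in $\G_f=\{(k,l)\mid k^{-1}l=f\}$ all interact, so I would be careful that the bijection $(\pi^{\mu}_{\tau\sigma})^{-1}(f)\cong(\pi^{\mu}_{\tau\sigma})^{-1}(g)$ required in the definition of a schemoid is in fact realized by the unique-factorization argument rather than merely asserted, and that the equality $c=ab$ versus $c=ba$ is tracked consistently with whichever order the ambient schemoid structure on $\widetilde{S}(\G)$ uses.
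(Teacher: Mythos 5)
Your proposal is correct and follows essentially the same route as the paper: unitality for T(i), existence of composable representatives plus the identity $k^{-1}f=(k^{-1}g)(g^{-1}f)$ for the uniqueness in T(iii), and the unique factorization $g=kb$ to bound the structure constants by $1$ (the paper merely states this last point, citing \cite[Theorem 4.11]{K-M} for unitality where you verify it directly). The only blemish is the momentary swap of which of $\sigma,\tau$ contains the inner versus outer representative in your T(iii) paragraph, but you fix the convention correctly in the structure-constant count, so nothing is lost.
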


\begin{rem}
The result \cite[Lemma 4.4]{K-M} implies that a {\it semi-thin} schemoid is tame; see \cite[Definition 4.1]{K-M} for the definition of a 
semi-thin schemoid. Moreover, for a semi-thin schemoid $(\C, S)$, the groupoid $\widetilde{R}(\C, S)$ constructed in \cite[Section 4]
{K-M} coincides with the category $[\C]$ mentioned above. 
By virtue of the result \cite[Theorem 4.11]{K-M}, we see that $[\widetilde{S}(\G)]\cong\G$ as a category for a groupoid $\G$.
\end{rem}

The schemoid $\widetilde{S}(\G)$ associated with a groupoid $\G$ is {\it thin} and hence semi-thin; 
see \cite[Definition 4.8, Theorem 4.11]{K-M} again. Thus we have Proposition \ref{prop:groupoids}. 
Here a more direct proof of the result is given. 

\begin{proof}[Proof of Proposition \ref{prop:groupoids}] 
The condition T(i) holds on $\widetilde{S}(\G)$. In fact, the schemoidd is unital; see \cite[Theorem 4.11]{K-M}. 

We consider maps $[f] \stackrel{\G_k}{\longrightarrow} [g] \stackrel{\G_l}{\longrightarrow} [h]$. 
For $(g', f') \in \G_k$ and $(h', g'') \in \G_k$, we choose a morphism $(g'{g''}^{-1}h', g')$. Then it follows that 
$(g'{g''}^{-1}h')^{-1}g' = l$ and hence the morphism is in $\G_l$. In the schemoid $\widetilde{S}(\G)$, 
structure constants are less than or equal to $1$. 
Indeed, if $p_{\G_l\G_m}^\sigma \neq 0$, then $\sigma = \G_{lm}$. This also implies that the condition T(iii) holds on 
$\widetilde{S}(\G)$.
\end{proof}

Let  $u, v:  (\C, S_{\C}) \to (\C', S_{\C'})$ be morphisms of schemoids and $\eta : u \Rightarrow v$ be a natural transformation between functors $u$ and $v$.  We say that $\eta$ {\it preserves the partition of identities} if 
$id_x \sim_S id_y$, then $\eta(x)$ and $\eta(y)$ are contained in the same element $\tau$ in $S_{\C'}$. 
If $(\C', S_{\C'})$ is the discrete schemoid $\T$ mentioned above, then this notion coincides with that of 
locally constant natural transformations. 

\begin{prop}\label{prop:equivalence} Let $u :  (\C, S_{\C}) \to (\C', S_{\C'})$ be a morphisms of schemoids. \\
\text{\em (i)} The restriction functor $u^* : \T^{\C'} \to  \T^{\C}$ induced by $u$  gives rise to a functor 
$u^* : \T^{(\C', S_{\C'})} \to  \T^{(\C, S_{\C})}$.  \\
\text{\em (ii)}   Suppose that $u$ 
is an {\it equivalence}; that is, there exist a morphism $w :  (\C', S_{\C'}) \to (\C, S_{\C})$
and natural isomorphisms $uw \Rightarrow 1$ and $wu \Rightarrow 1$ which preserve 
the partition of identities and so do the inverses. Then $(\C, S_{\C})$ and $(\C', S_{\C'})$ are Morita equivalent. 
\end{prop}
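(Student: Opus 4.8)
The plan is to prove both statements through the restriction functors $u^*$ and $w^*$, showing in part (ii) that they form a pair of mutually inverse equivalences. For part (i), I would take an object $M$ of $\T^{(\C', S_{\C'})}$ and check that $u^*M = M\circ u$ is again a morphism of schemoids: if $\sigma \in S_{\C}$, then $u(\sigma)$ is contained in a single color $\tau \in S_{\C'}$, on which $M$ is constant, so $M\circ u$ is constant on $\sigma$. For a locally constant natural transformation $\eta : M \Rightarrow N$ the restriction has components $(u^*\eta)_x = \eta_{u(x)}$; if $id_x \sim_{S_{\C}} id_y$ then $u$ carries $id_x$ and $id_y$ into one color of $S_{\C'}$, so $id_{u(x)} \sim_{S_{\C'}} id_{u(y)}$ and local constancy of $\eta$ forces $\eta_{u(x)} = \eta_{u(y)}$. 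Hence $u^*$ restricts as claimed, and by the identical argument so does $w^*$.

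For part (ii), I would first record the identities $w^*\circ u^* = (uw)^*$ and $u^*\circ w^* = (wu)^*$ on the full functor categories $\T^{\C'}$ and $\T^{\C}$. Since $\alpha : uw \Rightarrow 1_{\C'}$ is a natural isomorphism, whiskering with an object $M$ produces a natural isomorphism $M\ast\alpha : (uw)^*M \Rightarrow M$ whose component at $c'$ is $M(\alpha_{c'})$; letting $M$ vary yields a natural isomorphism $(uw)^* \Rightarrow 1$ of endofunctors of $\T^{\C'}$, and likewise $\beta$ gives $(wu)^* \Rightarrow 1$ on $\T^{\C}$. On the full functor categories this already exhibits $u^*$ and $w^*$ as inverse equivalences, so everything reduces to transporting these data down to the non-full subcategories $\T^{(\C', S_{\C'})}$ and $\T^{(\C, S_{\C})}$.

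This last reduction is the main obstacle, precisely because $\T^{(\C', S_{\C'})}$ is \emph{not} a full subcategory of $\T^{\C'}$: I must verify that $M\ast\alpha$ is itself a morphism there, i.e. a locally constant natural transformation, and this is exactly where the hypothesis enters. If $id_{c'} \sim_{S_{\C'}} id_{d'}$, then because $\alpha$ preserves the partition of identities the morphisms $\alpha_{c'}$ and $\alpha_{d'}$ lie in a common color $\tau \in S_{\C'}$; since $M$ is a morphism of schemoids it is constant on $\tau$, whence $M(\alpha_{c'}) = M(\alpha_{d'})$ and $M\ast\alpha$ is locally constant. To see that $M\ast\alpha$ is an isomorphism \emph{in the subcategory}, I would note that its inverse has components $M(\alpha^{-1}_{c'})$ and apply the same reasoning to $\alpha^{-1}$, which preserves the partition of identities by assumption; thus the inverse is again locally constant. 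The same argument applies verbatim to $\beta$ and $\beta^{-1}$ on $\T^{(\C, S_{\C})}$.

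Combining these steps, $u^*$ and $w^*$ restrict to functors between $\T^{(\C', S_{\C'})}$ and $\T^{(\C, S_{\C})}$ whose composites are naturally isomorphic to the respective identities through isomorphisms that live in these categories, so they are mutually inverse equivalences. Since both functor categories are abelian and any equivalence between abelian categories is automatically additive and exact, this is an equivalence of abelian categories, and therefore $(\C, S_{\C})$ and $(\C', S_{\C'})$ are Morita equivalent.
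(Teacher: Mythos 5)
Your proposal is correct and follows essentially the same route as the paper: part (i) is verified exactly as in the paper by checking constancy of $M\circ u$ on colors and local constancy of restricted natural transformations, and part (ii) transports the natural isomorphisms $uw\Rightarrow 1$, $wu\Rightarrow 1$ to isomorphisms $w^*u^*\Rightarrow 1$, $u^*w^*\Rightarrow 1$ with components $M(\alpha_{c'})$, using the partition-preservation hypothesis to see these are locally constant. Your explicit check that the inverse transformations are also locally constant is a detail the paper leaves as "immediate," but the argument is the same.
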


\begin{proof} 
(i) For any object $M$ in  $\T^{(\C', S_{\C'})}$ and for $f, g \in \sigma$, we see that 
$$
u^*M(f) = Mu(f) = Mu(g) = u^*M(g).
$$
Observe that $M$ is a morphism of schemoids. For a morphism $\alpha \in  \T^{(\C', S_{\C'})}(M, N)$, namely 
a locally constant natural transformation, it follows that 
$$
(u^*\alpha)(x) = \alpha u(x) = \alpha u(y) =(u^*\alpha)(y) 
$$
whenever $id_x\sim_S id_y$. In fact, $id_{u(x)} = u(id_x) \sim_{S'} u(id_y) = id_{u(y)}$.  \\
(ii) Let $\eta : uw \Rightarrow 1$ be the natural isomorphism. We will show that $\eta$ 
induces a natural isomorphism $\widetilde{\eta} : w^*u^*  \Rightarrow 1$ defined by 
$\widetilde{\eta}(M)(x) :=M\eta(x)$. 
Suppose that $id_x \sim_S id_y$. By assumption, there exists $\tau \in S$ such that 
$\eta(x)$ and $\eta(y)$ are in $\tau$. 
Since $M$ is a morphism of schemoids, it follows that 
$M\eta(x) = M\eta(y)$ and hence  $\widetilde{\eta}(M)$ is in $\T^{(\C', S_{\C'})}$. It is immediate that 
$\widetilde{\eta}$ gives an equivalence between $\T^{(\C, S_{\C})}$ and $T^{(\C', S_{\C'})}$.
\end{proof}

\begin{rem}\label{Mitchell's_correspondence}
Let $(C, S)$ be a schemoid with $mor(\C)$ finite. Suppose that  the schemoid 
$(\C, S)$ satisfies the condition T(i) and hence T(ii). For example, a schemoid arising from 
an association schemes is such one. We define a small $R$-linear category $R\text{-}[\C]$ by 
$ob(R\text{-}[\C]) = ob([\C])$ and 
$$
\text{Hom}_{R\text{-}[\C]}([x], [y]) := R\langle \text{Hom}_{[\C]}([x], [y]) \rangle, 
$$
namely the free $R$-module generated by the set $\text{Hom}_{[\C]}([x], [y])$. For morphisms 
$\sigma \in \text{Hom}_{[\C]}([y], [z])$ and $\tau \in \text{Hom}_{[\C]}([x], [y])$, the composite $\sigma \circ \tau$ of the morphisms
is defined  to be $\sum_{\mu}p^{\mu}_{\sigma \tau}\mu$. 
Let $\T^{R\text{-}[\C]}$ be the functor category of additive functors from $R\text{-}[\C]$ to $\T$. We define a pair $(\theta, \eta)$ of functors 
$$\xymatrix@C25pt@R20pt{
\theta  : \T^{R\text{-}[\C]}  \ar@<0.5ex>[r]&
R(\C, S)\text{-Mod}  : \eta  \ar@<0.5ex>[l]}, $$ 
which is so-called Mitchell's correspondence, by 
$\theta(F) = \bigoplus_{[x] \in ob([\C])}F([x])$ and $\eta (M)([x]) = [id_x]M$.  It is readily seen that 
$\theta$ is an embedding with left inverse $\eta$. Moreover, we see that $\theta$ is an equivalence with inverse $\eta$ if 
$ob([\C])$ is finite. Observe that $\T^{R\text{-}[\C]}$ is {\it not} functorial with respect to morphisms in $q\mathsf{ASmd}$ in general; 
see \cite[Section 6]{K-M} and  \cite[Section 6]{F_1}.
\end{rem}

\begin{prop}\label{prop:invariant}
Let $(\D, S')$ be a tame schemoid and $u' : (\C', S_{\C'}) \to (\D, S')$ a morphism of schemoids. 
Let $K :  (\C, S_{\C}) \to (\C', S_{\C'})$ be a morphism of schemoids whose underlying functor 
$K: \C \to \C'$ gives an equivalence of categories.  Suppose that the inverse to the functor $K$ is a morphism of schemoids. 
Then for any module $M$ in $\T^{(\C', S_{\C'})}$, one has an isomorphism
$$
H^*( (\C, S_{\C}) \stackrel{u'K}{\to} (\D, S') ; K^*M) \cong H^*( (\C', S_{\C'}) \stackrel{u'}{\to} (\D, S') ; M). 
$$
\end{prop}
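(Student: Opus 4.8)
The plan is to reduce the asserted isomorphism of cohomology groups to an isomorphism of the two right Kan extensions occurring as the second arguments of the relevant $\text{Ext}$-groups, and then to obtain that isomorphism from the fact that $K^*$ is an equivalence together with the uniqueness of adjoints.

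First I would unwind the definition of schemoid cohomology. Both sides are $\text{Ext}$-groups computed in the \emph{same} abelian category $\T^{(\D, S')}$ with the \emph{same} first argument $\underline{\K}$, namely
$$
H^*( (\C, S_{\C}) \stackrel{u'K}{\to} (\D, S') ; K^*M) = \text{Ext}_{\T^{(\D, S')}}^*(\underline{\K}, \text{Ran}_{u'K}(K^*M))
$$
and
$$
H^*( (\C', S_{\C'}) \stackrel{u'}{\to} (\D, S') ; M) = \text{Ext}_{\T^{(\D, S')}}^*(\underline{\K}, \text{Ran}_{u'}M).
$$
Hence it suffices to produce a natural isomorphism of functors $\text{Ran}_{u'K}\circ K^* \cong \text{Ran}_{u'}$ from $\T^{(\C', S_{\C'})}$ to $\T^{(\D, S')}$; applying $\text{Ext}_{\T^{(\D, S')}}^*(\underline{\K}, -)$ then gives the claim. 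Observe that both $\text{Ran}_{u'K}$ and $\text{Ran}_{u'}$ exist by Theorem \ref{thm:adjoints}, since the common target $(\D, S')$ is tame.

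The second step is to factor the restriction functor as $(u'K)^* = K^* \circ (u')^*$ and to identify a right adjoint of $K^*$. Here I would invoke Proposition \ref{prop:equivalence}: the hypotheses that $K$ is a schemoid morphism whose underlying functor is an equivalence and whose inverse $W := K^{-1}$ is again a schemoid morphism say precisely that $K$ is an equivalence of schemoids, so that $K^* : \T^{(\C', S_{\C'})} \to \T^{(\C, S_{\C})}$ is an equivalence of abelian categories with quasi-inverse $W^*$. In particular $W^*$ is a right adjoint of $K^*$ and $W^*K^* \cong 1$; note that it is $W^*$, and \emph{not} $\text{Ran}_K$, that serves as this adjoint, since $(\C', S_{\C'})$ is not assumed tame and Theorem \ref{thm:adjoints} does not apply to $K$. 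Since a right adjoint of a composite is the composite of the right adjoints in the opposite order, uniqueness of adjoints yields $\text{Ran}_{u'K} \cong \text{Ran}_{u'}\circ W^*$, whence
$$
\text{Ran}_{u'K}(K^*M) \cong \text{Ran}_{u'}(W^*K^*M) \cong \text{Ran}_{u'}(M)
$$
naturally in $M$, which is exactly what the first step requires.

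The main obstacle is the second step, precisely the verification that $K^*$ is an equivalence of the schemoid functor categories rather than merely of the ambient functor categories $\T^{\C'}$ and $\T^{\C}$. Since $\T^{(\C, S_{\C})}$ is a non-full subcategory of $\T^{\C}$, I must check that the natural isomorphisms $KW \Rightarrow 1$ and $WK \Rightarrow 1$ witnessing the equivalence of underlying categories, together with their inverses, preserve the partition of identities, so that the induced isomorphisms $W^*K^* \cong 1$ and $K^*W^* \cong 1$ are built from locally constant natural transformations; this is exactly the input required by the argument in the proof of Proposition \ref{prop:equivalence} (ii). Once this is secured, the remaining work is the formal adjunction bookkeeping above, and naturality in $M$ is automatic because every isomorphism produced is an isomorphism of functors.
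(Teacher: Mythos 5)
Your reduction of the statement to a natural isomorphism $\text{Ran}_{u'K}(K^*M)\cong \text{Ran}_{u'}(M)$ in $\T^{(\D,S')}$ is also what the paper proves, but the route you take to it has a genuine gap. You obtain $\text{Ran}_{u'K}\cong \text{Ran}_{u'}\circ W^*$ from the claim that $K^*:\T^{(\C',S_{\C'})}\to\T^{(\C,S_{\C})}$ is an equivalence of the \emph{schemoid} functor categories with quasi-inverse $W^*$, citing Proposition \ref{prop:equivalence}(ii). That proposition requires the natural isomorphisms $KW\Rightarrow 1$ and $WK\Rightarrow 1$ (and their inverses) to preserve the partition of identities; without this, the induced transformation $\widetilde{\eta}(M)(x)=M\eta(x)$ need not be locally constant and hence is not a morphism of $\T^{(\C',S_{\C'})}$ at all. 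Proposition \ref{prop:invariant} does not assume this condition, and it does not follow from the stated hypotheses: Remark \ref{rem:example_Hamming_one} exhibits precisely a schemoid morphism whose underlying functor is an equivalence, whose quasi-inverse is again a schemoid morphism, and whose comparison isomorphism fails to preserve the partition of identities. You name this verification as ``the main obstacle'' but do not supply it, and under the given hypotheses it cannot be supplied; so the step $W^*K^*\cong 1$ in $\T^{(\C',S_{\C'})}$, and with it the identification of $W^*$ as a right adjoint of $K^*$ on the schemoid categories, is unjustified.

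The paper's proof never asks $K^*$ to be an equivalence of the schemoid functor categories. For each $N$ in $\T^{(\D,S')}$ it chains the adjunction isomorphisms of Theorem \ref{thm:adjoints}, $\T^{(\D,S')}(N,\text{Ran}_{u'}M)\cong\T^{(\C',S_{\C'})}({u'}^*N,M)$ and $\T^{(\C,S_{\C})}((u'K)^*N,K^*M)\cong\T^{(\D,S')}(N,\text{Ran}_{u'K}K^*M)$, and links the two middle Hom-sets by $K^*$. The point is that both middle Hom-sets have first variable in the image of a restriction functor along a morphism into the tame $(\D,S')$, so Lemma \ref{lem:coincides} identifies them with the \emph{ordinary} Hom-sets $\T^{\C'}({u'}^*N,M)$ and $\T^{\C}((u'K)^*N,K^*M)$; there, only full faithfulness of $K^*:\T^{\C'}\to\T^{\C}$ is needed, which is immediate from $K$ being an equivalence of the underlying categories. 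Yoneda then gives $\text{Ran}_{u'}M\cong\text{Ran}_{u'K}K^*M$. If you replace your second step by this Hom-set computation, your first step goes through unchanged.
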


\begin{proof}
For any module $N$ in $\T^{(\D, S')}$, 
we have isomorphisms 
\begin{eqnarray*}
\T^{(\D, S')}(N, \text{Ran}_{u'}M) &\cong & \T^{(\C', S_{\C'})}({u'}^*N, M) \\
&\maprightud{{K}^*}{\cong}&  \T^{(\C, S_{\C})}({K}^*{u'}^*N, {K}^*M) \\
&\cong & \T^{(\C, S_{\C})}({(u'K)}^*N, K^*M) \cong \T^{(\D, S')}(N, \text{Ran}_{u'K}K^*M). 
\end{eqnarray*}
The second isomorphism follows from Lemma \ref{lem:coincides} below. We have the result. 
\end{proof}

In Section \ref{Example}, we will obtain a Morita equivalence which is induced by a {\it non-equivalent} 
morphism between schemoids; see Remark \ref{rem:example_Hamming_one}.

\begin{lem}\label{lem:coincides}
Let $(\D, S')$ be a tame schemoid and $u : (\C, S) \to (\D, S')$ a morphism of schemoids. 
Suppose that one of modules $N$ and $M$ in $\T^{(\C, S)}$ is in the image of the 
functor $u^* : \T^{(\D, S')} \to \T^{(\C, S)}$. Then the Hom-set $\T^{(\C, S)}(M, N)$ coincides with 
$\T^{\C}(M, N)$.
\end{lem}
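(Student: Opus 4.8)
The containment $\T^{(\C,S)}(M,N)\subseteq \T^{\C}(M,N)$ is immediate, since a locally constant natural transformation is in particular a natural transformation, so all the content lies in the reverse containment. The plan is to show that, under the hypothesis, every $\eta\in\T^{\C}(M,N)$ is automatically locally constant, hence already defines a morphism in $\T^{(\C,S)}$. By symmetry I would carry out the case $N=u^*N'$ with $N'\in\T^{(\D,S')}$ in full and deduce the case $M=u^*M'$ dually, interchanging source with target and monomorphisms with epimorphisms. Thus I fix $\eta\in\T^{\C}(M,N)$ together with objects $x,y$ satisfying $id_x\sim_S id_y$, and aim at $\eta_x=\eta_y$.

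First I would dispose of the object level. Because $M$ and $N$ are morphisms of schemoids, $id_x\sim_S id_y$ gives $id_{M(x)}=M(id_x)=M(id_y)=id_{M(y)}$, and likewise for $N$, whence $M(x)=M(y)$ and $N(x)=N(y)$; so $\eta_x,\eta_y\colon M(x)\to N(x)$ are parallel maps and $\eta_x=\eta_y$ is a meaningful equation. The genuine extra input from the hypothesis is this: since $u$ is a morphism of schemoids, $id_{u(x)}\sim_{S'}id_{u(y)}$, so $[u(x)]=[u(y)]$ in the category $[\D]$ of the tame schemoid $(\D,S')$, and under the isomorphism $\T^{(\D,S')}\cong\T^{[\D]}$ of Lemma \ref{lem:isomorphisms} the functor $N=u^*N'$ factors through $[\D]$.

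The decisive step is to turn the identification $[u(x)]=[u(y)]$ into a naturality constraint. The plan is to exhibit a morphism $\psi\colon x\to w$ in $\C$ together with a morphism $\psi'\colon y\to w$ lying in the same color class as $\psi$; since $M$ and $N$ are constant on colors one then has $M(\psi)=M(\psi')$ and $N(\psi)=N(\psi')$, and the two naturality squares of $\eta$ combine to give
\[
N(\psi)\,\eta_x=\eta_w\,M(\psi)=\eta_w\,M(\psi')=N(\psi')\,\eta_y=N(\psi)\,\eta_y ,
\]
so that $\eta_x=\eta_y$ as soon as $N(\psi)$ is a monomorphism. (In the dual case $M=u^*M'$ one uses instead a pair $w\to x$, $w\to y$ of equal color and the epimorphism $M(\psi)$.) To produce $\psi,\psi'$ I would feed the equality $[u(x)]=[u(y)]$ back through $u$ and invoke the schemoid axiom for $(\C,S)$, which matches the fibers $(\pi^{\mu}_{\rho\sigma})^{-1}(id_x)$ and $(\pi^{\mu}_{\rho\sigma})^{-1}(id_y)$; this is exactly where the factorization of $N$ (or $M$) through a tame schemoid must be used.

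I expect this last step to be the main obstacle, and the real subtlety of the lemma. The equivalence $id_x\sim_S id_y$ on its own supplies only the two identities, whose $N$-images are identity maps and so yield nothing; it is the hypothesis that $N$ (or $M$) is pulled back from the tame $(\D,S')$ that should upgrade this to a genuine same-colored pair $\psi,\psi'$ with monic (resp. epic) image, because $[\D]$ records color data that $(\C,S)$ alone need not detect. Accordingly I would organize the argument so that the construction of $\psi,\psi'$ and the verification of the monomorphism (resp. epimorphism) property are extracted directly from the condition T(iii) together with the assumption on the image of $u^*$, and I would not expect the conclusion to persist once that assumption is dropped.
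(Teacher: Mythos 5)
There is a genuine gap: the step you yourself identify as ``the main obstacle'' --- producing $\psi\colon x\to w$ and $\psi'\colon y\to w$ in the same color class with $N(\psi)$ a monomorphism --- is not carried out, and it cannot be carried out in general, so the whole strategy collapses. First, nothing forces such a pair to exist: $x$ and $y$ may admit no non-identity outgoing morphisms at all (take $\C$ discrete), and the tameness condition T(iii) is a hypothesis on $(\D,S')$, not on $(\C,S)$, so you cannot invoke it for $\C$. The schemoid axiom for $(\C,S)$ that you propose to use, namely $(\pi^{\mu}_{\rho\sigma})^{-1}(id_x)\cong(\pi^{\mu}_{\rho\sigma})^{-1}(id_y)$, only matches cardinalities of factorizations of the \emph{identities} through a pair of classes, i.e.\ split epi--mono pairs; it neither produces a common target $w$ for $x$ and $y$ nor says anything about the behaviour of $N$ on the resulting morphisms. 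Second, even granting such a pair, there is no mechanism in the hypotheses that makes $N(\psi)$ injective: $N$ takes values in $R$-Mod and a functor may send any morphism to the zero map, so the cancellation $N(\psi)\eta_x=N(\psi)\eta_y\Rightarrow\eta_x=\eta_y$ is unavailable. Naturality squares alone cannot detect the color data, which is exactly why the lemma needs a different idea.

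The paper's proof takes a route you do not touch: it never hunts for witnessing morphisms inside $\C$. Using Lemma~\ref{lem:isomorphisms} it writes $u^*N=(\pi\circ u)^*\Phi N$ as a functor factoring through the category $[\D]$, and then invokes the Kan-extension adjunction
$\T^{[\D]}(\Phi N,\mathrm{Ran}_{\pi\circ u}M)\cong\T^{\C}((\pi\circ u)^*\Phi N,M)$,
whose bijection sends $f$ to $\mathrm{counit}\circ(\pi\circ u)^*(f)$. The argument is then that $(\pi\circ u)^*(f)$ and the counit are each locally constant (the latter because $id_c\sim_S id_{c'}$ forces $[u(c)]=[u(c')]$), so \emph{surjectivity} of the adjunction bijection --- rather than any cancellation --- shows that every natural transformation $u^*N\to M$ is already locally constant; the case where $M$ lies in the image of $u^*$ is handled dually with $\mathrm{Lan}_{\pi\circ u}$. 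If you want to repair your write-up, you should abandon the monomorphism device and work with this factorization through $[\D]$ and the explicit unit/counit of the Kan-extension adjunctions.
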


\begin{proof} 
We consider two adjoints 
\begin{eqnarray*}
\Gamma :  \T^{[\D]}(\Phi N, \text{Ran}_{(\pi\circ u)}M) &\cong&  \T^{\C}((\pi\circ u)^* \Phi N, M)=\T^{\C}(u^*N, M) \  \text{and}\\
\Omega : \T^{[\D]}(\text{Lan}_{(\pi\circ u)}M, \Phi N) &\cong&  \T^{\C}(M, (\pi\circ u)^* \Phi N)=\T^{\C}(M, u^*N).
\end{eqnarray*}
With explicit forms of left and right Kan extensions, it follows that the image of the adjoints consist of locally 
constant natural transformations. To see this, 
we recall the right Kan extension given by 
$$
\text{Ran}_{(\pi\circ u)}M ([d]) =\int_{c\in \C}M(c)^{[\D]([d], [u(c)])}
$$
for an object $[d] \in [\D]$; see \cite[X]{M}. 
The bijection $\Gamma$ is defined by the composite 
$$
\xymatrix@C35pt@R20pt{
\Gamma(f) : (\pi\circ u)^* \Phi N \ar[r]^-{(\pi\circ u)^*(f)} & (\pi\circ u)^*\text{Ran}_{\pi\circ u}M \ar[r]^-{counit} &
M
}
$$
for any $f : \Phi N \to \text{Ran}_{\pi\circ u}M$.  In view of the definition of $\pi$, 
we see that $(\pi\circ u)^*(f)$ is locally constant. 
Moreover, it follows that 
the counit $(\pi\circ u)^*\text{Ran}_{(\pi\circ u)}M \to M$ is locally constant. 
In fact, for any $c \in ob \C$, the map ${\text counit}_c$ is given by the projection 
$$
\text{Ran}_{(\pi\circ u)}M ([u(c)]) \subset \prod_{e\in \C} M(c)^{[u(c)] \stackrel{}{\to} [u(e)]} \to M(c)^{[u(c)] \stackrel{id}{\to} [u(c)]}
=M(c). 
$$
If $id_c \sim_S id_{c'}$, then $id_{u(c)}\sim_S id_{u(c')}$ and hence $[u(c)] = [u(c')]$. This implies that 
${\text counit}$ is locally constant. Observe that $M(c) =M(c')$ because $M$ is in $\T^{(\C, S)}$.
By definition, we see that $\T^{(\C, S)}(M, N)$ is a submodule of $\T^{\C}(M, N)$. 
Then $\T^{(\C, S)}(M, N)=\T^{\C}(M, N)$ if $M$ is in the image of the restriction functor. 
We leave the rest of this proof to the reader. 
\end{proof}

\begin{proof}[Proof of Theorem \ref{thm:adjoints}] We recall isomorphisms 
$\xymatrix@C35pt@R20pt{
\T^{(\D, S')}  \ar@<1ex>[r]^-{\Phi}_-{\cong} &
\T^{[\D]}   \ar@<1ex>[l]^-{\Psi} } $
and the projection functor $\pi : \D \to [\D]$ 
in the proof of Lemma \ref{lem:isomorphisms}. 
Let $F$ and $G$ be objects in $\T^{(\D, S')}$ and 
$\T^{(\C, S)}$, respectively. 
We observe that $\Phi(F)\circ \pi = F$. 
Lemma \ref{lem:coincides} and the existence of a left adjoint in functor categories yield a sequence of isomorphisms 
\begin{eqnarray*}
\T^{(\C, S)}(u^*F, G) &\cong& \T^{\C}(u^*F, G) \\
&\cong &  \T^{\C}((\pi\circ u)^*\Phi(F), G) \\
&\maprightud{\text{adjoint}}{\cong} &   \T^{[\D]}(\Phi(F), \text{Ran}_{\pi\circ u}G) \\
& \maprightud{\Psi}{\cong}& 
\T^{(\D, S')}(F, \Psi\text{Ran}_{\pi\circ u}G).  
\end{eqnarray*}
Thus it follows that $\text{Ran}_u:=\Psi\text{Ran}_{\pi\circ u} : \T^{(\C, S)} \to \T^{(\D, S')}$ is the right adjoint to 
$u^*$. By the same argument as above, we have the left adjoint to the restriction functor $u^*$. This completes the proof.
\end{proof}

\begin{rem}\label{rem:inverse}
Let $v : (\D, S') \to (\C, S)$ be a morphism of schemoids. The same argument as in the proof of 
Theorem  \ref{thm:adjoints} does not work well in showing the existence of the left/right adjoint of the restriction functor $v^* : \T^{(\C, S)} \to \T^{(\D, S')}$ in general even if $(\D, S')$ is tame. Indeed, suppose that 
$id_c \sim_S id_c'$. We claim that $\text{Lan}_vN (c) = \text{Lan}_vN (c')$ for any $N$ in $\T^{(\D, S')}$. 
Recall the left adjoint $\text{Lan}_v :  \T^\D \to T^\C$ is defined by 
$$
\text{Lan}_vN (c) = \displaystyle\int^{d \in \D} \C(v(d), c)\cdot N(d)
$$
for $N$ in $\T^{\D}$ and $c \in \C$; see \cite[X]{M}.  There is no relation between the hom-sets 
$\C(v(d), c)$ and $\C(v(d), c')$ in general. 
\end{rem}

\begin{proof}[Proof of Theorem \ref{thm:model_category_str}]
We first recall the cofibrantly generated model category structure of a module category 
described in \cite[Theorem 2.3.11]{Hovey} for example.  

Let $I$ and $J$ be the generating set of cofibrations and the generating set of trivial cofibrations of $Ch(\K[\D]\text{-Mod})$. 
That is, $I$ and  $J$ consist of maps $S^{n-1} \to D^n$, which are inclusions, and $0 \to D^n$ for $n \in {\mathbb Z}$, respectively.  
Here $D^n$ denotes the chain complex defined by $(D^n)_k = \K[\D]$ if $k = n$ or $k = n-1$ and $0$ otherwise with the only non trivial 
differential $d_n = id$. Moreover,  $S^{n-1}$ is the chain complex defined by $(S^{n-1})_{n-1} = \K[\D]$ and 
$(S^{n-1})_{k} = 0$ if $k \neq n-1$. 
Then we have to verify that (1) $u^*I$ and $u^*J$ permit the small object argument and that (2) $\text{Ran}_{u}$ takes relative 
$u^*J$-cell complexes to weak equivalences. The first one follows from the same argument as in 
\cite[Exmaple 2.1.6]{Hovey}. 

By making use of the description of the right adjoint $\text{Ran}_{u}$ in Theorem \ref{thm:Adj}, 
we see that the condition (2) holds. In fact, the domains of elements in $u^*J$ are trivial. Then every relative $u^*J$-cell complex 
has the form $j : A \to A \oplus \displaystyle{\bigoplus}_{\beta < \lambda} G_\beta$ for some ordinal $\lambda$, 
where $A$ is an appropriate chain complex and $G_\beta = u^*D^{n(\beta)}$. 
Since the nontrivial differential in each $G_\beta$ is the identity map, 
it follows that $\text{Ran}_u ({\oplus}_{\beta < \lambda} G_\beta)$ is contractible. This yields that 
$\text{Ran}_u(j) : \text{Ran}_uA \to \text{Ran}_u(A \oplus \displaystyle{\bigoplus}_{\beta < \lambda} G_\beta)$ is weak equivalence. 
Observe that $\text{Ran}_u$ is additive. We have the result. 
\end{proof}

\begin{proof}[Proof of Corollary \ref{cor:cohomologies}]
Since $(u^*, \text{Ran}_u)$ is a Quillen pair, it follows from \cite[Theorem 8.5.18]{Hir} that there exists a natural isomorphism 
$$ 
\text{Ext}_{\T^{(\C, S)}}^n({\mathbb L}u^* \underline{\K}, M) = 
\text{Hom}_{\text{D}(\T^{(\C, S)})}(u^*(\widetilde{C}\underline{\K}),  M[n]) \cong 
\text{Ext}_{\T^{(\D, S')}}^n(\underline{\K}, {\mathbb R}\text{Ran}_u M), 
$$ 
where $\widetilde{C}\underline{\K} \to \underline{\K}$ denotes a fibrant cofibrant approximation; 
see \cite[Definition 8.1.2, Proposition 8.1.3]{Hir}.  
Each object $M$ in $\T^{(\C, S)}$ is fibrant. In fact, $M \to 0$ has the right lifting property with respect to every element of 
$u^*J$.  
Then for the total derived functor ${\mathbb R}\text{Ran}_u$, 
we see that ${\mathbb R}\text{Ran}_uM = \text{Ran}_uM$. This completes the proof.
\end{proof}

\begin{proof}[Proof of Corollary \ref{cor:original_one}]
This follows from Theorem \ref{thm:Mitchell's_thm}. Observe that schemoids of 
the forms $S(G)$ and ${\mathcal K}(\C)$ are tame.  
\end{proof}

\section{Examples}\label{Example}

\begin{ex} Let $G$ be a group and $H$ a subgroup.  
Let $G/H$ denote the group-case association scheme whose underlying set is the homogeneous one $G/H$.   
By considering a normal subgroup $N$ containing $H$, 
we have a natural map $u : G/H \to G/N$. Then $G/N$ is a group and hence a tame schemoid. Therefore, 
for a functor $M \in \T^{(G/H)}$,   
schemoid cohomology $H^*(G/H \stackrel{u}{\to} G/N; M)$ is isomorphic to group cohomology of the form  
$H^*(G/N ; \text{Ran}_uM)$. 
\end{ex}

\begin{ex}\label{ex:N}
Let $L$ be a simplicial complex and ${\mathcal N}$ is 
a category whose objects are non-negative integers and which has the one arrow $i \to j$ if and only if $i\leq j$.  
The length of the arrow $i \to j$ is defined to be the difference $j -i$.  
In the category ${\mathcal N}$, lengths of arrows give a partition $len$ of the set of morphisms (arrows). 
Then we see that $({\mathcal N}, len)$ is a tame schemoid whose structure constants are less than or equal to $1$. 
Moreover, the Bose-Mesner algebra of this schemoid is isomorphic to the polynomial algebra 
$\K[\sigma_1]$. 

We have a morphism  $u: (P(L), S) \to ({\mathcal N}, len)$ of schemoids by "collapsing" the Hasse diagram 
of the face poset of $L$, where $(P(L), S)$ is the schemoid associated with $L$; see Remark \ref{rem:height_functor}. 
Thus the schemoid cohomolgoly of the morphism $u$ is considered by using the Koszul resolution of the constant functor 
$\underline{\K}$ as a $\K[\sigma_1]$-module. 
In fact, we have 
\begin{eqnarray*}
H^*(P(L) \stackrel{u}{\to} ({\mathcal N}, len); M) &\cong& \text{Ext}^*_{\K[\sigma_1]}(\underline{\K}, \text{Ran}_u M)\\
&\cong& H(\text{Hom}(\wedge(s^{-1}\sigma_1), \Psi\text{Ran}_{\pi\circ u} M); \delta)
\end{eqnarray*}
for any $M \in \T^{(P(L), S)}$; see Remark \ref{rem:the_usual_Ext}. 
It follows that the differential $\delta$ is defined by $\delta(f)(s^{-1}\sigma_1) = \sigma_1f(1)$. 

Let $n$ be the number of vertices of a simplicial complex $L$. Then we define a morphism 
$v: (P(L), S) \to ({\mathcal N}, len)^{\times n}$ of schemoids by $v(\phi) = (0, ...., 0)$ and 
$v(x_i ) =(0, ..., 0, 1, 0, ..., 0)$, where $1$ appears in the $i$th entry. Then the morphism $v$ also defines schemoid cohomology 
$H^*(v ; M)$. 
\end{ex}

\begin{prop}\label{prop:group_Hamming} Let $H(n, 2)$ be the Hamming scheme of binary codes with length $n$. More precisely, 
$H(n, 2)=( \{0, 1\}^{\times n}, \{T_0, T_1, ..., T_n\} )$, 
where $T_i$ denotes the set of the pair of words with the Hamming metric $i$. 
Then schemoids $\widetilde{S}({\mathbb Z}/2)$ and  $\jmath(H(n, 2))$ are Morita equivalent; see Section \ref{Result} for the functor 
$\widetilde{S}( \ )$. 
\end{prop}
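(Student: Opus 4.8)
The plan is to show that the two schemoids are Morita equivalent by exhibiting an equivalence of their associated categories $[\C]$, and then invoking Lemma \ref{lem:isomorphisms} together with Mitchell's embedding theorem. First I would identify the category $[\jmath(H(n,2))]$. For an association scheme $(X,S)$, the schemoid $\jmath(X,S)$ has all identities $id_x$ lying in the single class $1_X \in S$, so the quotient $\{id_x\}/\sim_S$ is a single point; hence $ob[\jmath(H(n,2))]$ is a one-object set, and $[\jmath(H(n,2))]$ is a monoid (a one-object category) whose morphisms are the elements $T_0,\dots,T_n$ of the partition, composed via $\tau\circ\sigma = \mu(\tau,\sigma)$. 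Since $H(n,2)$ is a commutative association scheme with structure constants, this is a well-defined finite monoid. Similarly, $[\widetilde{S}(\Z/2)]$ is computed from Proposition \ref{prop:groupoids}: for the groupoid associated to the group $\Z/2$, the remark following Proposition \ref{prop:groupoids} gives $[\widetilde{S}(\Z/2)]\cong \Z/2$ as a category, i.e.\ the one-object category with morphism monoid the group $\Z/2$.

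The key reduction is that, by Lemma \ref{lem:isomorphisms}, we have isomorphisms $\T^{(\C,S)}\cong\T^{[\C]}$ for any tame schemoid, so it suffices to prove $\T^{[\jmath(H(n,2))]}\simeq\T^{[\widetilde{S}(\Z/2)]}$. Both target categories are functor categories into $\T=R\text{-Mod}$ out of a one-object category, so — provided both schemoids are tame with the relevant structure constants bounded by $1$ — Theorem \ref{thm:Mitchell's_thm} identifies each functor category with a module category over the respective Bose-Mesner algebra (equivalently over $\K[\C]$). Thus the statement would reduce to a Morita equivalence (or even an isomorphism) of the two relevant category algebras $\K[\jmath(H(n,2))]$ and $\K[\widetilde{S}(\Z/2)]\cong \K[\Z/2]$. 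One should first check tameness: $\widetilde{S}(\Z/2)$ is tame by Proposition \ref{prop:groupoids}, while tameness of $\jmath(H(n,2))$ must be verified directly from T(i) and T(iii), using that $H(n,2)$ is a genuine association scheme (so T(i) holds because $1_X\in S$, and T(iii) holds because products of classes decompose into classes).

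The main obstacle is that the structure constants of $H(n,2)$ are \emph{not} all less than or equal to $1$, so Theorem \ref{thm:Mitchell's_thm} does not directly give an equivalence with the Bose-Mesner algebra $\K(\jmath(H(n,2)))$; rather it only gives the embedding into $\K[\jmath(H(n,2))]\text{-Mod}$ together with the general isomorphism $\T^{(\C,S)}\cong\T^{[\C]}$. Hence the real content is to produce an equivalence of the one-object categories $[\jmath(H(n,2))]$ and $[\widetilde{S}(\Z/2)]$, or more precisely a Morita equivalence of their category algebras over $\K$. The natural candidate is the morphism of schemoids induced by the folding map $\{0,1\}^{\times n}\to\{0,1\}$ that collapses the Hamming metric to its parity, giving a morphism $u:\jmath(H(n,2))\to\widetilde{S}(\Z/2)$ of schemoids; one then computes that the induced functor on the associated one-object categories sends each class $T_i$ to the parity $i\bmod 2\in\Z/2$, and this assignment respects composition because the structure constants of $H(n,2)$ that contribute to $T_k$ are exactly those with $i+j\equiv k\pmod 2$. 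I expect the crux to be verifying that this folding genuinely induces an equivalence (not merely a morphism) at the level of functor categories: concretely, that the restriction/Kan-extension machinery of Proposition \ref{prop:equivalence} or Proposition \ref{prop:invariant} can be applied, or alternatively that the two category algebras are isomorphic after identifying $[\jmath(H(n,2))]$ with the two-element monoid $\{[\text{even}],[\text{odd}]\}\cong\Z/2$. The parity computation — checking that the sum of the structure constants over classes of a fixed parity reproduces the group multiplication of $\Z/2$ — is the key calculation that makes the equivalence work.
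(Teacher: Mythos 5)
There is a genuine gap: your reduction to the category $[\jmath(H(n,2))]$ is not available, because $\jmath(H(n,2))$ is \emph{not} tame. Condition T(iii) demands a \emph{unique} class $\mu$ with $p_{\tau\sigma}^{\mu}\geq 1$, and this fails already for $H(2,2)$, where $T_1T_1 = 2T_0 + T_2$ in the Bose--Mesner algebra, so both $T_0$ and $T_2$ occur in $T_1\circ T_1$. Consequently the composition $\tau\circ\sigma=\mu(\tau,\sigma)$ is not well defined, $[\jmath(H(n,2))]$ is not a category, and Lemma \ref{lem:isomorphisms} and Theorem \ref{thm:Mitchell's_thm} cannot be invoked on the Hamming side. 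Your claim that ``T(iii) holds because products of classes decompose into classes'' is exactly where the argument breaks: the decomposition is generally into \emph{several} classes. Indeed, the whole point of this proposition (together with Remark \ref{rem:example_equivalences}(i)) is that the Morita equivalence holds even though $\T^{\jmath(H(2,2))}$ is \emph{not} equivalent to the module category of any of the natural algebras attached to it; your proposed endpoint --- a Morita equivalence of category algebras --- is precisely what the paper shows is false for the Bose--Mesner algebras. Likewise, the suggestion to apply Proposition \ref{prop:equivalence} fails: the natural isomorphism $uv\Rightarrow 1$ on underlying categories has components $\eta_{00}\in T_0$ and $\eta_{11}\in T_2$ with $id_{00}\sim_S id_{11}$, so it does not preserve the partition of identities (this is Remark \ref{rem:example_Hamming_one}).

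Your candidate functors are essentially the right ones --- the parity (folding) map $v:\jmath(H(n,2))\to\widetilde{S}(\Z/2)$ and the inclusion $u:\widetilde{S}(\Z/2)\to\jmath(H(n,2))$ sending $0,1$ to $0\cdots00,\,0\cdots01$ --- but the missing idea is that the equivalence must be verified \emph{directly at the level of the functor categories}, using a rigidity property of objects of $\T^{\jmath(H(n,2))}$: for any such $M$, the schemoid relations force $M(f)^2=M(id)$ and $M(f)^2=M(g)$ for suitable $f\in T_1$, $g\in T_2$ (and inductively for all classes), so $M$ sends every morphism in an even class to a single identity and every morphism in an odd class to a single involution. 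With this collapse in hand one checks by hand that $u^*v^*=1$ and that the identity components define a locally constant natural isomorphism $v^*u^*\Rightarrow 1$. Without that computation, nothing in your outline bridges the gap between the non-tame schemoid $\jmath(H(n,2))$ and the tame one $\widetilde{S}(\Z/2)$.
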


We first consider the case of $H(2,2)$. 
The Hamming scheme gives a schemoid  $(\C, S)=\jmath(H(2,2))$ whose underlying category is pictured by 
the diagram  
$$
\xymatrix@C30pt@R30pt{
{00}  \ar@{<->}[r] \ar@{<.>}[rd]  \ar@{<->}[d] &  {01}  \ar@{<->}[d]\\
{10} \ar@{<->}[r] \ar@{<.>}[ru] &  {11}. 
}
$$
Here white arrows from a vertex to itself,  the black arrows and dots arrows are 
in $T_0$, $T_1$ and $T_2$, respectively. Observe that $S=\{T_0, T_1, T_2\}$. It is readily seen that 
$p_{T_0T_i}^{T_i} = 1 = p_{T_iT_0}^{T_i}$ for any $i$ and that $p_{T_1T_1}^{T_0}= 2$, 
$$
p_{T_1T_2}^{T_0} = 0 = p_{T_2T_1}^{T_0}, p_{T_1T_1}^{T_1} =0, 
p_{T_1T_2}^{T_1} = 1 = p_{T_2T_1}^{T_1}, p_{T_1T_1}^{T_2}= 1, p_{T_1T_2}^{T_2} = 0 = p_{T_2T_1}^{T_2}. 
$$ 
In particular, we have $T_1^2 = 2T_0 + T_2$ in the Bose-Mesner algebra of $H(2.2)$.
On the other hand, the schemoid $(\D, S')= \widetilde{S}({\mathbb Z}/2)$ is described by 
a diagram 
$$
\xymatrix@C30pt@R30pt{
{0}  \ar@<0.5ex>[r]^-{\alpha} &  {1} \ar@<0.5ex>[l]^-{\alpha^{-1}} ,   
}
$$
where the partition of the morphism is given by $\{\{id_0, id_1\}, \{\alpha, \alpha^{-1}\} \}$. 
For an object $M$ in $\T^{\jmath(H(2,2))}$, we see that  
$$M((01 \leftarrow 00))^2= M((00 \leftarrow 01)\circ (01 \leftarrow 00))= M(id_{00}) =id_{M(00)} \ \text{and} $$ 
$$M((01 \leftarrow 00))^2= M((11 \leftarrow 01)\circ (01 \leftarrow 00))= M(11 \leftarrow 00).$$ This yields that 
$M(T_2) =\{id_{M(00)}\}=M(T_0)$.

\begin{proof}[Proof of Proposition \ref{prop:group_Hamming}]
Let $u :  \widetilde{S}({\mathbb Z}/2)\to \jmath(H(n, 2))=:(\C, S)$ be a morphism of schemoids 
defined by $u(0) = 0\cdots 00$ and $u(1) = 0\cdots 01$. Then the morphism induces the functor $u^* : \T^{\jmath(H(2,n))} \to 
\T^{\widetilde{S}({\mathbb Z}/2)}$. 
Let $T_{even}$ and $T_{odd}$ denote $T_i$ for some $i$ even and $T_j$ for some $j$ odd, respectively. 
The same argument as in the case of $H(2,2)$ enables us to deduce that for any object $M$ in $\T^{\jmath(H(2,n))}$, 
$M(f) = M(0\cdots 00 \to 0\cdots 01) = M(0\cdots 01 \to 0\cdots 00)$ if $f$ is in $T_{odd}$ and $M(g) = M(id_{0\cdots 00})$ if $g$ is in $T_{even}$.

We identify the set of objects in $\C$ with ${\mathbb Z}/2^{\times n}$. Let $sgn : {\mathbb Z}/2^{\times n} \to {\mathbb Z}/2$ be a 
homomorphism defined by $sgn(i_1, ..., i_n)=i_1+\cdots +i_n$. 
With the homomorphism, 
we define a functor $v : \jmath(H(n, 2)) \to \widetilde{S}({\mathbb Z}/2)$ by 
$$
v(p) = \begin{cases}
0 & \text{if $sgn(p)=0$},  \\
1 &  \text{if $sgn(p)= 1$} 
\end{cases}
$$
for an object $p$ in $\jmath(H(n, 2))$. Thus it follows that 
$$
v(p \to q) = \begin{cases}
id_0 & \text{if $sgn(p)=0=sgn(q)$},  \\
\alpha &  \text{if $sgn(p)=0$ and $sgn(q)=1$}, \\
\alpha^{-1}  & \text{if $sgn(p)=1$ and $sgn(q)=0$}, \\
id_1 & \text{if $sgn(p)=1=sgn(q)$}.
\end{cases}
$$
We claim that $v$ preserves a partition.  Suppose that $p\to q$ is in $T_i$. Then we have $sgn(q) - sgn(p) =sgn(q - p) = i$ 
in ${\mathbb Z}/2$. This yields that $v(T_{odd}) \subset \{\alpha, \alpha^{-1}\}$ and 
$v(T_{even}) \subset \{id_0, id_1\}$. We see that $v$ is a morphism of schmeoids. 

It is immediate by definition that $v\circ u=1$ and hence $u^*\circ v^*$ is the identity functor on $\T^{\widetilde{S}({\mathbb Z}/2)}$;
see Proposition \ref{prop:equivalence}(i). 
Define a natural isomorphism $\theta : v^*\circ u^* \Rightarrow 1$ by 
$\theta(M)(a) = id_{M(a)}$. We have to verify that  $\theta(M)$ is a morphism in $\T^{\jmath(H(n, 2))}$ for any 
object $M \in \T^{\jmath(H(n, 2))}$. Observe that $\theta(M)$ is locally constant by definition. 
We consider a diagram 
$$
\xymatrix@C25pt@R20pt{
(v^*\circ u^*)M(a) \ar[r]^-{\theta(M)(a)} \ar[d]_{M(u(f'))=(v^*\circ u^*)M(f)}& M(a) \ar[d]^{M(f)}  \\ 
(v^*\circ u^*)M(b) \ar[r]_-{\theta(M)(b)} & M(b),                                                                     
}
$$
where 
$f' = \alpha$ if $f \in T_{odd}$ and $f' = id_0$ if $f \in T_{even}$. We see that $u(f'), f \in T_{odd}$ or 
$u(f'), f \in T_{even}$. It follows that the diagram is commutative and hence 
$\theta(M)$ is a morphism in $\T^{\jmath(H(2,2))}$.  
We claim that $\theta$ gives rise to a natural transformation from $v^*\circ u^*$ to the identity functor on $\T^{\jmath(H(n, 2))}$; that is, we show that  
a diagram 
$$
\xymatrix@C25pt@R20pt{
(v^*\circ u^*)(M) \ar[r]^-{\theta(M)} \ar[d]_{(v^*\circ u^*)(\alpha)}& M \ar[d]^{\alpha} \\
(v^*\circ u^*)(N) \ar[r]_-{\theta(N)} & N    
}
$$
is commutative for any $\alpha : M \to N$ in $\T^{\jmath(H(n,2))}$. To see this, consider a diagram 
$$
\xymatrix@C45pt@R20pt{
(v^*\circ u^*)(M)(a) \ar[r]^-{\theta(M)(a)=id} \ar[d]_-{(v^*\circ u^*)(\alpha)(a)} & M(a) \ar[d]^{\alpha(a)} \\
(v^*\circ u^*(N)(a) \ar[r]_-{\theta(N)(a) = id} & N(a)     
}
$$
for any $a \in ob \C$. 
Since $\alpha$ is locally constant and $id_{uv(a)}\sim_S id_{a}$, it follows that 
$\alpha(uv(a)) =\alpha(a)$ and then $(v^*\circ u^*)(\alpha)(a)=(uv)^*(\alpha)(a) =  \alpha(uv(a))$. 
Thus the diagram is commutative. 
It turns out  that $\widetilde{S}({\mathbb Z}/2)$ and  $\jmath(H(n, 2))$ are Morita equivalent. 
\end{proof}

\begin{rem}\label{rem:example_Hamming_one}
In the observation above, we see that the morphism $v$ of schemoids is an equivalence on the underlying categories with 
$u$ its inverse. In fact, for example, a natural isomorphism $\eta : uv \Rightarrow 1$ on $\jmath(H(n,2))$ 
is determined uniquely with the condition that 
$\eta_{11} : 00 =  uv(11) \to 11$ and  $\eta_{00} : 00 =  uv(00) \to 00$ because an association scheme is regarded as a complete graph. 
However, since $\eta_{00} \in T_0$, $\eta_{11}\in T_2$  and $id_{00}\sim_{S} id_{11}$, it follows that $\eta$ does not preserve 
the partition. Thus Proposition \ref{prop:equivalence} is not applicable to this case. 
  
Observe that an association scheme has an initial object if we consider it a category with the functor 
$\jmath : \mathsf{AS} \to q\mathsf{ASmd}$ and the forgetful functor $U : q\mathsf{ASmd} \to \mathsf{Cat}$ described in 
Section \ref{Result}. Therefore, each association scheme is equivalent to the trivial category as a category.  
\end{rem}

\begin{rem}\label{rem:example_equivalences}(i) The category $\T^{\jmath(H(2,2))}$ is {\it not} equivalent to the module category 
$\K(\jmath(H(2,2)))\text{-Mod}$. 
Suppose that the categories are equivalent. Then the Bose-Mesner algebras $\K(\jmath(H(2,2)))$ and  
$\K\widetilde{S}({\mathbb Z}/2) = \K({\mathbb Z}/2)$ are Morita equivalent. 
In fact, we see that  $\K(\jmath(H(2,2)))\text{-Mod}\simeq T^{\jmath(H(2,2))} \simeq 
T^{\widetilde{S}({\mathbb Z}/2)} \simeq \K({\mathbb Z}/2)\text{-Mod}$. The third equivalence follows from 
Theorem \ref{thm:Mitchell's_thm}. 
The algebra $\K(\jmath(H(2,2)))$ is commutative and a free $\K$-module of rank 
$3$.  On the other hand, the group ring $\K({\mathbb Z}/2)$ is of rank $2$ and commutative. 
Then the $0$th Hochschild homology groups of the two algebras are different from each other, which is a contradiction. \\  
(ii) The underlying categories of the schemoids $\widetilde{S}({\mathbb Z}/4)$ and $\mathcal{K}U(\jmath(H(2,2)))$ are 
the same as that of $\jmath(H(2,2))$. 
However, Theorem \ref{thm:Mitchell's_thm} enables us to deduce that the category $\T^{\widetilde{S}({\mathbb Z}/4)}$ is equivalent 
to the module category $\K({\mathbb Z}/4)\text{-Mod}$. 
The category $\T^{\mathcal{K}U(\jmath(H(2,2)))}$ is equivalent to $\T$. In fact, $\T^{\mathcal{K}U(\jmath(H(2,2)))}$ 
is isomorphic to the functor category 
$\T^{U(\jmath(H(2,2)))}$ because  ${\mathcal{K}U(\jmath(H(2,2)))}$ is tame. 
Moreover, the small category $U(\jmath(H(2,2)))$ is a groupoid with initial object.  
Then ${\widetilde{S}({\mathbb Z}/4)}$,  $\jmath(H(2,2))$ and $\mathcal{K}U(\jmath(H(2,2)))$ are {\it not} 
Morita equivalent one another.  
\end{rem}

A directed complete graph $K_n$ is regarded as a groupoid with initial object. As mentioned in Remark 
\ref{rem:example_equivalences} (ii), the functor category $\T^{\calK(K_n)}$ is equivalent to $M(n; R)\text{-Mod}$ and hence 
to $\T=R\text{-Mod}$. Thus $K_n$ is {\it Morita equivalent} to $R$. In this context, 
Proposition \ref{prop:group_Hamming} asserts that the Hamming schemes give a class of schemoids Morita equivalent to 
the group ring $R({\mathbb Z}/2)$, which is most small of all $R$-algebras bigger than $R$ itself. 

\section{An invariant for Morita equivalence}

In this section, we explore an invariant for Morita equivalence of schemoids such as the Hochschild cohomology of algebras.
We use terminology in \cite{P-N} while notations may be replaced with ours.  

Let $\text{Adj}({\mathcal B}, {\mathcal A})$ be the category of pairs of right and left adjoint functors between abelian categories 
${\mathcal A}_2$ and  ${\mathcal A}_1$. More precisely, the objects are adjoint pairs $(u_1, u_2) : 
\xymatrix@C35pt@R25pt{
{\mathcal B} \ar@<1ex>[r]^-{u_1}_-{\top} 
& {\mathcal A} \ar@<1ex>[l]^-{u_2}  }
$
and morphisms $\alpha=(\alpha_1, \alpha_2) : (u_1, u_2) \to (v_1, v_2)$ are pairs of natural transformations with 
$\alpha_1 : v_1 \Rightarrow u_1$ and $\alpha_2: u_2 \Rightarrow v_2$ which fit 
in the commutative diagram
$$
\xymatrix@C10pt@R20pt{
{\mathcal B}(v_2G, F) \ar[d]_{(\alpha_2G)^*}  \!\!& \cong  & \!\! {\mathcal A}(G, v_1F) \ar[d]^{(\alpha_1F)_*} \\
{\mathcal B}(u_2G, F)  \!\!& \cong & \!\! {\mathcal A}(G, u_1F), 
}
$$
where $G$ and $F$ are objects of ${\mathcal A}$ and ${\mathcal B}$, respectively. 
Let $U : \C_1^{op}\times \C_2 \to \T$ be a bifunctor. Then we define functors
$$\text{-}\otimes_{\C_1}U : \T^{\C_1} \to \T^{\C_2} \ \ \text{and} \ \
(U, \text{-})_{\C_2} : \T^{\C_2} \to \T^{\C_1}$$ by 
$(F\otimes_{\C_1}U)(a)= F\otimes_{\C_1}U(\text{-}, a)$ and $(U, G)_{\C_2}(b)= \text{Hom}_{\T^{(\C_2, S_2)}}(U(b, \text{-}), G)$, respectively. Indeed, we can verify the well-definedness for the functors directly.

\begin{lem} Let $(\C_1, S_1)$ and $(\C_2, S_2)$ be schemoids and $U$ an object 
in the functor category $\T^{(\C_1^{op}\times \C_2, S_1 \times S_2)}$.  
Then the restrictions of the functors  $\text{-}\otimes_{\C_1}U$ and $(U, \text{-})_{\C_2}$ mentioned above to functor 
categories of schemoids give rise to functors 
$$\text{-}\otimes_{\C_1}U : \T^{(\C_1, S_1)} \to \T^{(\C_2, S_2)} \ \ \text{and} \ \
(U, \text{-})_{\C_2} : \T^{(\C_2, S_2)} \to \T^{(\C_1, S_1)}.$$ 
\end{lem}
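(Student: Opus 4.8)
The plan is to check, for each of the two functors, two things: that it carries an object of the source schemoid functor category to an object of the target one (i.e. to a genuine morphism of schemoids into $\T$), and that it carries a locally constant natural transformation to a locally constant one. Throughout I use that, since $U$ lies in $\T^{(\C_1^{op}\times\C_2, S_1\times S_2)}$ and the product partition satisfies $(f,g)\sim_{S_1\times S_2}(f',g')$ exactly when $f\sim_{S_1}f'$ and $g\sim_{S_2}g'$, the arrow $U(f,g)$ of $\T$ depends only on the colors of $f$ and of $g$.

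First I treat $\text{-}\otimes_{\C_1}U$. For $F\in\T^{(\C_1,S_1)}$ write the coend $(F\otimes_{\C_1}U)(a)=\int^{c\in\C_1}F(c)\otimes U(c,a)$, whose functoriality in $a$ is induced summand-wise by the maps $U(id_c,g)\colon U(c,a)\to U(c,a')$ for $g\colon a\to a'$. If $g\sim_{S_2}g'$ then $(id_c,g)\sim_{S_1\times S_2}(id_c,g')$, so $U(id_c,g)=U(id_c,g')$ for every $c$; hence the two maps induced on the coends agree summand-wise, and (granting the identification of source and target coends discussed in the last paragraph) $(F\otimes_{\C_1}U)(g)=(F\otimes_{\C_1}U)(g')$, which is exactly the condition that $F\otimes_{\C_1}U$ be a morphism of schemoids $(\C_2,S_2)\to\T$. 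For a locally constant $\eta\colon F\Rightarrow F'$, the component $(\eta\otimes U)_a$ is induced by $\eta_c\otimes id_{U(c,a)}$; when $id_a\sim_{S_2}id_b$ we have $(\phi,id_a)\sim_{S_1\times S_2}(\phi,id_b)$ for every $\phi$, whence $U(\text{-},a)=U(\text{-},b)$ as functors $\C_1^{op}\to\T$, so the two coends coincide and the components are built from the same data, giving $(\eta\otimes U)_a=(\eta\otimes U)_b$. Thus $\eta\otimes U$ is locally constant and $\text{-}\otimes_{\C_1}U$ restricts as claimed.

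The functor $(U,\text{-})_{\C_2}$ is handled dually. Note first that $U(b,\text{-})$ already lies in $\T^{(\C_2,S_2)}$, since $U(id_b,g)=U(id_b,g')$ whenever $g\sim_{S_2}g'$, so the schemoid Hom defining $(U,G)_{\C_2}(b)$ makes sense. For $\psi\sim_{S_1}\psi'$ in $\C_1$ one has $U(\psi,\text{-})=U(\psi',\text{-})$ as natural transformations, each component being $U(\psi,id_d)=U(\psi',id_d)$ (and this natural transformation is moreover locally constant, so precomposition is defined on schemoid Homs); hence $(U,G)_{\C_2}(\psi)=(U,G)_{\C_2}(\psi')$. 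A locally constant $\nu\colon G\Rightarrow G'$ induces $(U,\nu)_{\C_2}$ by postcomposition, which is locally constant because $id_b\sim_{S_1}id_{b'}$ forces the functor equality $U(b,\text{-})=U(b',\text{-})$ and hence equality of the corresponding Hom-modules.

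The one point that needs care — the main obstacle — is the matching of source and target modules in the object conditions above: asserting $(F\otimes_{\C_1}U)(g)=(F\otimes_{\C_1}U)(g')$ as an arrow of $\T$ presupposes that the domains $(F\otimes_{\C_1}U)(s(g))$ and $(F\otimes_{\C_1}U)(s(g'))$ (and likewise the codomains) coincide, and similarly for $(U,\text{-})_{\C_2}$. The equality $U(id_c,g)=U(id_c,g')$ already forces the objects $U(c,s(g))=U(c,s(g'))$ to agree for all $c$, but the coend depends on $U(\text{-},s(g))$ as a functor, so one must upgrade this to the full functor equality $U(\text{-},s(g))=U(\text{-},s(g'))$ — equivalently $U(\phi,id_{s(g)})=U(\phi,id_{s(g')})$ for all $\phi$. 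This is where the schemoid structure of $\C_2$ (resp. $\C_1$) is used: it holds as soon as $id_{s(g)}\sim_{S_2}id_{s(g')}$, i.e. as soon as morphisms sharing a color have color-equivalent source and target identities, which is condition T(ii) and, by Remark \ref{rem:T(ii)}, holds automatically for unital schemoids. Granting this identification, both restricted functors are well defined, completing the verification.
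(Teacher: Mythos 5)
Your proof is correct and follows essentially the same route as the paper's: a direct objectwise and morphismwise verification using the fact that $U$ collapses the product partition $S_1\times S_2$, so that $U(f,g)$ depends only on the colors of $f$ and $g$. The one place you go beyond the paper is your final paragraph: the paper's proof records only the objectwise equality $U(d,a)=U(d,a')$ and then silently identifies the two coequalizers (resp.\ Hom-modules), whereas you correctly note that the literal equality $(F\otimes_{\C_1}U)(a)=(F\otimes_{\C_1}U)(a')$ needs the functor-level equality $U(\text{-},a)=U(\text{-},a')$, i.e.\ $id_a\sim id_{a'}$ as in condition T(ii) on the relevant factor --- a genuine refinement worth keeping, since T(ii) is automatic for unital schemoids (Remark \ref{rem:T(ii)}) but is not literally among the lemma's hypotheses.
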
 

\begin{proof}
We verify that $\text{-}\otimes_{\C_1}U :  \T^{(\C_1, S_1)} \to \T^{(\C_2, S_2)}$ is well defined. 
Let $f : a\to b$ and $g : a' \to b'$ be morphisms in $\C_2$ with $f\sim_{S_2} g$. Since $U$ is in $\T^{(\C_1^{op}\times \C_2, S_1 \times S_2)}$, it follows that $f_* : U(d, a) \to U(d, b)$ coincides with $g_* : U(d, a') \to U(d, b')$ for any $d \in ob(\C_1)$. 
In particular, $U(d, a) = U(d, a')$ and $U(d, b) = U(d, b')$. Observe that 
by definition, for $F$ in $\T^{(\C_1, S_1)}$ and an object $a$ in $\C_2$, 
$$
\xymatrix@C10pt@R20pt{
(F\otimes_{\C_1}U )(a) = F\otimes_{\C_1}U( \text{-}, a) = \text{coeq}({\displaystyle
\coprod_{\varphi : d \to d'}}Fd'\otimes U(d,a) \ar@<2.0ex>[rr]^-{\varphi^*}\ar@<0.5ex>[rr]_-{\varphi_*}& &
{\displaystyle\coprod_{d}}Fd \otimes U(d, a)
).
}
$$
Then we see that $f_* = g_*  : (F\otimes_{\C_1}U )(a) = (F\otimes_{\C_1}U )(a') \to (F\otimes_{\C_1}U )(b') = (F\otimes_{\C_1}U )(b)$.  
For any objects $a$ and $b$ with $id_a \sim_{\C_2} id_b$, it follows that $U(id_d, id_a) = U(id_d, id_b)$ 
and hence $U(d, a) = U(d, b)$. Thus one obtains a commutative diagram 
$$
\xymatrix@C35pt@R20pt{
F\otimes_{\C_1} U(a) \ar[r]^-{(\eta\otimes_{\C_1} U)_a}\ar@{=}[d]& G\otimes_{\C_1}U(a) \ar@{=}[d]\\
F\otimes_{\C_1} U(b) \ar[r]^-{(\eta\otimes_{\C_1} U)_b}& G\otimes_{\C_1}U(b) 
}
$$
for each morphism $\eta : F \Rightarrow G$ in $\T^{(\C_1, S_1)}$. Hence, $\eta\otimes_{\C_1} U$ is locally constant and then 
it is a morphism in $\T^{(\C_2, S_2)}$. 

For an object $G$ in $\T^{(\C_2, S_2)}$ and a map $f : a \to b$ in $\C_2$, the assignment 
\begin{eqnarray*}
f_*= (f^*)^* : (U, G)_{\C_2}(a) \!\!\!\!\!&=&\!\!\!\!\!\text{Hom}_{\T^{(\C_2, S_2)}}(U(a, \text{-}), G)\\
  &\longrightarrow& \text{Hom}_{\T^{(\C_2, S_2)}}(U(b, \text{-}), G)=(U, G)_{\C_2}(b)
\end{eqnarray*}
is well defined. To see this, suppose that $id_x\sim_{S_2} id_y$. 
Then for any morphism $\eta$ in $\text{Hom}_{\T^{(\C_2, S_2)}}(U(a, \text{-}), G)$, it is readily seen that 
$((f^*)^*\eta)_x = \eta_x f^* = \eta_y f^* =((f^*)^*\eta)_y$ because $\eta$ is locally constant. 
Let $f : a\to b$ and $g : a' \to b'$ be morphisms in $\C_1$ with $f\sim_{S_1} g$. Since 
$f^* = g^* : U(a, \text{-}) = U(a', \text{-})\to U(b, \text{-}) = U(b', \text{-})$, it follows that 
$((f^*)^*\eta)_x = ((g^*)^*\eta)_x$
for 
$\eta$ in $\text{Hom}_{\T^{(\C_2, S_2)}}(U(a, \text{-}), G)$ and $x \in ob(\C_2)$. In order to show 
$(U, \text{-})_{\C_2} : \T^{(\C_2, S_2)} \to \T^{(\C_1, S_1)}$ is a functor, we have to prove that $(U, \eta)_{\C_2}$ is locally constant 
for a morphism $\eta : G \Rightarrow F$. Assume that $id_a \sim_{S_1}id_b$. Then $U(a, \text{-}) = U(b, \text{-})$. 
Therefore,  one has a commutative diagram  
$$
\xymatrix@C40pt@R20pt{
\text{Hom}_{\T^{(\C_2, S_2)}}(U(a, \text{-}), G) \ar[r]^-{(U, \eta)_{\C_2}(a)}\ar@{=}[d]& 
\text{Hom}_{\T^{(\C_2, S_2)}}(U(a, \text{-}), F) \ar@{=}[d]\\
\text{Hom}_{\T^{(\C_2, S_2)}}(U(b, \text{-}), G) \ar[r]_-{(U, \eta)_{\C_2}(b)} &
\text{Hom}_{\T^{(\C_2, S_2)}}(U(b, \text{-}), F) 
}
$$
This completes the proof. 
\end{proof}

\begin{thm}\label{thm:Adj} Let $(\C_1, S_1)$ be a tame schemoid. Then  
$$
\Phi :\T^{(\C_1^{op}\times \C_2, S_1 \times S_2)} \to \text{\em Adj}(\T^{(\C_2, S_2)}, \T^{(\C_1, S_1)})
$$
defined by $\Phi(U) = ((U, \text{-})_{\C_2}, \text{-} \otimes_{\C_1}U)$ is well defined and an equivalence of categories.  
\end{thm}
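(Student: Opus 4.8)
The plan is to adapt the classical Eilenberg--Watts correspondence for functor categories --- the statement in \cite{P-N} that $\T^{\D_1^{op}\times\D_2}$ is equivalent to the category $\text{Adj}(\T^{\D_2},\T^{\D_1})$ of adjoint pairs --- to the present setting, using the tameness of $(\C_1,S_1)$ to replace $\C_1$ by the honest category $[\C_1]$. Since $(\C_1,S_1)$ is tame, Lemma \ref{lem:isomorphisms} gives an isomorphism $\T^{(\C_1,S_1)}\cong\T^{[\C_1]}$ induced by the projection $\pi:\C_1\to[\C_1]$, and a block-constant bifunctor $U\in\T^{(\C_1^{op}\times\C_2,S_1\times S_2)}$ factors through $\pi$ in its first variable to give $\bar U:[\C_1]^{op}\times\C_2\to\T$ which is $S_2$-block-constant in the second variable. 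This identification is what makes the density machinery available: the representables $P_{[x]}=R\langle[\C_1]([x],-)\rangle$ are projective generators of $\T^{[\C_1]}\cong\T^{(\C_1,S_1)}$, and every object is a canonical colimit of them.

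First I would settle well-definedness. The preceding Lemma already shows that $-\otimes_{\C_1}U$ and $(U,-)_{\C_2}$ restrict to functors $\T^{(\C_1,S_1)}\to\T^{(\C_2,S_2)}$ and $\T^{(\C_2,S_2)}\to\T^{(\C_1,S_1)}$, so it remains to produce the adjunction at the schemoid level and to check that $\Phi$ is functorial. For the adjunction I would establish the natural bijection $\T^{(\C_2,S_2)}(F\otimes U,G)\cong\T^{(\C_1,S_1)}(F,(U,G)_{\C_2})$ directly from the coend description of the tensor, computed over $[\C_1]$. Tracing the adjunct $\alpha\leftrightarrow\tilde\alpha$, the block-constancy of $U$ --- the equality $U(id_a,id_b)=U(id_{a'},id_b)$ whenever $id_a\sim_{S_1}id_{a'}$ --- is exactly what forces the $S_2$-local-constancy of $\alpha$ to match the $S_1$-local-constancy of $\tilde\alpha$. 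Functoriality of $\Phi$ and the commutativity of the compatibility square attached to a locally constant $\psi:U\Rightarrow U'$ are then formal.

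For the equivalence I would argue essential surjectivity and full faithfulness separately. Given an adjoint pair $(u_1,u_2)$ with left adjoint $u_2:\T^{(\C_1,S_1)}\to\T^{(\C_2,S_2)}$, I reconstruct a bifunctor by $U([x],b):=u_2(P_{[x]})(b)$, functorial in $[x]\in[\C_1]$ and $b\in\C_2$. Because $u_2(P_{[x]})$ lies in $\T^{(\C_2,S_2)}$ it is $S_2$-block-constant in $b$, and by construction it depends only on $[x]$, so $U\in\T^{(\C_1^{op}\times\C_2,S_1\times S_2)}$. Since $u_2$ is a left adjoint it preserves colimits, and writing an arbitrary object of $\T^{[\C_1]}$ as the canonical colimit of representables, the co-Yoneda computation $P_{[x]}\otimes U\cong U([x],-)$ gives a natural isomorphism $u_2\cong-\otimes_{\C_1}U$; uniqueness of adjoints then yields $u_1\cong(U,-)_{\C_2}$, so $(u_1,u_2)\cong\Phi(U)$. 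Full faithfulness follows because a morphism of adjoint pairs is determined by its left-adjoint component $-\otimes U\Rightarrow-\otimes U'$, and evaluating on the $P_{[x]}$ recovers a locally constant natural transformation $U\Rightarrow U'$ inverse to $\Phi$ on hom-sets.

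The step I expect to be the main obstacle is the verification in the second paragraph that the tensor--hom correspondence genuinely descends to locally constant transformations. The delicacy is that $\T^{(\C_2,S_2)}$ is a \emph{non-full} subcategory of $\T^{\C_2}$ and $(\C_2,S_2)$ is not assumed tame, so one cannot simply restrict an adjunction between the ambient functor categories; the matching of the two local-constancy conditions must be read off from the coend/end formulas and genuinely uses that $U$ is constant on $S_1\times S_2$-blocks. It is precisely here that passing to $[\C_1]$ via tameness is essential: over the honest category $[\C_1]$ the relevant co-Yoneda isomorphisms hold without the multiplicities that the several objects of a single $\sim_{S_1}$-class would otherwise introduce into the coend defining the tensor, and it is this collapse that keeps $-\otimes_{\C_1}U$ left adjoint to $(U,-)_{\C_2}$ at the schemoid level.
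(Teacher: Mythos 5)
Your proposal is correct and follows essentially the same route as the paper: the paper's proof consists of observing that tameness (via Lemma \ref{lem:isomorphisms}) provides the projective representable objects $h^A(B)=\K\text{Hom}_{[\C_1]}([A],[B])$ in $\T^{(\C_1,S_1)}\cong\T^{[\C_1]}$ and then invoking the Palmquist--Newell argument \cite[Corollary 2.2]{P-N} verbatim, which is exactly the density/co-Yoneda reconstruction $U([x],b)=u_2(P_{[x]})(b)$ that you carry out explicitly. Your additional care about the adjunction descending to locally constant transformations is a reasonable unpacking of what the paper leaves implicit in that citation.
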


\begin{proof} For an object $A$ in $(\C_1, S_1)$, we can define a projective object $h^A$ in $\T^{(\C_1, S_1)}$ 
by $h^A(B) = \K\text{Hom}_{[C_1]}([A],[B])$; see Lemma \ref{lem:isomorphisms} and \cite{Mitchell81}. 
Thus, the proof is verbatim the same as that in \cite[Corollary 2.2]{P-N}.
\end{proof}

\begin{thm}\label{thm:Ext}
Let $u : (\C_1, S_1) \to (\C_2, S_2)$, $w : (\C_1, S_1) \to (\D, S')$ and $w' : (\C_2, S_2) \to (\D, S')$ be 
morphisms of schemoids with $w'u = w$. Assume that $(\D, S')$ is tame. 
If $u$ induces an equivalence between $\T^{(\C_2, S_2)}$ and 
$\T^{(\C_1, S_1)}$; that is, $(C_1, S_2)$ and $(C_2, S_2)$ are Morita equivalence, then so are 
$T^{(\D^{op}\times \C_1, S'\times S_1)}$ and $T^{(\D^{op}\times \C_2, S'\times S_2)}$. In consequence, 
for any object $h$ in $\T^{(\D^{op}\times \D, S'\times S')}$, 
one has an isomorphism
$$
\text{\em Ext}^*_{T^{(\D^{op}\times \C_1, S'\times S_1)}}((1\times w)^*h, (1\times w)^*h) \cong 
\text{\em Ext}^*_{T^{(\D^{op}\times \C_2, S'\times S_2)}}((1\times w')^*h, (1\times w')^*h).
$$
\end{thm}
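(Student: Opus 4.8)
The plan is to realize both assertions through the single restriction functor $(1\times u)^*$ and to prove it is an equivalence by transporting the question to categories of adjoint pairs via Theorem~\ref{thm:Adj}. First I would apply that theorem with the tame schemoid $(\D, S')$ playing the role of the first (tame) factor, so that $\C_1^{op}$ becomes $\D^{op}$. For $i=1,2$ this produces equivalences of categories
$$
\Phi_i : \T^{(\D^{op}\times \C_i,\, S'\times S_i)} \longrightarrow \text{Adj}(\T^{(\C_i, S_i)}, \T^{(\D, S')}), \qquad \Phi_i(U) = \big((U, \text{-})_{\C_i},\ \text{-}\otimes_{\D}U\big).
$$
Here only $(\D, S')$ needs to be tame, while the factors $(\C_i, S_i)$ are arbitrary, which is exactly the generality of the theorem.

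Next, using the hypothesis that $u^* : \T^{(\C_2, S_2)} \to \T^{(\C_1, S_1)}$ is an equivalence with quasi-inverse $G$, I would define a functor $\text{Adj}(\T^{(\C_2, S_2)}, \T^{(\D, S')}) \to \text{Adj}(\T^{(\C_1, S_1)}, \T^{(\D, S')})$ by sending an adjoint pair $(a_1, a_2)$ to $(a_1\circ G,\ u^*\circ a_2)$. Composing an adjoint pair with an adjoint equivalence on one side again yields an adjoint pair, and the assignment $(b_1, b_2)\mapsto (b_1\circ u^*,\ G\circ b_2)$ furnishes a quasi-inverse; hence this functor is an equivalence of categories.

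The crucial step is to identify $(1\times u)^*$ with the composite $\Phi_1^{-1}\circ(\text{the above equivalence})\circ\Phi_2$. A direct computation shows that for every $U \in \T^{(\D^{op}\times \C_2, S'\times S_2)}$, every $F \in \T^{(\D, S')}$ and every object $c$ of $\C_1$,
$$
\big(F\otimes_{\D}(1\times u)^*U\big)(c) = F\otimes_{\D}U(\text{-}, u(c)) = \big(F\otimes_{\D}U\big)(u(c)) = u^*\big(F\otimes_{\D}U\big)(c),
$$
so that $\text{-}\otimes_{\D}(1\times u)^*U = u^*\circ(\text{-}\otimes_{\D}U)$. Since an object of the Adj-category is determined up to isomorphism by its component on the tensor functor (the other component being its adjoint, and a morphism of pairs being recovered from its mate), this equality shows that $\Phi_1\circ(1\times u)^*$ and $(\text{the equivalence})\circ\Phi_2$ are naturally isomorphic. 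As $\Phi_1$, $\Phi_2$ and the equivalence of the previous paragraph are all equivalences, I conclude that $(1\times u)^* : \T^{(\D^{op}\times\C_2, S'\times S_2)} \to \T^{(\D^{op}\times\C_1, S'\times S_1)}$ is an equivalence of abelian categories, which by the definition of Morita equivalence proves the first assertion.

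Finally, the Ext-isomorphism follows by exploiting $w = w'u$. This identity gives $1\times w = (1\times w')\circ(1\times u)$, whence $(1\times w)^* = (1\times u)^*\circ(1\times w')^*$ and in particular $(1\times u)^*\big((1\times w')^*h\big) = (1\times w)^*h$. An equivalence of abelian categories induces an equivalence of the derived categories in which these Ext groups are computed — the relevant model structures being those of Theorem~\ref{thm:model_category_str}, obtained from the morphisms $1\times w$ and $1\times w'$ into the tame product schemoid $(\D^{op}\times\D, S'\times S')$ — and therefore preserves Ext. Applying this with $X = Y = (1\times w')^*h$ yields the claimed isomorphism. The step I expect to be the main obstacle is the bookkeeping around the key identity: keeping straight the variances and the partition conventions on $\D^{op}$ and on the product schemoids, and checking that the restriction functor really corresponds, on the Adj-side, to post-composition of the tensor component with $u^*$ (together with the minor but necessary point that an Adj-object, and a morphism of such, is recovered from its single tensor component).
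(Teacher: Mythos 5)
Your proposal is correct and takes essentially the same route as the paper: the published proof is exactly the diagram obtained by applying Theorem \ref{thm:Adj} with the tame factor $(\D,S')$, the induced equivalence $(\eta,u^*)$ on the categories of adjoint pairs (your $(a_1,a_2)\mapsto(a_1\circ G,\,u^*\circ a_2)$), the observation that the resulting square commutes up to natural isomorphism so that $(1\times u)^*$ is an equivalence, and the strictly commutative triangle coming from $w=w'u$ for the Ext statement. You have merely made explicit the tensor-component computation and the mate argument that the paper leaves implicit.
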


\begin{proof}
We have a diagram 
$$
\xymatrix@C15pt@R5pt{ & \T^{(\D^{op}\times \C_1, S'\times S_1)} \ar[r]^-\Phi_-{\simeq} & \text{Adj}(\T^{(\C_1, S_1)}, \T^{(\D, S')})\\ 
\T^{(\D^{op}\times \D, S'\times S')} \ar[ru]^{(1\times w)^*} \ar[rd]_{(1\times w')^*}& \\
  & \T^{(\D^{op}\times \C_2, S'\times S_2)} \ar[uu]_{(1\times u)^*} \ar[r]_-{\Phi}^-{\simeq}& \text{Adj}(\T^{(\C_2, S_2)}, \T^{(\D, S')}) 
  \ar[uu]_{(\eta, u^*)}
}
$$
in which the triangle is commutative and the square is commutative up to natural isomorphism, 
where $\eta$ denotes the inverse to $u^*$. The functor $(\eta, u^*)$ is an equivalence and hence so is $(1\times u)^*$. 
We have the result. 
\end{proof}

\begin{rem}
In Theorem \ref{thm:Ext}, suppose further that $ob[\D]$ is finite. Then Theorem \ref{thm:Mitchell's_thm} allows us to deduce that  $\T^{(\D^{op}\times \D, S'\times S')}$ is equivalent to the module category $\K[\D]^{op}\otimes \K[\D]\text{-Mod}$.  
Therefore, if $(\C_1, S_1)=(\D, S')$, $w$ is identity and $h$ is the functor corresponding to the bimodule $\K[\D]$, then
the Ext group in Theorem  \ref{thm:Ext} is isomorphic to the Hochschild cohomology 
$HH^*(\K[\D])$. 

Let $w : (\C_1, S_1) \to (\D, S')$ be a morphism of schemoids, which induces a Morita equivalence between the schemoids. 
We define a functor $\widetilde{h} : (\D^{op}\times \C_1, S'\times S_1) \to \T$ by 
$\widetilde{h}(a, b) = (id_{[a ,w(b)]})_*\K[\D]$ for any object  $(a, b)$ and  
$\widetilde{h}(f, g) : \widetilde{h}(a, b) = (id_{[a ,w(b)]})_*\K[\D] \to \widetilde{h}(c, d) = (id_{[c ,w(d)]})_*\K[\D]$ by 
$\widetilde{h}(f, g)=(f, w(g))_*$ for $(f, g): (a, b) \to (c, d)$ in $\D^{op}\times \C_1$.  
Then it follows that $(1\times w)^*h = \widetilde{h}$ and hence  
$\text{Ext}^*_{T^{(\D^{op}\times \C_1, S'\times S_1)}}((1\times w)^*h, (1\times w)^*h) \cong  
HH^*(\K[\D])$. 
\end{rem}

\medskip
\noindent
{\it Acknowledgements.}
The authors are grateful to Yasuhide Numata for showing a fundamental idea for constructing  schemoids from posets, 
without which they could not be aware the existence of a new schemoid which an abstract simplicial complex defines. 
The first author would like to thank Dai Tamaki for pointing out that the linear category in 
Remark \ref{Mitchell's_correspondence} is also considered naturally. 
This research was partially supported by a Grant-in-Aid for Scientific Research HOUGA 25610002 
from Japan Society for the Promotion of Science.
 
\section{Appendix 1}
In this section, we construct a quasi-schemoid from a poset. It is mentioned that the construction 
of schemoids described here is generalized to that for a small acyclic category in \cite{Numata}.  

Let $X$ be a set and $\Theta$ a subset of the power set $2^X$. We regard $\Theta$ as a category whose objects are elements of itself and whose morphisms are inclusions. 

\begin{lem}\label{lem:construction}\text{\em (cf. \cite[Lemma 1.1]{Numata})}
Let $D$ be the set consisting of the differences $V\backslash U$ for all morphisms (inclusions) $U \to V$ in $\Theta$. Define 
a partition $S$ of the set of morphisms in $\Theta$ by $S = \{\widetilde{\sigma}\}_{\sigma \in D}$, where 
$\widetilde{\sigma} = \{ i : U \to V \mid V\backslash U = \sigma\}.$ Then $(\Theta, S)$ is schemoid. 
\end{lem}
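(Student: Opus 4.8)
The plan is to verify directly the structure-constant condition in the definition of a schemoid, after confirming that $S$ is a partition. Every morphism of $\Theta$ is an inclusion $U\to V$ and carries exactly one difference $V\setminus U$, so the assignment $(U\to V)\mapsto V\setminus U$ sorts $mor(\Theta)$ into the nonempty, pairwise disjoint, and exhaustive classes $\widetilde{\sigma}$ indexed by $\sigma\in D$; hence $S=\{\widetilde{\sigma}\}_{\sigma\in D}$ is indeed a partition (the identities forming the class $\widetilde{\varnothing}$). It then remains to show, for each triple $\sigma,\tau,\mu\in D$, that $(\pi^{\mu}_{\sigma\tau})^{-1}(h)$ has the same cardinality for all $h\in\widetilde{\mu}$.

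I would translate each fiber into a count of intermediate objects. Fix $h\colon W_1\to W_3$ in $\widetilde{\mu}$, so $W_3\setminus W_1=\mu$. A pair $(f,g)\in(\pi^{\mu}_{\sigma\tau})^{-1}(h)$ is precisely a factorization $h=f\circ g$ through some $W_2\in\Theta$ with $W_1\subseteq W_2\subseteq W_3$, $g\in\widetilde{\tau}$ and $f\in\widetilde{\sigma}$, i.e.\ $W_2\setminus W_1=\tau$ and $W_3\setminus W_2=\sigma$. From $W_1\subseteq W_2$ and $W_2\setminus W_1=\tau$ I get $W_2=W_1\cup\tau$ with $\tau\cap W_1=\varnothing$; combining with $W_2\subseteq W_3=W_1\sqcup\mu$ forces $\tau\subseteq\mu$ and then $W_3\setminus W_2=\mu\setminus\tau$, so the condition $W_3\setminus W_2=\sigma$ becomes $\sigma=\mu\setminus\tau$. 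Thus the intermediate object is uniquely determined as $W_2=W_1\cup\tau$, and it is admissible exactly when $\tau\subseteq\mu$, $\sigma=\mu\setminus\tau$, and $W_1\cup\tau\in\Theta$. In particular each fiber has cardinality $0$ or $1$.

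The constraints $\tau\subseteq\mu$ and $\sigma=\mu\setminus\tau$ depend only on the colors, not on $h$, so everything reduces to a single delicate point, which I expect to be the main obstacle: the membership $W_1\cup\tau\in\Theta$ must hold uniformly over all representatives $h$ of $\mu$, since otherwise two morphisms of color $\mu$ would give fibers of cardinalities $1$ and $0$. This uniform membership is exactly where I would invoke that $\Theta$ is an abstract simplicial complex, i.e.\ closed under taking subsets: from $W_1\cup\tau\subseteq W_3\in\Theta$ closedness yields $W_1\cup\tau\in\Theta$ for every such $h$. It follows that for fixed $\sigma,\tau,\mu$ the fiber cardinality is the constant $p^{\mu}_{\sigma\tau}=1$ when $\mu=\sigma\sqcup\tau$ and $0$ otherwise, independently of $h\in\widetilde{\mu}$; this is the schemoid condition and simultaneously shows that all structure constants are at most $1$. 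I would stress that closedness under subsets is load-bearing here: dropping it, the interval $[W_1,W_3]_{\Theta}$ of a morphism of color $\mu$ may fail to be the full Boolean interval, and the candidate $W_1\cup\tau$ can lie in $\Theta$ for one representative of $\mu$ but not another, breaking the equicardinality of the fibers.
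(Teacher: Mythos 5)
Your fiber computation is the same as the paper's: the paper's proof likewise observes that a factorization $i=k\circ l$ with $l\in\widetilde{\tau}$ and $k\in\widetilde{\mu}$ forces the intermediate object to be $W=U\sqcup\tau$, hence is unique and forces $\sigma=\tau\sqcup\mu$, and from this concludes that $p^{\widetilde{\sigma}}_{\widetilde{\mu}\widetilde{\tau}}$ equals $1$ if $\sigma=\tau\sqcup\mu$ and $0$ otherwise. Where you go beyond the paper is precisely the point you call delicate: the paper's argument only bounds each fiber by one element and never addresses whether the unique candidate $U\cup\tau$ actually belongs to $\Theta$ uniformly over all representatives of a colour. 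You are right that this is the entire content of the schemoid condition here, and right that it is not automatic. In fact the lemma as literally stated, for an arbitrary subset $\Theta\subseteq 2^X$, is false: take $\Theta=\{\varnothing,\{1\},\{1,2\},\{3\},\{1,2,3\}\}$. Then $\varnothing\to\{1,2\}$ and $\{3\}\to\{1,2,3\}$ both lie in $\widetilde{\{1,2\}}$, but only the first factors as a morphism of colour $\widetilde{\{1\}}$ followed by one of colour $\widetilde{\{2\}}$, because the required intermediate object $\{1,3\}$ is not in $\Theta$; the two fibers of $\pi^{\widetilde{\{1,2\}}}_{\widetilde{\{2\}}\widetilde{\{1\}}}$ therefore have cardinalities $1$ and $0$.

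So your proof establishes a corrected statement rather than the stated one: the hypothesis that $\Theta$ is closed under passing to subsets (so that $W_1\cup\tau\subseteq W_3\in\Theta$ implies $W_1\cup\tau\in\Theta$) does not appear in the lemma, which assumes only $\Theta\subseteq 2^X$. Your repair is correct and covers the paper's main application, the face poset of an abstract simplicial complex (with $\varnothing$ adjoined). Be aware, though, that the paper also invokes this lemma for the category of open sets of a topological space, which is not closed under subsets; there the uniform existence of the intermediate object needs a different argument (for a finite space one can check, using minimal open neighbourhoods of the points of $\tau$, that whether $U\cup\tau$ is open depends only on the colours $\mu$ and $\tau$ and not on the representative). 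In summary: your uniqueness analysis coincides with the paper's, your identification of the load-bearing step exposes a genuine gap in the paper's own proof, and the closure hypothesis you import is not optional decoration but a necessary correction to the statement.
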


\begin{proof}
Let $i : U \to V$ be in $\widetilde{\sigma}$. Suppose that 
$i = k\circ l$ for $l : U \to W$ and $k : W \to V$. If $l\in \widetilde{\tau}$ and $k \in \widetilde{\mu}$, then $W = U\sqcup \tau$ and 
$V = W \sqcup \mu$ and hence $\sigma =  \tau \sqcup \mu$. Assume further that $i = k'\circ l'$, where 
$l' : U \to W' \in \widetilde{\tau}$ and $k' : W' \to V \in \widetilde{\mu}$. Then $W = U \sqcup \tau = W'$. Thus $l=l'$ and $k=k'$. This implies that 
$$
p^{\widetilde{\sigma}}_{\widetilde{\mu} \widetilde{\tau}}= \begin{cases}
1 & \text{if $\sigma =  \tau \sqcup \mu$}  \\
0 & \text{otherwise}. 
\end{cases}
$$
We have the result. 
\end{proof}

\begin{rem}\label{rem:height_functor}
Let $(\Theta, S)$ be the schmemoid mentioned above obtained by a set $X$. 
Suppose that $X$ is finite. Then we define a functor 
$u : \Theta \to {\mathcal N}$ by $u(U)= \sharp U$. 
It is readily seen that $u$ gives rise to a morphism 
$u : (\Theta, S) \to ({\mathcal N}, len)$ of schemoids. Thus Theorem \ref{thm:model_category_str}
allows us to give a model category structure to the category of chain complexes $Ch(\T^{(\Theta, S)})$. 
Thus we have a derived category of the form $\text{D}(\T^{(\Theta, S)})$. 
\end{rem}

Let $K$ be an abstract simplical complex and $P(K)$ its face poset. We consider the face poset 
a small category with an initial object $\varnothing$ as usual. Let $S_K$ be a partition of the set of morphisms defined by 
$$S_K = \{\widetilde{\sigma}\}_{\sigma \in K \cup \{\varnothing\}},$$ where 
$ \widetilde{\sigma} = \{ \alpha : \mu \to \tau \mid \tau\backslash \mu = \sigma\}$. Lemma \ref{lem:construction} yields that 
$(P(K), S_K)$ is a schemoid. It is called the {\it quasi-schemoid associated with} $K$. 

\begin{rem}\label{rem:not_tame}
Consider the simplicial complex $K$ with two vertices $1$, $2$ and no $1$-simplex. 
With the notation in Section 3,  we have a sequence  $[\varnothing] \stackrel{\widetilde{\{1\}}}{\longrightarrow} [\{1\}] = [\varnothing]
\stackrel{\widetilde{\{2\}}}{\longrightarrow} [\{2\}]$ in the diagram $[P(K)]$.  Since $t(\alpha) = \varnothing 
\neq s(\beta)$ for any $\alpha \in \widetilde{\{1\}}$ and $\beta \in \widetilde{\{2\}}$, it follows that 
$P(K)$ is not tame.
\end{rem}

We call a simplicial map $f : K \to L$ {\it non-degenerate} if $f(i) \neq f(j)$
whenever $\{i, j\}$ in $K$; see \cite[Definition 2.7]{B-P}.

\begin{lem} \label{lem:morphisms}
Let $f : K\to L$ be a simplicial map. 
Then the poset map $P(f) : P(K) \to P(L)$  
induces a well-defined morphism 
$P(f) : (P(K), S_K) \to (P(L), S_L)$ of schemoids if and only 
if $f$ is componentwise non-degenerate or constant.   
\end{lem}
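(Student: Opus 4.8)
The plan is to translate the schemoid-morphism condition into a concrete combinatorial statement about $f$ and then verify it component by component. By the definition of a morphism of schemoids, $P(f)$ is one precisely when each block $\widetilde{\sigma}\in S_K$ is carried into a single block of $S_L$; since the image of an inclusion $\mu\to\tau$ is the inclusion $f(\mu)\to f(\tau)$, whose $S_L$-class is recorded by the difference $f(\tau)\setminus f(\mu)$, this amounts to requiring that $f(\tau)\setminus f(\mu)$ depend only on $\tau\setminus\mu$ and not on the chosen inclusion. The first observation I would record is that every inclusion with a fixed nonempty difference $\sigma$ has both of its faces containing $\sigma$, hence lies in the connected component of $K$ determined by $\sigma$; consequently the condition decouples over connected components, and it suffices to analyse $f$ on a single component $C$.

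For sufficiency I would treat the two cases separately. If $f$ is non-degenerate on $C$, then $f$ is injective on every face (the vertices of a face are pairwise adjacent), so for $\mu\subseteq\tau$ one has $f(\tau)\setminus f(\mu)=f(\tau\setminus\mu)$, which depends only on the difference; hence each block maps into a single block. If $f$ is constant on $C$ with value $v$, then every inclusion between faces of $C$ collapses to the identity of $\{v\}$, so the blocks supported on $C$ map into the single block of identities. In either case $P(f)$ respects the partition.

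For necessity I would argue by contraposition: assuming some component $C$ is neither non-degenerate nor constant, I produce a single block whose image meets two distinct blocks of $S_L$. Failure of non-degeneracy yields an edge $\{a,b\}$ with $f(a)=f(b)=:w$. Walking out from $a$ along vertices of color $w$, and using that $C$ is connected while $f$ is non-constant, I obtain a color boundary: an edge $\{y,z\}$ with $f(y)=w$ and $f(z)\neq w$, where moreover $y$ still has a neighbour $y'$ with $f(y')=w$ (a predecessor on the path, or $y'=b$ when $y=a$). Then the inclusions $\{y'\}\to\{y',y\}$ and $\{z\}\to\{z,y\}$ both lie in the block $\widetilde{\{y\}}$, yet their images have differences $\varnothing$ and $\{w\}$ respectively, so they fall into different blocks of $S_L$; this shows $P(f)$ is not a morphism of schemoids.

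The hard part will be this necessity direction, and specifically the combinatorial step of exhibiting one difference-class $\widetilde{\{y\}}$ that simultaneously witnesses a collapse (an inclusion sent to an identity) and a genuine extension (an inclusion sent to a non-identity). Extracting this from the bare hypotheses ``degenerate somewhere, non-constant somewhere'' requires propagating the color $w$ along a path to its boundary, so that the collapsing and non-collapsing inclusions add the \emph{same} vertex $y$ and hence genuinely lie in a common block; once that configuration is in hand, the remainder is routine bookkeeping with the differences $f(\tau)\setminus f(\mu)$.
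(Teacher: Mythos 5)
Your sufficiency argument in the non-degenerate case is exactly the paper's: injectivity of $f$ on each face gives $f(\tau)\setminus f(\mu)=f(\tau\setminus\mu)$, so $\widetilde{\sigma}$ is carried into $\widetilde{f(\sigma)}$. The necessity direction is where you genuinely diverge from the paper, and the divergence is instructive. The paper does not walk a path to a color boundary at all: given a degenerate edge $\{i,j\}$ with $f(i)=f(j)$, it simply notes that $\alpha_1:\varnothing\to\{i\}$ and $\alpha_2:\{j\}\to\{i,j\}$ both lie in $\widetilde{\{i\}}$, while $P(f)(\alpha_1)\in\widetilde{\{f(i)\}}$ and $P(f)(\alpha_2)\in\widetilde{\varnothing}$. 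This is two lines, because the paper's face poset contains the initial object $\varnothing$, so $\varnothing\to\{i\}$ is available and always maps to a non-identity. Your boundary-edge construction (the block $\widetilde{\{y\}}$ containing $\{y'\}\to\{y',y\}$ and $\{z\}\to\{z,y\}$ with image differences $\varnothing$ and $\{w\}$) is correct, but the ``hard part'' you flag is only hard because you refuse to use morphisms out of $\varnothing$; with $\varnothing$ admitted it evaporates, and non-constancy is never needed.

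The genuine gap is in your constant case of sufficiency, and it is created by precisely the morphism you avoided above. You claim that for $f$ constant on $C$ with value $v$, ``every inclusion between faces of $C$ collapses to the identity of $\{v\}$.'' But $\varnothing\to\{i\}$ lies in the block $\widetilde{\{i\}}$ and is sent to $\varnothing\to\{v\}$, which is not an identity; if $C$ also contains an edge $\{i,j\}$, then $\{j\}\to\{i,j\}$ lies in the \emph{same} block and \emph{is} sent to an identity. So $\widetilde{\{i\}}$ meets both $\widetilde{\{v\}}$ and $\widetilde{\varnothing}$, and $P(f)$ fails to be a morphism of schemoids whenever a constant component contains a $1$-simplex --- the constant alternative in the statement is not actually sufficient in the paper's setup. (In fairness, the paper dismisses this case as ``immediate'' and has the identical problem; indeed its own two-morphism trick for necessity applies verbatim to a constant component with an edge and contradicts its own sufficiency claim. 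The statement is only coherent if the empty face is excluded from $P(K)$, which is the convention your necessity argument tacitly adopts but not the one the paper declares.) To repair your write-up you must either restrict the ``constant'' alternative to single-vertex components, or state explicitly that $\varnothing$ is omitted from the face poset --- in which case your path argument for necessity becomes not merely valid but necessary.
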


\begin{proof}
Assume that $f$ is componentwise non-degenerate or constant. Let $\widetilde{\sigma}$ be an element in $S_K$ 
with $\sigma \neq \varnothing$. We show that  
there exists an element $\widetilde{\sigma'}$ in $S_L$ such that $P(f)(\widetilde{\sigma})$ is included in $\widetilde{\sigma'}$. 
Observe that there is a unique connected component $K_0$ such that $\sigma \in K_0$. Then the assertion we claim is immediate for the case $f$ is constant on $K_0$. 
Consider the case where $f$ is non-degenerate on $K_0$. 
For an element $\alpha : \mu \to \tau$ in $\widetilde{\sigma}$, we see that $\tau = \mu \sqcup \sigma$.
If $f(\mu) \cap f(\sigma) \neq \varnothing$, then there exist elements $i \in \mu$ and $j \in \sigma$ such that $f(i)=x=f(j)$ 
for some $x$ in $f(\mu) \cap f(\sigma)$, which is a contradiction. 
This implies that $f(\tau)=f(\mu \sqcup \sigma)=f(\mu) \sqcup f(\sigma)$.
Therefore,  we have $P(f)(\sigma) \subset \widetilde{f(\sigma)}$.

Assume that $f$ is not non-degenerate. If $f$ is not constant on a connected component $K_0$, then $|K_0| \ge 2$.
Since $f$ is not non-degenerate on $K_0$, there exists an element $\{ i, j \} \in K$ such that $f(i) = f(j)$.
The morphisms $\alpha_1 : \varnothing \to \{ i \}$ and $\alpha_2 : \{ j \} \to \{ i, j \}$ in $P(K)$ are in the partition $\widetilde{\{ i \}}$.
It follows that $P(f)(\alpha_1)$ belongs to $\widetilde{\{ f(i) \}}$ while $P(f)(\alpha_2)$ is in $\widetilde{\varnothing}$.
Hence the poset map $P(f)$ is not a morphism of schemoids. We have the result.
\end{proof}

The following result describes a strong connection between the Stanley-Reisner ring of a simplicial complex and the Bose-Mesner algebra of the schemoid associated with the simplicial complex.  

\begin{prop}\label{prop:B-MvsS-R} Let $K$ be a finite simplicial complex. Then there exists an isomorphism 
$\alpha_K : \K[K]/(x_i^2) \stackrel{\cong}{\to} \K(P(K), S_K)$ such that $\alpha_K(x_i) =\widetilde{\{i\}}$. 
\end{prop}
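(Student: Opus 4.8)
The plan is to construct the isomorphism $\alpha_K$ explicitly on generators and then verify that it respects the multiplicative structure on both sides, exploiting the clean combinatorial description of the structure constants from Lemma \ref{lem:construction}. Recall that the Stanley-Reisner ring $\K[K]$ is the quotient of the polynomial ring $\K[x_1,\dots,x_n]$ (one variable per vertex) by the Stanley-Reisner ideal generated by the monomials $x_{i_1}\cdots x_{i_k}$ corresponding to non-faces; after further quotienting by $(x_i^2)$, a $\K$-basis for $\K[K]/(x_i^2)$ is given by the squarefree monomials $x_\sigma := \prod_{i\in\sigma}x_i$ as $\sigma$ ranges over the faces $\sigma \in K$ (together with $1 = x_\varnothing$). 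On the other side, by the definition of the Bose-Mesner algebra $\K(P(K),S_K)$, a $\K$-basis is given by the elements $\widetilde{\sigma}_{\mathrm{sum}} := \sum_{\alpha\in\widetilde{\sigma}}\alpha$ for $\sigma \in K\cup\{\varnothing\}$, where $\widetilde{\sigma}$ is the partition block attached to the face $\sigma$.

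First I would define $\alpha_K$ as the $\K$-linear map sending the basis element $x_\sigma$ to $\widetilde{\sigma}_{\mathrm{sum}}$ (so in particular $x_i \mapsto \widetilde{\{i\}}$ and $1 \mapsto \widetilde{\varnothing}$, the identity-element block). Since both sides are free $\K$-modules with bases indexed by the same set $K\cup\{\varnothing\}$, this $\alpha_K$ is manifestly a $\K$-linear isomorphism of modules; the content is entirely in checking it is a ring homomorphism. For this I would compute the product of two basis elements on each side. In $\K[K]/(x_i^2)$ one has
$$
x_\tau \cdot x_\mu = \begin{cases} x_{\tau\sqcup\mu} & \text{if } \tau\cap\mu=\varnothing \text{ and } \tau\sqcup\mu \in K,\\ 0 & \text{otherwise,}\end{cases}
$$
the vanishing coming either from an $x_i^2$ factor (when $\tau\cap\mu\neq\varnothing$) or from the Stanley-Reisner relation (when $\tau\sqcup\mu$ is a non-face). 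On the schemoid side, the product $\widetilde{\mu}_{\mathrm{sum}}\cdot\widetilde{\tau}_{\mathrm{sum}}$ is governed by the structure constants computed in Lemma \ref{lem:construction}, namely $p^{\widetilde{\sigma}}_{\widetilde{\mu}\widetilde{\tau}} = 1$ exactly when $\sigma = \tau\sqcup\mu$ (a disjoint union that is again a face, so that the relevant inclusions actually exist in $P(K)$) and $0$ otherwise. Matching these two cases shows $\alpha_K(x_\tau)\alpha_K(x_\mu) = \alpha_K(x_\tau x_\mu)$.

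The step I expect to require the most care is reconciling the two ways a product can vanish. I would need to argue that the disjoint-union condition $\tau\sqcup\mu\in K$ appearing in the schemoid structure constant corresponds precisely to $x_{\tau\sqcup\mu}$ being a nonzero monomial in $\K[K]/(x_i^2)$, i.e.\ that $\tau\sqcup\mu$ being a non-face forces $\widetilde{\sigma}$ to be empty (no inclusion $\mu\to\tau\sqcup\mu$ exists in the face poset when $\tau\sqcup\mu\notin K$), hence $p^{\widetilde{\sigma}}_{\widetilde{\mu}\widetilde{\tau}}=0$ for all $\widetilde{\sigma}$; and symmetrically that the overlap condition $\tau\cap\mu\neq\varnothing$ is exactly what prevents a disjoint-union decomposition in Lemma \ref{lem:construction}. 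Once these combinatorial equivalences are pinned down, unitality ($\alpha_K(1)=\widetilde{\varnothing}_{\mathrm{sum}}$ is the identity of the Bose-Mesner algebra, being the sum of all identity morphisms) finishes the verification that $\alpha_K$ is an isomorphism of $\K$-algebras.
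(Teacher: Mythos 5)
Your proof is correct, and it reaches the conclusion by a slightly different route than the paper. The paper defines an algebra epimorphism $\alpha' : \K[x_j] \to \K(P(K), S_K)$, $x_j \mapsto \widetilde{\{j\}}$, checks that the Stanley-Reisner relations and the squares $x_i^2$ are sent to zero so that $\alpha'$ descends to an epimorphism $\alpha_K$, and then concludes injectivity by observing that both sides are free $\K$-modules whose rank equals the number of simplices of $K$ (including $\varnothing$). You instead set up the bijection of bases first, so that bijectivity comes for free, and pay for it by verifying multiplicativity on all pairs of basis elements, including the non-vanishing case $x_\tau x_\mu = x_{\tau\sqcup\mu}$; both verifications ultimately reduce to the same structure-constant computation $p^{\widetilde{\sigma}}_{\widetilde{\mu}\widetilde{\tau}} \in \{0,1\}$ from Lemma \ref{lem:construction}, and your reconciliation of the two vanishing mechanisms (shared vertex versus non-face) is exactly the point that needs care. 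Your route has the minor advantage of making explicit the generation statement that the paper only asserts (that the $\widetilde{\{i\}}$ generate the Bose-Mesner algebra is precisely your product formula for faces), while the paper's route avoids the case analysis in the nonzero case by starting from the universal property of the polynomial ring. One point worth stating explicitly in your version: the elements $\sum_{\alpha\in\widetilde{\sigma}}\alpha$ form a basis, and not merely a generating set, of $\K(P(K), S_K)$, because they are supported on pairwise disjoint nonempty sets of morphisms and their $\K$-span is already closed under multiplication by the defining identity of the Bose-Mesner algebra.
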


\begin{proof}
Since $\K(P(K), S_K)$ is generated by the elements $\widetilde{\{i\}} \in S_K$. We define epimorphism 
of algebras $\alpha' : \K[x_j] \to \K(P(K), S_K)$ by $\alpha'(x_j) = \widetilde{\{j\}}$. Suppose that $\{{i_1}, ...., {i_l}\}$ is not in 
$K$. We then see that $\widetilde{\{{i_1}\}}\cdots \widetilde{\{{i_l}\}} = 0$ in the category algebra $\K P(K)$ and hence in 
the Bose-Mesner algebra $\K(P(K), S_K)$. Moreover, we have $\widetilde{\{i\}}^2= 0$ in $\K(P(K), S_K)$. This yields that 
$\alpha'$ induces a well-defined epimorphism $\alpha_K$ of algebras. It is readily seen that $\text{rank} \K[K]/(x_i^2)$ and 
$\text{rank} \K(P(K), S_K)$ coincide with the number of the simplexes in $K$. This completes the proof. 
\end{proof}

Proposition \ref{prop:B-MvsS-R} allows us to deduce the following result. 

\begin{assertion} \label{assertion:simplicial_complexes} Let $K$ and $L$ be finite simplicial complexes. 
Then the following conditions are equivalent.
\begin{itemize}
\item[(i)] $K$ is isomorphic to $L$ as a simplicial complex.
\item[(ii)] $(P(K), S_K)$ is isomorphic to $(P(L), S_L)$ as a quasi-schemoid. 
\item[(iii)] $(P(K), S_K)$ is homotopy equivalent to $(P(L), S_L)$. 
\item[(iv)] The Bose-Mesner algebra of $(P(K), S_K)$ is isomorphic to that of $(P(L), S_L)$ as an algebra. 
\item[(v)] The Stanley-Reisner ring of $K$ is isomorphic to that  of $L$ as an algebra.  
\end{itemize}
\end{assertion}

\begin{proof}
The Bose-Mesner algebra $\K(P(K), S_K)$ is isomorphic to $\K[K]/(x_i^2)$ 
as an algebra. Thus it follows from the results Proposition \ref{prop:B-MvsS-R} and  \cite[Example 5.28, page 178]{B-G} that 
the conditions (i), (ii), (iv) and (v) are equivalent. 

We consider the implication from (iii) to (ii). In order to prove the claim, it suffices to show that if
$F \sim 1$, then $F=1$ for a morphism $F : (P(K), S_K) \to (P(K), S_K)$;  
see Definition \ref{defn:Homotopy}. 
Let $H :  (P(K), S_K) \times I \to (P(K), S_K)$ be a homotopy from $F$ to $1$. 
Then we have a commutative diagram
$$
\xymatrix@C35pt@R25pt{
{H(s(\varphi), 0)}  \ar@{->}[r]^{H(1_{s(\varphi)}, u)} \ar[dr]^{H(\varphi,u)}   \ar@{->}[d]_{F(\varphi)=H(\varphi, 1_0)}  &  {H(s(\varphi), 1)}  \ar@{->}[d]^{H(\varphi, 1_1)=1(\varphi)=\varphi}\\
{H(t(\varphi), 0)} \ar@{->}[r]_{H(1_{t(\varphi)}, u)}  &  {H(s(\varphi), 1)}
}
$$
for a morphism $\varphi$ in $P(K)$, where $u :  0 \to 1$ is the only non-trivial morphism in the small category $I$ with objects $0$ and 
$1$.  Since $1_{s(\varphi)}$ and $1_{t(\varphi)}$ are in $\widetilde{\varnothing}$, it follows that 
$H(1_{s(\varphi)}, u)\sim_{S_K} H(1_{t(\varphi)}, u)$. Let $\varphi : \varnothing \to \{i\}$ be a morphism in $P(K)$. The diagram gives a commutative one 
$$
\xymatrix@C35pt@R25pt{
F(\varnothing)  \ar@{->}[r]^{H(1_{s(\varphi)}, u)}  \ar@{->}[d]_{F(\varphi)} &  \varnothing \ar@{->}[d]^{\varphi}\\
F(\{i\}) \ar@{->}[r]_{H(1_{t(\varphi)}, u)} &  \{i\}.}
$$
Since $\varnothing$ is an initial object, we have $F(\varnothing)=\varnothing$. Then 
$H(1_{s(\varphi)}, u) = id$ and hence $H(1_{t(\varphi)}, u) = id$.  This yields that $F(\varphi) = \varphi$. 
By induction on the cardinality of a simplex, we see that $F(\psi) = \psi$ for a morphism  
$\psi : \{i_1, ..., i_s\} \to \{i_1, ..., i_s, i_{s+1}\}$ in $P(K)$. 
Since $F$ is a functor, it follows that $F(\varphi)= \varphi$ for any morphism $\varphi$ in $P(K)$.

Assume that there exists a homotopy $H :  (P(K), S_K) \times I \to (P(K), S_K)$ from $1$ to $F$. 
Let $\varphi : \varnothing \to \{j\}$ be a map in $K(P)$. We have a commutative diagram 
$$
\xymatrix@C35pt@R25pt{
\varnothing  \ar@{->}[r]^-{H(1_{s(\varphi)}, u)}  \ar@{->}[d]_{\varphi} &  F(\varnothing) \ar@{->}[d]^{F(\varphi)}\\
\{j\} \ar@{->}[r]_-{H(1_{t(\varphi)}, u)} &  F(\{j\}). }
$$
Suppose that $F(\varnothing) \neq \varnothing$. Then $H(1_{s(\varphi)}, u)$ is not identity. Let $\tau$ be a maximal simplex in 
$P(K)$ and $\psi : \varnothing \to \tau$ a map in $P(K)$. We obtain a commutative diagram 
$$
\xymatrix@C35pt@R25pt{
\varnothing  \ar@{->}[r]^-{H(1_{s(\psi)}, u)}  \ar@{->}[d]_{\psi} &  F(\varnothing) \ar@{->}[d]^{F(\psi)}\\
\tau\ar@{->}[r]_-{H(1_{t(\psi)}, u)} &  F(\tau). }
$$
Since $1_{s(\varphi)} \sim_{S_K} 1_{s(\psi)} \sim_{S_K} 1_{t(\psi)}$, we see that  $H(1_{t(\psi)}, u)$ is not identity. 
Therefore, $\tau \lneq F(\tau)$, which is a contradiction. Thus it follows that $F(\varnothing) =\varnothing$. 
By the same argument as in the case $F\sim 1$, namely by induction on the cardinality of a simplex, we have $1 = F$.  
\end{proof}

For a simplicial map $\varphi: K \to L$, 
we define a map $\varphi^* : \K[L] \to \K[K]$ between the Stanley-Reisner rings by 
$$
\varphi^*(y_j) = \sum_{i \in \varphi^{-1}(j)} x_i
$$ 
where $x_i$ and $y_j$ are generaters of  $\K[K]$ and $\K[L]$, respectively.  We see that $\varphi^*$ is a well-defined algebra map; see \cite[Proposition 3.4]{B-P} for example. Moreover, it follows that $\varphi^*$ induces a well-defined algebra map 
$$
\overline{\varphi^*} : \K[L]/(y_j^2) \to \K[K]/(x_i^2)
$$
if $\varphi$ is non-degenerate. In fact, we see that $\varphi^*(y_j^2)= \sum_{k,l \in \varphi^{-1}(j)}x_kx_l$. Suppose that $\{k, l\} \in L$ for some $k, l \in \varphi^{-1}(j)$. Then $\varphi(k) = \varphi(l)$, which is a contradiction because $\varphi$ is a non-degenerate simplicial map. Thus $\{k, l\}$ is not in $K$ for any $i, j \in \varphi^{-1}(j)$ and then $\varphi^*(y_j^2)=0$ in $\K[K]/(x_i^2)$. 

Let $\varphi : K \to L$ be a non-degenerate simplicial map.  Lemma \ref{lem:morphisms} implies that 
$P(\varphi) : (P(K), S_K) \to (P(L), S_L)$ is a morphism of quasi-schemoids. 
We define a map $P(\varphi)^* : \K(P(L), S_L) \to \K(P(K), S_K)$ of $\K$-modules by 
$$
P(\varphi)^*(\widetilde{\tau}) = \sum_{P(\varphi)(\widetilde{\sigma}) = \widetilde{\tau}, 
\widetilde{\sigma} \in S_K} \widetilde{\sigma}.
$$

\begin{prop} Let $\alpha_K$ and $\alpha_L$ be the isomorphisms described in Proposition \ref{prop:B-MvsS-R}. 
Then the map $P(\varphi)^*$ is a morphism of algebras and the diagram 
$$
\xymatrix@C35pt@R25pt{
\K(P(K), S_K) & \K[K]/(x_i^2) \ar[l]^-{\cong}_-{\alpha_K} \\
\K(P(L), S_L)\ar[u]^{P(\varphi)^*} & \K[L]/(y_j^2)\ar[u]_{\varphi^*}  \ar[l]_-{\cong}^-{\alpha_L} 
}
$$ 
is commutative.  
\end{prop}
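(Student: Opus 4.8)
The plan is to prove both assertions at once by reducing everything to the already-established algebra isomorphisms $\alpha_K$ and $\alpha_L$. First I would record how $\alpha_L$ behaves on a convenient $\K$-basis. The squarefree monomials $y_\tau := \prod_{j\in\tau} y_j$, indexed by $\tau \in L\cup\{\varnothing\}$ (with $y_\varnothing = 1$), form a $\K$-basis of $\K[L]/(y_j^2)$. Since $\alpha_L$ is an algebra map with $\alpha_L(y_j)=\widetilde{\{j\}}$, and since the product in the Bose-Mesner algebra coming from Lemma \ref{lem:construction} reads $\widetilde{\mu}\cdot\widetilde{\tau}=\widetilde{\mu\sqcup\tau}$ when $\mu\cap\tau=\varnothing$ and $\mu\sqcup\tau\in L$, and $0$ otherwise, an immediate induction on $|\tau|$ gives $\alpha_L(y_\tau)=\widetilde{\tau}$. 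Thus $\alpha_L$ sends the monomial basis bijectively onto the class basis $\{\widetilde{\tau}\}$ of $\K(P(L),S_L)$, and the same computation shows $\alpha_K(x_\sigma)=\widetilde{\sigma}$ for every simplex $\sigma\in K$.

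Next I would check that the square commutes as a diagram of $\K$-linear maps by evaluating both composites on the basis element $y_\tau$. Going up and then across, $\alpha_K(\varphi^*(y_\tau))=\alpha_K\bigl(\prod_{j\in\tau}\sum_{i\in\varphi^{-1}(j)}x_i\bigr)$. Expanding the product yields a sum of monomials $\prod_{j\in\tau}x_{i_j}$ with $i_j\in\varphi^{-1}(j)$; the indices $i_j$ are automatically distinct (the fibres $\varphi^{-1}(j)$ are disjoint), so no factor $x_i^2$ occurs, and such a monomial survives in $\K[K]/(x_i^2)$ precisely when $\sigma:=\{i_j\}_{j\in\tau}$ is a simplex of $K$. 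Hence the surviving terms are the $x_\sigma$ for $\sigma\in K$ mapped bijectively onto $\tau$ by $\varphi$, and applying $\alpha_K$ turns them into $\sum\widetilde{\sigma}$. Going across and then up, $P(\varphi)^*(\alpha_L(y_\tau))=P(\varphi)^*(\widetilde{\tau})=\sum_{\varphi(\sigma)=\tau}\widetilde{\sigma}$, where I use that $P(\varphi)(\widetilde{\sigma})\subset\widetilde{\varphi(\sigma)}$ (from the proof of Lemma \ref{lem:morphisms}), so the defining condition $P(\varphi)(\widetilde{\sigma})=\widetilde{\tau}$ amounts to $\varphi(\sigma)=\tau$. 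Because $\varphi$ is non-degenerate, it is injective on every simplex, so $\varphi(\sigma)=\tau$ forces $\varphi|_\sigma$ to be a bijection onto $\tau$; consequently the index set here is exactly the one obtained above, and the two composites agree on each $y_\tau$.

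Finally, the algebra-map assertion follows formally. Commutativity together with the invertibility of $\alpha_L$ gives $P(\varphi)^*=\alpha_K\circ\varphi^*\circ\alpha_L^{-1}$, a composite of algebra homomorphisms (both $\alpha_K$ and $\alpha_L$ are algebra isomorphisms by Proposition \ref{prop:B-MvsS-R}, and $\varphi^*$ is an algebra map by the discussion preceding the statement). Hence $P(\varphi)^*$ is itself a morphism of algebras, and the diagram commutes.

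The main obstacle is the combinatorial bookkeeping in the middle step: identifying the monomials of $\prod_{j\in\tau}\sum_{i\in\varphi^{-1}(j)}x_i$ that survive modulo $(x_i^2)$ with the lifts $\sigma$ of $\tau$ appearing in the definition of $P(\varphi)^*$, and verifying that this is a coefficient-preserving bijection, each lift $\sigma$ arising from a unique tuple $(i_j)_{j\in\tau}$ with no repeated indices thanks to non-degeneracy. Once $\alpha_K$ and $\alpha_L$ are seen to match the monomial and class bases, every remaining step is purely formal.
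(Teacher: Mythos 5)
Your proposal is correct and follows essentially the same route as the paper: both verify commutativity by evaluating the two composites on the squarefree monomial basis, use non-degeneracy to identify the surviving monomials of $\prod_{j\in\tau}\sum_{i\in\varphi^{-1}(j)}x_i$ with the simplices $\sigma$ satisfying $\varphi(\sigma)=\tau$, and then deduce that $P(\varphi)^*=\alpha_K\circ\varphi^*\circ\alpha_L^{-1}$ is an algebra map formally. Your explicit remark that tuples whose underlying set is not a simplex of $K$ contribute zero is a slightly cleaner bookkeeping of a step the paper leaves implicit, but the argument is the same.
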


\begin{proof}
By the definition of $\alpha_L$, it follows that 
$\alpha_L(x_{i_1}\dots x_{i_n})=\widetilde{\{{i_1}, ..., {i_n}\}}$.  Moreover, we have 
\begin{eqnarray*}
P(\varphi)^*(\widetilde{\{{i_1}, ..., {i_n}\}}) &=& 
\sum_{\varphi(\{{j_1}, ...., {j_s}\}) = \{ {i_1}, ..., {i_n}\}} \widetilde{\{{j_1}, ...., {j_s}\}}\\
&=&\sum_{\varphi(\{{j_1}, ...., {j_n}\}) = \{ {i_1}, ..., {i_n}\}} \widetilde{\{{j_1}\}}\cdots \widetilde{\{{j_n}\}}\\
&=& \sum_{(\varphi({l_1}), ..., \varphi({l_n}))= ({i_1}, ..., {i_n})} \widetilde{\{{l_1}\}}\cdots \widetilde{\{{l_n}\}}\\
&=& \sum_{{l_1} \in \varphi^{-1}({i_1}), ..., {l_n} \in \varphi^{-1}({i_n})}\widetilde{\{{l_1}\}}\cdots \widetilde{\{{l_n}\}}\\
&=& \alpha_K(\sum_{{l_1} \in \varphi^{-1}({i_1})}y_{l_1}\cdots \sum_{{l_n} \in \varphi^{-1}({i_n})}y_{l_n})\\
&=& (\alpha_K\circ \varphi^*)(x_{i_1}\dots x_{i_n}). 
\end{eqnarray*}
Since $\varphi$ is a non-degenerate simplicial map, it follows that in the equations above, $s=n$ and that 
$\varphi(\{{j_1}, ..., {j_n}\}) = \{ {i_1}, ..., {i_n}\}$ if and only if $(\varphi({l_1}), ..., \varphi({l_n}))= ({i_1}, ..., {i_n})$ 
with $\{l_1, ..., l_n \} = \{j_1, ..., j_n\}$. Then the diagram is commutative and 
hence $P(\varphi)^*$ is a morphism of algebras. 
\end{proof}

\begin{rem} Let $\mathsf{SimpComp}_{\text{non-deg}}$ be the wide category of simplicial complexes with 
non-degenerate simplicial maps as morphisms.  Then we have a functor 
$$P : \mathsf{SimpComp}_{\text{non-deg}} \to q\mathsf{ASmd}$$ which is faithful but not full. 
In fact, let $K$ be the simplicial set with a single vertex $1$ 
and $L$ the standard simplex with two points $1$ and $2$.  
Then the map $F: (P(K), S_K) \to (P(L), S_L)$  defined by 
$F(f : \phi \to \{1\})) = (\phi \to \{1, 2\})$ is a morphism of quasi-schemoids but not in 
the image of the functor $P$. 
\end{rem}

\begin{rem}
Let $(X, {\mathcal O})$ be a topological space and $\text{Open}_X$  the category of whose objects are open sets of $X$ and 
whose morphisms are inclusions. Then Lemma \ref{lem:construction} enables us to construct a schemoid $(\text{Open}_X, S_X)$. 
Moreover, we have a model category structure of $Ch(\T^{(\text{Open}_X, S_X)})$ 
and obtain a derived category of the form $\text{D}(\T^{(\text{Open}_X, S_X)})$ if $X$ is finite; see Remark \ref{rem:height_functor}. 
\end{rem}

A finite poset $X$ is considered a finite $T_0$-space. Then one might expect that the model category structure of 
$Ch(\T^{(\text{Open}_X, S_X)})$ mentioned above is of great use in the study of finite posets. The topic will be addressed 
in forthcoming work.   

Let $f : X \to Y$ be a continuous map. Then we have a functor $f^* : \text{Open}_Y \to \text{Open}_X$ defined by $f^*(U) = f^{-1}(U)$. 

\begin{lem} The functor $f^*$ mentioned above gives rise to a morphism $f^* : (\text{\em Open}_Y, S_Y) \to (\text{\em Open}_X, S_X)$ of schemoids. 
\end{lem}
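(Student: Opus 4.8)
The plan is to verify two things: first that $f^*$ is a well-defined functor between the underlying categories $\text{Open}_Y$ and $\text{Open}_X$, and then that it satisfies the single defining condition of a morphism of schemoids, namely that each block of the partition $S_Y$ is carried into a block of $S_X$. The schemoid structures here are exactly those produced by Lemma \ref{lem:construction}, so the blocks are indexed by differences of open sets, and the entire argument will hinge on the behaviour of preimages under set-theoretic difference.

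First I would record that $f^*$ is a functor. Since $f$ is continuous, $f^{-1}(U)$ is open for every open $U \subseteq Y$, so $f^*$ is well defined on objects; and if $U \subseteq V$ then $f^{-1}(U) \subseteq f^{-1}(V)$, so the inclusion $U \to V$ is sent to the inclusion $f^{-1}(U) \to f^{-1}(V)$. Preservation of identities and composites is automatic because $\text{Open}_Y$ has at most one morphism between any two objects.

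The crux is the partition condition. By Lemma \ref{lem:construction}, the block of $S_Y$ containing an inclusion $i : U \to V$ is $\widetilde{\sigma}$ with $\sigma = V\backslash U$, and likewise for $S_X$. Thus I must show that if $V_1\backslash U_1 = V_2\backslash U_2$, then $f^{-1}(V_1)\backslash f^{-1}(U_1) = f^{-1}(V_2)\backslash f^{-1}(U_2)$. This follows at once from the elementary identity $f^{-1}(V\backslash U) = f^{-1}(V)\backslash f^{-1}(U)$, valid for the preimage of any map: a point $x$ lies in the left-hand side precisely when $f(x)\in V$ and $f(x)\notin U$, that is, when $f(x)\in V\backslash U$. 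Consequently, writing $\sigma = V\backslash U$, every inclusion in a block $\widetilde{\sigma}\in S_Y$ is sent to an inclusion whose difference equals $f^{-1}(\sigma)$, so $f^*(\widetilde{\sigma}) \subseteq \widetilde{f^{-1}(\sigma)}$, and this target lies in $S_X$. This is exactly the condition required of a morphism of schemoids.

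I do not anticipate a genuine obstacle, since the whole content is that taking preimages commutes with set-theoretic difference, which makes the block index transform by $\sigma \mapsto f^{-1}(\sigma)$. The only point worth keeping in mind is that $f^{-1}(\sigma)$ need not obviously be presented as a difference of open subsets of $X$; but this is harmless, because the image $f^{-1}(U)\to f^{-1}(V)$ is an honest morphism of $\text{Open}_X$, so its difference $f^{-1}(V)\backslash f^{-1}(U)$ automatically belongs to the indexing set $D$ of Lemma \ref{lem:construction} and the target block $\widetilde{f^{-1}(\sigma)}$ is therefore well defined.
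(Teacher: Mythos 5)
Your proposal is correct and follows essentially the same route as the paper: the paper writes $V = U \sqcup \sigma$ and observes $f^{-1}(V) = f^{-1}(U) \sqcup f^{-1}(\sigma)$, which is exactly your identity $f^{-1}(V\backslash U) = f^{-1}(V)\backslash f^{-1}(U)$, giving $f^*(i) \in \widetilde{f^{-1}(\sigma)}$. Your additional checks (functoriality from continuity, well-definedness of the target block) are routine elaborations of the same argument.
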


\begin{proof} Let $i : U \to V$ be in $\widetilde{\sigma}$, where $\widetilde{\sigma} \in S_Y$. 
We have  $V = U\sqcup \sigma$ and hence $f^{-1}(V) = f^{-1}(U) \sqcup f^{-1}(\sigma)$. This implies that 
$f^*(i) \in \widetilde{f^{-1}(\sigma)}$. 
\end{proof}

\begin{rem}
The functor category $\T^{(\text{Open}_X, S_X)}$ is an abelian subcategory of the category $\T^{\text{Open}_X}$ of presheaves of $\K$-modules over $X$. We see that there is no essential intersection between $\T^{(\text{Open}_X, S_X)}$ and sheaves. In fact, 
suppose that $F$ is a sheaf in $\T^{(\text{Open}_X, S_X)}$. Then $F(\varnothing) = 0$. For any open set $U$, $id_U : U \to U$ and 
$\varnothing \to \varnothing$ are contained in a common $\sigma \in S_X$. This enables us to conclude that 
$F(id_\varnothing) = F(id_U) : F(U) = F(\varnothing) \to F(\varnothing) = F(U)$. We have $F(U) =0$. Therefore, $F$ is nothing but the constant sheaf $\underline{0}$. 
\end{rem}

\end{document}